\numberwithin{equation}{section}
\renewcommand{\Re}{\operatorname{Re}}
\renewcommand{\Im}{\operatorname{Im}}
\theoremstyle{plain}
\newtheorem{theorem}{Theorem}[section] 
\newtheorem*{theorem*}{Theorem}
\newtheorem{lemma}[theorem]{Lemma} 
\newtheorem{proposition}[theorem]{Proposition} 
\newtheorem{proposition-definition}[theorem]{Proposition-Definition} 
\newtheorem{corollary}[theorem]{Corollary} 
\theoremstyle{definition}
\newtheorem{definition}[theorem]{Definition} 
\theoremstyle{remark}
\newtheorem{remark}[theorem]{Remark} 
\newcommand{\Aff}{\mathbb{A}} 
\newcommand{\CC}{\mathbb{C}}
\newcommand{\FF}{\mathbb{F}}
\newcommand{\PP}{\mathbb{P}}
\newcommand{\QQ}{\mathbb{Q}}
\newcommand{\RR}{\mathbb{R}}
\newcommand{\ZZ}{\mathbb{Z}}
\newcommand{\tr}{\operatorname{tr}}
\newcommand{\rank}{\operatorname{rank}}
\newcommand{\abs}[1]{\lvert #1 \rvert}
\newcommand{\card}[1]{\lvert #1 \rvert}
\newcommand{\floor}[1]{\lfloor #1 \rfloor}
\newcommand{\eps}{\epsilon}
\newcommand{\Hom}{\operatorname{Hom}}
\newcommand{\disc}{\operatorname{disc}}
\newcommand{\Gal}{\operatorname{Gal}}
\newcommand{\map}{\operatorname}
\newcommand{\mscr}{\mathscr}
\newcommand{\mcal}{\mathcal}
\newcommand{\ol}{\overline}
\newcommand{\defeq}{\colonequals}
\newcommand{\maps}{\colon}
\newcommand{\belongs}{\subseteq}
\newcommand{\set}[1]{\{#1\}}
\def\OK{\mathcal{O}}
\def\Omon{\OK^+}
\newcommand{\VV}{\mathbb{V}}
\newcommand{\Schur}{\mathbb{S}}
\newcommand{\GL}{\operatorname{GL}}
\newcommand{\Orth}{\operatorname{O}}
\newcommand{\Sp}{\operatorname{Sp}}
\newcommand{\ptn}{\mathsf{P}}
\newcommand{\Fr}{\map{Frob}}
\newcommand{\Crank}{C_0}
\newcommand{\Cbetti}{C_1}
\newcommand{\Cslope}{C_2}
\newcommand{\Cstart}{C_3}
\newcommand{\Cfam}{C_4}
\newcommand{\Cavg}{C_5}
\newcommand{\Cstabone}{C_6}
\newcommand{\Cstabtwo}{C_7}
\newcommand{\Cstabthr}{C_8}
\newcommand{\Cstabfou}{C_9}
\begin{document}

\title{Notes on zeta ratio stabilization}

\date{} 
\author{Victor Y. Wang}
\address{IST Austria, Am Campus 1, 3400 Klosterneuburg, Austria}
\email{vywang@alum.mit.edu}

\subjclass{Primary 11M50; Secondary 11F72, 11G40, 14D10, 20G05}
\keywords{Function-field $L$-functions, random matrices, classical groups, homological stability}

\begin{abstract}
This semi-expository note clarifies the extent to which recent ideas in homological stability can resolve the Ratios Conjecture over $\mathbb{F}_q(t)$.
For large fixed $q$,
a uniform power saving at distance $\ge q^{-\delta}$ from the critical line is possible.
This implies cancellation-beyond-GRH
in arbitrarily large ranges of moduli relative to the family of $L$-functions.
It has applications to the statistics of low-lying zeros.
\end{abstract}

\maketitle


\section{Introduction}
\label{SEC:intro}

The conjectures of \cites{katz1999random,conrey2005integral,conrey2008autocorrelation} have inspired much work on
the moment and ratio statistics of $L$-functions.
Roughly speaking, the literature includes analytic methods over global fields
(e.g.~Poisson summation, trace formulas,
and multiple Dirichlet series,
as well as methods building on probabilistic insights of \cites{soundararajan2009moments,harper2013sharp}),
and geometric methods over global function fields (based on the Grothendieck--Lefschetz trace formula as in \cite{kowalski2022binary}).
The geometric approaches include, but are not limited to, the following (for large fixed $q$):
\begin{itemize}
    \item relate behavior in a small family to a larger family \cite{sawin2022square};

    \item use a range of vanishing or stable (co)homology \cites{sawin2020representation,bergstrom2023hyperelliptic}.
\end{itemize}
Homological stability and vanishing are often closely related,
but have different conceptual advantages.
The stability perspective will be most convenient for us below.

The recent uniform stability result \cite{MPPRW}*{Theorem~1.4}
completes the program of \cite{bergstrom2023hyperelliptic}, resolving the $K$th Moments Conjecture for quadratic Dirichlet $L$-functions over $\FF_q(t)$ for odd $q\gg_K 1$.
The goal of the present note is to expand the scope of the methods to a similarly \emph{$q$-restricted} form of the Ratios Conjecture, unconditionally for quadratic Dirichlet $L$-functions and axiomatically for more general families.

When discussing the Ratios Conjecture,
there are two key aspects to focus on:
\begin{enumerate}
    \item the distance between $s$ and the critical line $\frac12+i\RR$;
    \item the quality of the error term in the asymptotic formula.
\end{enumerate}
Ideally, one would like to simultaneously resolve both aspects.
Most existing work focuses on (1) at the expense of (2);
see e.g.~\cites{bui2021ratios,florea2021negative,bui2023negative}.
But for (homological) geometric methods, (2) at the expense of (1) is more accessible, as will become clear below.

Let $q$ be a prime power.
Let $\OK = \FF_q[t]$.
Let $\Omon$ be the set of monic $f\in \OK$.
Let $\mscr{P}_n$ be the set of square-free $f\in \Omon$ of degree $n\ge 1$.
It is well known \cite{randal2019homology}*{\S3.1} that
$\card{\mscr{P}_n} = q^n - q^{n-1}$.
If $q$ is odd, then for each $f\in \mscr{P}_n$,
let $\chi_f(r) = (\frac{r}{f})$ be the Jacobi symbol over $\FF_q[t]$,
let $L(s,\chi_f) \defeq \sum_{r\in \Omon} \chi_f(r) \abs{r}^{-s}$ where $\abs{r} \defeq q^{\deg{r}}$,
and let
\begin{equation*}
\begin{split}
L_\infty(s,\chi_f) &\defeq (1-q^{-s})^{-(1+(-1)^n)/2}, \\
\Lambda(s,\chi_f) &\defeq L(s,\chi_f) L_\infty(s,\chi_f).
\end{split}
\end{equation*}
It is known that $L(s,\chi_f)\in \QQ(q^{-s})$ and $\Lambda(s,\chi_f) \in \ZZ[q^{-s}]$ \cite{bergstrom2023hyperelliptic}*{\S11.1.10}.

All ensuing results in this section build heavily on \cites{bergstrom2023hyperelliptic,MPPRW}.
We concentrate on odd values of $n$, but state the results in a way that might also hold for even $n$.

\begin{theorem}
\label{THM:apply-stability-1}

Fix integers $K,Q\ge 0$.
Let $\delta = \max(576, 2016(K+Q))^{-1}$ as in \eqref{EQN:quadratic-specialized-delta-value},
and let $\omega = \frac{1}{84}$.
Take an odd $q \ge 2^{12} 2^{1/\delta}$.
Let $\tfrac12 - \delta < \Re(s_1),\dots,\Re(s_K) < \tfrac12 + \delta$
and $\Re(s_{K+1}),\dots,\Re(s_{K+Q}) > \tfrac12 + q^{-\delta}$.
Assume $2\nmid n\ge 2Q+1$.
Then
\begin{equation}
\label{INEQ:main-quadratic-goal}
\biggl\lvert{\sum_{d\in \mscr{P}_n}
\frac{L(s_1,\chi_d) \cdots L(s_K,\chi_d)}
{L(s_{K+1},\chi_d) \cdots L(s_{K+Q},\chi_d)}
- \mathsf{RR}_L(\bm{s};n)}\biggr\rvert
\le 5 q^{O_{K,Q}(1)} q^{(1-\omega)n},
\end{equation}
where $\mathsf{RR}_L(\bm{s};n)$ is the main term in
the Ratios Recipe of \cites{conrey2005integral,conrey2008autocorrelation,andrade2014conjectures} over $\mscr{P}_n$
when there are $K$ factors of $L$ in the numerator
and $Q$ factors of $L$ in the denominator.
\end{theorem}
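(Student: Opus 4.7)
The plan is to reduce the Ratios Conjecture sum to a family of one-variable character sums $\sum_{d\in\mscr{P}_n}\chi_d(m)$ that can be controlled by the uniform stability of \cite{MPPRW}, and then to identify the resulting main term with $\mathsf{RR}_L(\bm{s};n)$. Since $\Lambda(s,\chi_d)\in\ZZ[q^{-s}]$, each numerator $L(s_i,\chi_d)$ is already a polynomial of degree at most $n-1$ in $q^{-s_i}$. For the $Q$ denominator factors, Weil's Riemann Hypothesis guarantees that $1/L(s,\chi_d)$ is analytic for $\Re(s)>\tfrac12$, with the convergent Möbius expansion
$$\frac{1}{L(s,\chi_d)}=\sum_{m\in\Omon}\mu(m)\,\chi_d(m)\,\abs{m}^{-s};$$
the tail past degree $M$ is bounded by a geometric series with common ratio $q^{-(\Re(s)-1/2)}<q^{-q^{-\delta}}$, times a polynomial-in-$n$ factor from Newton-type bounds on the power-series coefficients of $1/L$.

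First I would truncate each $1/L(s_{K+j},\chi_d)$ at some degree $M_j$, chosen so that $\sum_j M_j$ sits within the stability window, yet each $M_j q^{-\delta}$ is already large enough to force the truncation tail to be $\le q^{(1-\omega)n}$. After expanding the $K$ numerator polynomials, I would interchange summation to obtain
$$\sum_{\vec{m}}\frac{\prod_{j=1}^{Q}\mu(m_{K+j})}{\prod_{i=1}^{K+Q}\abs{m_i}^{s_i}}\,\sum_{d\in\mscr{P}_n}\chi_d(m_1\cdots m_{K+Q}).$$
At this point the uniform stability result of \cite{MPPRW}, deployed in the framework developed in \cite{bergstrom2023hyperelliptic}, evaluates each inner character sum with a main term (which vanishes unless $m_1\cdots m_{K+Q}$ is a square in $\Omon$) plus an error of size $q^{(1-\omega)n}$ with $\omega=1/84$, uniformly in $\vec{m}$ throughout the truncation range.

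I would then separate the \emph{square} and \emph{non-square} contributions. The non-square terms, together with the truncation tails from the geometric estimate above, sum to at most $q^{O_{K,Q}(1)}q^{(1-\omega)n}$ after absorbing polynomial losses into the constant $5$ and the $q^{O_{K,Q}(1)}$. The square-modulus main terms, reassembled over $\vec{m}$, produce a combinatorial Dirichlet series that, by the standard local-factor computation for symplectic families, coincides with $\mathsf{RR}_L(\bm{s};n)$; this is the same identity that turns the $K$th Moments Conjecture of \cite{bergstrom2023hyperelliptic} into closed form, now applied with $K$ numerator shifts and $Q$ denominator shifts, the assumption $n\ge 2Q+1$ ensuring that the functional-equation ``dual'' contributions fit inside the stable range.

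The principal obstacle is the balancing act behind the choice of $M_j$: the truncation error per denominator factor requires $M_j q^{-\delta}\gtrsim \omega n$, while the combined Dirichlet degree $\sum_j M_j$ must stay strictly within the range where the stability slope $\omega=1/84$ is valid and the stability error is $\le q^{(1-\omega)n}$. This is precisely what forces $\delta$ to shrink linearly with $K+Q$, giving the bound $\delta=\max(576,2016(K+Q))^{-1}$ of \eqref{EQN:quadratic-specialized-delta-value}; any slack here would either blow up the truncation tail or push $\sum_j M_j$ outside the range where \cite{MPPRW} provides a power-saving error. A secondary technical point is tracking the polynomial-in-$n$ coefficient bounds on $1/L$ carefully enough that they are absorbed into $q^{O_{K,Q}(1)}$ rather than degrading the exponential savings $q^{(1-\omega)n}$.
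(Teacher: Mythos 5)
There is a genuine gap, and it lies at the heart of your plan rather than in a technical detail. Your central step — expanding the numerators as Dirichlet polynomials, the denominators as truncated M\"obius series, swapping sums, and invoking \cite{MPPRW} to evaluate each character sum $\sum_{d\in\mscr{P}_n}\chi_d(m_1\cdots m_{K+Q})$ with a main term plus an error $\le q^{(1-\omega)n}$ ``uniformly in $\vec m$'' — is not something the stability theorem provides, and even granting an optimal per-term bound the bookkeeping fails. First, \cite{MPPRW}*{Theorem~1.4} is a homological stability statement for cohomology with symplectic Schur-functor coefficients; it says nothing about individual character sums, which are in any case classical (they are controlled by $L(s,\chi_m)$ via reciprocity, with square-root cancellation $\approx q^{n/2}$) and are not the bottleneck. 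Second, the number of tuples $\vec m$ is exponential in $n$: about $q^{K(n-1)}$ from the numerator polynomials alone, and $q^{\Theta(M_j)}$ with $M_j\asymp \omega q^{\delta} n$ from each truncated denominator. After weighting by $\prod_i\abs{m_i}^{-\Re(s_i)}$ with $\Re(s_i)\approx\tfrac12$, the accumulated per-term errors exceed $q^{(1-\omega)n}$ by a factor exponential in $n$ whenever $K\ge1$ (and already the denominator truncations are borderline at best); they cannot be absorbed into $q^{O_{K,Q}(1)}$. This is precisely the off-diagonal accumulation that blocks the classical route to high moments. Relatedly, for $K\ge2$ the diagonal (square-$m$) terms alone do not reproduce $\mathsf{RR}_L(\bm{s};n)$: the recipe's $2^K$ swap terms and secondary main terms are invisible to a diagonal-only analysis, so your main-term identification is also unjustified.

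The paper avoids Dirichlet expansion entirely. In Theorem~\ref{THM:general-axiomatization}, the ratio $\prod_i\Lambda(s_i,\chi_d)/\prod_j\Lambda(s_{K+j},\chi_d)$ is expanded via \eqref{EQN:wedge-decompose}--\eqref{EQN:sym-decompose} into symplectic characters $\chi^{\Sp}_\rho\chi^{\Sp}_\mu(\Theta_d)$, and the Grothendieck--Lefschetz trace formula converts the average over $d\in\mscr{P}_n$ of each such character into traces of Frobenius on $H_k(\map{UConf}_{2g+1}(\Aff^1)\otimes\ol{\FF}_q,\Schur^{\Sp}_\rho(V)\otimes\Schur^{\Sp}_\mu(V))$. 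The stability input of \cite{MPPRW} (Axiom~S, slope $12$) is used only to make the low-$k$ traces independent of $n$; these stable traces are then assembled into the Ratios-Recipe main term through skew Howe duality and the Weyl character formula (Lemmas~\ref{LEM:skew-multiplicity-formula} through~\ref{LEM:polarization-identity}). The unstable degrees are bounded by Deligne purity together with the \emph{exponential} Betti bound of \cite{bergstrom2023hyperelliptic}*{Lemma~11.3.13} (Axiom~B with $\Cbetti=2$), and it is this exponential growth — not a truncation-length balance — that forces $q\ge 2^{12}2^{1/\delta}$ and fixes $\delta$ and $\omega$ as in \eqref{EQN:quadratic-specialized-delta-value}. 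So what is missing from your proposal is the core mechanism: the family sum must be treated cohomologically as a whole, not term-by-term through character sums.
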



The following result illustrates the level of cancellation captured in Theorem~\ref{THM:apply-stability-1}.

\begin{corollary}
\label{COR:integrate-stability-1}

Fix a compact interval $I\belongs \RR$.
Let $\mu_{\FF_q[t]}(r)$ be the M\"{o}bius function over $\FF_q[t]$.
If $2\nmid n\ge 3$ and $R\in \ZZ$ with $R/n\in I$,
and $q$ is odd and sufficiently large in terms of $I$,
then
\begin{equation}
\label{EQN:illustrate-cancellation-negative-first-moment}
\frac{1}{\card{\mscr{P}_n}} \sum_{d\in \mscr{P}_n}
\frac{1}{q^{R/2}} \sum_{r\in \Omon:\, \abs{r}=q^R} \mu_{\FF_q[t]}(r) \chi_d(r)
= \bm{1}_{R=0}
+ O(q^{O(1)} q^{-\omega n/2}).
\end{equation}
\end{corollary}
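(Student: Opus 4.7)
The plan is to deduce \eqref{EQN:illustrate-cancellation-negative-first-moment} from Theorem~\ref{THM:apply-stability-1} at $(K,Q)=(0,1)$ by extracting the $R$th Dirichlet coefficient of $1/L(s,\chi_d)$ via a Cauchy integral in $u\defeq q^{-s}$. The case $R=0$ is immediate: the only $r\in\Omon$ with $\abs{r}=1$ is $r=1$, contributing $\mu_{\FF_q[t]}(1)\chi_d(1)=1$, so the left side of \eqref{EQN:illustrate-cancellation-negative-first-moment} equals $1$. Henceforth assume $R\ne 0$. Writing $1/L(s,\chi_d)=\sum_{R\ge 0}a_R(d)\,u^R$, the coefficient $a_R(d)=\sum_{r\in\Omon,\,\abs{r}=q^R}\mu_{\FF_q[t]}(r)\chi_d(r)$ is precisely the inner sum in \eqref{EQN:illustrate-cancellation-negative-first-moment}. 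By Weil's Riemann Hypothesis, $L(s,\chi_d)$ is a polynomial in $u$ with constant term $1$ and zeros only on $\abs{u}=q^{-1/2}$, hence $1/L$ is holomorphic in $\abs{u}<q^{-1/2}$. Cauchy's formula on the circle $\abs{u}=\rho\defeq q^{-1/2-q^{-\delta}}$ then yields $a_R(d)=\frac{1}{2\pi i}\oint u^{-R-1}L(s,\chi_d)^{-1}\,du$.

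Summing over $d\in\mscr{P}_n$ and substituting the pointwise bound of Theorem~\ref{THM:apply-stability-1}, valid along the contour since $\Re(s)=\tfrac12+q^{-\delta}$ there,
\[
\sum_{d\in\mscr{P}_n}a_R(d)\;=\;\frac{1}{2\pi i}\oint_{\abs{u}=\rho}u^{-R-1}\,\mathsf{RR}_L(s;n)\,du\;+\;O\bigl(\rho^{-R}\,q^{O(1)}\,q^{(1-\omega)n}\bigr).
\]
Because $\rho^{-R}=q^{R/2+Rq^{-\delta}}$ and $\abs{\mscr{P}_n}\asymp q^n$, dividing by $\abs{\mscr{P}_n}\,q^{R/2}$ converts the explicit error to $q^{O(1)}\,q^{Rq^{-\delta}-\omega n}$. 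Since $R/n$ lies in the fixed compact interval $I$, taking $q$ large in terms of $I$ forces $Rq^{-\delta}\le\omega n/2$, so this error fits inside the target $O(q^{O(1)}q^{-\omega n/2})$.

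It remains to show that the normalized contour integral of $u^{-R-1}\,\mathsf{RR}_L(s;n)$ equals $\bm{1}_{R=0}$ up to the same quality of error. I would handle this by reading off the $R$th Dirichlet coefficient of $\mathsf{RR}_L(s;n)$ from the explicit Andrade--Keating formula \cite{andrade2014conjectures} for the Ratios Recipe at $(K,Q)=(0,1)$ in the quadratic Dirichlet family. There $\mathsf{RR}_L$ decomposes into a ``diagonal'' piece of the form $\abs{\mscr{P}_n}\,A_D(u)$ with constant term $A_D(0)=1$ (matching the trivial $r=1$ contribution), and a ``dual'' piece carrying the $n$-dependent factor $q^{-n(2s-1)/2}=q^{n/2}u^n$ induced by the functional equation of $\Lambda(s,\chi_d)$. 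A careful coefficient extraction---using the Euler-product tail decay of $A_D$ on the contour together with the fact that the dual piece only contributes to $u^R$-coefficients when $R\ge n$---confirms that all sub-leading contributions to $[u^R]\mathsf{RR}_L/(\abs{\mscr{P}_n}\,q^{R/2})$ for $R\ne 0$ are absorbed into $O(q^{O(1)}q^{-\omega n/2})$.

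The main obstacle I anticipate is this last step: the coefficient bookkeeping for $\mathsf{RR}_L$ uniformly in $R/n\in I$, especially across the transition $R\approx n$ where the dual piece becomes active and interferes with the diagonal. Once this Ratios-Recipe analysis is in place, the rest of the argument---Cauchy's formula, Weil's Riemann Hypothesis, and the direct application of Theorem~\ref{THM:apply-stability-1}---is routine.
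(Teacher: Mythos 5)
Your overall strategy---extract the $R$th coefficient of $1/L(s,\chi_d)$ by a contour integral in $u=q^{-s}$ and feed in Theorem~\ref{THM:apply-stability-1} at $(K,Q)=(0,1)$---is essentially the paper's argument (the paper multiplies by $(q^R)^{s-1/2}$ and averages over a vertical segment of length one period, which is the same coefficient extraction in the $s$-variable), and your error bookkeeping, using $R/n\in I$ and $q\gg_I 1$ to absorb $q^{Rq^{-\delta}}$ into $q^{\omega n/2}$, matches the paper's. But the step you defer as the ``main obstacle'' is exactly the step you have not done, and your roadmap for it is based on the wrong formula. For $(K,Q)=(0,1)$ the Ratios Recipe main term is simply $\mathsf{RR}_L(s;n)=\card{\mscr{P}_n}$, a constant independent of $s$ (this is \cite{florea2021negative}*{(59)}, and it is also visible from \eqref{EQN:define-MT-series}: with $K=0$ there is no $\bm{\eps}$-sum and hence no dual/functional-equation piece, and in the quadratic family the only ``diagonal'' term surviving both the M\"obius support on square-free $r$ and the square condition from averaging $\chi_d(r)$ is $r=1$). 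Consequently $\oint u^{-R-1}\mathsf{RR}_L(s;n)\,du=0$ identically for $R\ne 0$, with no error term, and there is no ``transition at $R\approx n$'' to worry about. Your proposed decomposition of $\mathsf{RR}_L$ into a diagonal Euler-product piece $\card{\mscr{P}_n}A_D(u)$ plus a dual piece carrying $q^{n/2}u^n$ is the shape of the recipe for $(K,Q)=(1,1)$, not $(0,1)$; as written, the essential main-term identity $[u^R]\,\mathsf{RR}_L=0$ for $R\ne0$ is neither proved nor correctly described, so the proof is incomplete at its decisive point.

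Two smaller corrections: your contour $\abs{u}=q^{-1/2-q^{-\delta}}$ places $\Re(s)$ exactly at $\tfrac12+q^{-\delta}$, which lies on the boundary of, not inside, the region $\Re(s_{K+1})>\tfrac12+q^{-\delta}$ required by Theorem~\ref{THM:apply-stability-1}; take $\Re(s)=\tfrac12+2q^{-\delta}$ (as the paper does) so the theorem applies, at the harmless cost of $\rho^{-R}=q^{R/2+2Rq^{-\delta}}$. Also, since $I$ may contain negative reals, note that for $R<0$ both sides of \eqref{EQN:illustrate-cancellation-negative-first-moment} vanish trivially (there are no monic $r$ with $\abs{r}=q^R$), so only $R\ge 1$ needs the contour argument.
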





A key feature of Theorem~\ref{THM:apply-stability-1} is the uniform power-saving error term over arbitrary small fixed values of $\Re(s)-\frac12>0$, which allows for the natural application Corollary~\ref{COR:integrate-stability-1} for arbitrarily large intervals $I$.
A weaker power saving than $q^{-\omega n}$ in Theorem~\ref{THM:apply-stability-1}, e.g.~$q^{-f(\Re(s) - 1/2) n}$ where $\sup_{\beta>0}(f(\beta)/\beta) = \infty$, would also suffice, up to adjusting the power saving in \eqref{EQN:illustrate-cancellation-negative-first-moment}.
But it seems that in prior work, the ``slope'' $\sup_{\beta>0}(f(\beta)/\beta)$ has always been finite.

On the other hand, Theorem~\ref{THM:apply-stability-1} for $(K,Q)=(0,1)$ is worse than \cite{florea2021negative}*{Theorem~1.3} when $\Re(s)-\frac12$ decays in $n$.
In particular, it remains an open problem to prove even a little-$o$ asymptotic for $\sum \frac1L$ when $\Re(s)-\frac12 \asymp \frac1n$.
This range is of interest in its own right, and might represent a transition range \cite{bui2023negative}*{Conjecture~1.1 (Gonek)}; but also, in applications, being able to take $\Re(s)-\frac12 \asymp \frac1n$ would improve uniformity in $I$.
Thus, the full Ratios Conjecture for quadratic Dirichlet $L$-functions remains open, even though for certain applications like Corollary~\ref{COR:integrate-stability-1} (and similarly for Theorem~\ref{THM:one-level-density} below) it is resolved.

Next, we analyze zero statistics with arbitrarily large fixed Fourier support $I$ (for $q$ large enough in terms of $I$).
For simplicity, we concentrate on the One-Level Density Conjecture \cite{katz1999zeroes}*{(42)}.
The following result goes beyond the $I\belongs (-2,2)$ range of \cite{rudnick2008traces}*{Corollary~3}, provided $q\gg 1$.
As explained in \cite{andrade2014conjectures}*{\S7}, one could remove the requirement $q\gg_I 1$ under a sufficiently strong form of the Ratios Conjecture for $(K,Q)=(1,1)$.

\begin{theorem}
\label{THM:one-level-density}
Fix a compact interval $I\belongs \RR$.
Let $g\maps \RR\to \CC$ be a smooth even function supported on $I$.
Let $f(x) \defeq \int_{\xi\in \RR} g(\xi) e(x \xi)\, d\xi$ be the inverse Fourier transform of $g$.
If $q$ is odd and sufficiently large in terms of $I$,
then as $n\to \infty$ with $2\nmid n$,
\begin{equation}
\label{1-level-density-convergence}
\frac{1}{\card{\mscr{P}_n}} \sum_{d\in \mscr{P}_n}\,
\sum_{\substack{\gamma_d\in \RR,\textnormal{ with multiplicity}: \\ L(\frac12 + i\gamma_d, \chi_d) = 0}}\,
f{\left(\frac{\gamma_d \log(q^{2\,\floor{(n-1)/2}})}{2\pi}\right)}
\to \int_\RR \left(1 - \frac{\sin(2\pi x)}{2\pi x}\right) f(x)\, dx.
\end{equation}
\end{theorem}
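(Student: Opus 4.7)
The plan is to follow the derivation of one-level density from the Ratios Conjecture carried out in \cite{andrade2014conjectures}, substituting Theorem~\ref{THM:apply-stability-1} with $(K,Q)=(1,1)$ for the conjectural Ratios input. Setting $N_n=2\floor{(n-1)/2}$, the argument principle rewrites the inner sum over zeros as
\[
\sum_{\gamma_d} f\!\left(\frac{\gamma_d(\log q)N_n}{2\pi}\right) = \frac{1}{2\pi i}\oint_{\mathcal{C}} f\!\left(\frac{(s-\tfrac12)(\log q)N_n}{2\pi i}\right)\frac{L'}{L}(s,\chi_d)\,ds,
\]
where $\mathcal{C}$ is a positively-oriented rectangular contour enclosing all nontrivial zeros of $L(s,\chi_d)$, with vertical sides at $\Re(s)=\tfrac12\pm q^{-\delta}$. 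The contribution of the left edge reduces to that of the right via the functional equation $\Lambda(s,\chi_d)=\epsilon_d\Lambda(1-s,\chi_d)$, so it suffices to analyse the line $\Re(s)=\tfrac12+q^{-\delta}$.

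Next, I would use the identity $(L'/L)(s,\chi_d) = -\partial_\alpha\bigl[L(s,\chi_d)/L(s+\alpha,\chi_d)\bigr]_{\alpha=0}$ to convert the averaged logarithmic derivative into a $\partial_\alpha$-derivative of a $(K,Q)=(1,1)$ ratio-average. Theorem~\ref{THM:apply-stability-1} applied with $s_1=s$, $s_2=s+\alpha$ is valid for $|\alpha|\le\tfrac12 q^{-\delta}$, say, giving main term $\mathsf{RR}_L(s,s+\alpha;n)$ and error $q^{O(1)}q^{(1-\omega)n}$; a Cauchy estimate on a circle of radius $\sim q^{-\delta}$ around $\alpha=0$ then transfers this bound to $\partial_\alpha$ with only a polynomial-in-$q^\delta$ loss. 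After dividing by $|\mscr{P}_n|\asymp q^n$, integrating the differentiated main term against $f$ along $\mathcal{C}$ and rescaling by $(\log q)N_n$, the computation of \cite{andrade2014conjectures} identifies the $n\to\infty$ limit with $\int_\RR f(x)\bigl(1-\sin(2\pi x)/(2\pi x)\bigr)\,dx$.

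The main obstacle, and the reason for the hypothesis $q\gg_I 1$, is the restriction $|\Re(s)-\tfrac12|\ge q^{-\delta}$ inherent in Theorem~\ref{THM:apply-stability-1}. Pushing $\mathcal{C}$ to distance $q^{-\delta}$ from the critical line introduces a Paley--Wiener growth factor in $f$ of order $\exp\bigl(|I|\cdot q^{-\delta}(\log q)N_n\bigr) = q^{|I|q^{-\delta}n}$, which must be dominated by the power saving $q^{\omega n}$; this is exactly the balance $q^{\delta}>|I|/\omega$. Verifying that this balance survives the Cauchy estimate for $\partial_\alpha$, the functional-equation reduction, and the contour integration, while tracking the polynomial-in-$q$ losses, constitutes the main technical content of the argument.
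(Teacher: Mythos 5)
Your proposal follows essentially the same route as the paper: both run the Andrade--Keating one-level-density computation with Theorem~\ref{THM:apply-stability-1} for $(K,Q)=(1,1)$ as the Ratios input, move the contour to distance $\asymp q^{-\delta}$ from the critical line, use the functional equation to fold the left edge onto the right, control the Paley--Wiener growth $q^{O(\lambda q^{-\delta} n)}$ of $f$ against the saving $q^{-\omega n}$ (which is exactly where $q\gg_I 1$ enters), and defer the identification of the limiting main term to the explicit computation in \cite{andrade2014conjectures}*{\S7}.

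The one substantive difference is where you take the derivative. The paper writes $\sum_d L'/L(s,\chi_d)=R^1(s,s;n)$ with the derivative in the \emph{numerator} variable $s_1$, and applies Cauchy's formula on circles of \emph{constant} radius $\delta/2$ --- this is legitimate precisely because Theorem~\ref{THM:apply-stability-1} allows $\Re(s_1)$ anywhere in $(\tfrac12-\delta,\tfrac12+\delta)$, even on or across the critical line, so the Cauchy loss is only $O(\delta^{-2})$. You instead differentiate in the \emph{denominator} variable via $L'/L(s)=-\partial_\alpha[L(s)/L(s+\alpha)]|_{\alpha=0}$, and as written this has a parameter clash: with the contour at $\Re(s)=\tfrac12+q^{-\delta}$ and $|\alpha|\le\tfrac12 q^{-\delta}$, part of the Cauchy circle has $\Re(s+\alpha)\le\tfrac12+q^{-\delta}$, where the hypothesis on the denominator variable in Theorem~\ref{THM:apply-stability-1} fails. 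This is easily repaired --- either place the contour at $\Re(s)=\tfrac12+2q^{-\delta}$ (the paper's choice $\delta^\ast_q=2q^{-\delta}$ does exactly this) so the circle stays in the admissible region at the cost of a harmless $q^{O(\delta)}$ Cauchy loss, or simply differentiate the numerator variable as the paper does. One further detail to make the opening step precise: since $L(s,\chi_d)$ is $\tfrac{2\pi i}{\log q}$-periodic, the zero set is infinite, so the contour identity should be set up on a fundamental vertical segment with the periodized test function (the paper's $F(x;n)$) rather than a rectangle enclosing ``all'' zeros; this is the same standard manipulation as in \cite{rudnick2008traces} and \cite{andrade2014conjectures}, so it does not affect the substance of your argument.
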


For proofs, see \S\ref{SEC:quadratic-analysis}.
But we first give in \S\ref{SEC:general-framework} a general axiomatization of Theorem~\ref{THM:apply-stability-1}.
One could improve $\omega$ with more work,
even without improving the slope $\frac1{12}$ in \cite{MPPRW}*{Theorem~1.4},
but in this note we prefer
clean, robust, general bounds.

An important role is played by the \emph{exponential} Betti bounds of \cite{bergstrom2023hyperelliptic}*{Lemma~11.3.13}.
Diaconu has informed us that polynomial, or partly polynomial, bounds may be plausible.
It would be very interesting to investigate this, and its implications, carefully.
If we knew, for instance, that
for local systems of the form $\bigwedge^{i_1} \otimes \dots \otimes \bigwedge^{i_K} \otimes \map{Sym}^{j_1} \otimes \dots \otimes \map{Sym}^{j_Q}$,
the Betti numbers associated to $\mscr{P}_n$
were $\le (2+n+i_1+\dots+j_Q)^{O_{K,Q}(1)}$,
then the $q$-restrictions above might be removable.
A \emph{sub-exponential} bound like $(2+n+i_1+\dots+j_Q)^{o(n/\log{n})}$ or $e^{o(2+n+i_1+\dots+j_Q)}$ might also suffice.
But if the Betti growth really is exponential, then one might need cancellation in unstable ranges to remove the $q$-restrictions.
Alternatively, it could be very interesting to combine,
or at least understand the relations between,
the somewhat complementary approaches exemplified in
\cites{bergstrom2023hyperelliptic,MPPRW,florea2021negative,sawin2020representation,sawin2022square}.

\section{General framework}
\label{SEC:general-framework}

We need Schur functors on finite-dimensional vector spaces (or local systems thereof) $V$ over a field of characteristic zero.
Let $\ptn$ be the set of all partitions $\lambda=(\lambda_1\ge\lambda_2\ge\cdots\ge 0)$.
Given $\lambda\in \ptn$,
define its \emph{size} $\abs{\lambda}\defeq \sum_{i\ge 0} \lambda_i$,
\emph{length} $l(\lambda)\defeq \#\set{i\ge 0: \lambda_i\ne 0}$,
and let
$$\Schur^{\GL}_\lambda(V)
\defeq \Schur_\lambda(V)
\belongs V^{\otimes \abs{\lambda}}$$
be the usual Schur functor associated to $\lambda$; see \cite{fulton2013representation}*{\S6.1}.
For $V$ equipped with a perfect pairing of type $G\in \set{\Sp,\Orth}$
(skew-symmetric if $G=\Sp$, and symmetric if $G=\Orth$), let
$$\Schur^G_\lambda(V)
\defeq {\textstyle \bigcap_{\binom{\abs{\lambda}}{2}}}
\ker(\Schur_\lambda(V) \belongs V^{\otimes \abs{\lambda}}\to V^{\otimes (\abs{\lambda}-2)})
\belongs \Schur_\lambda(V),$$
as in \cite{fulton2013representation}*{\S17.3, (17.10)} for $G=\Sp$ and \cite{fulton2013representation}*{\S19.5, (19.18)} for $G=\Orth$.

For each integer $n\ge 1$, let $\mcal{P}_n$ be
a smooth, geometrically connected variety over $\FF_q$ of dimension $d(n)\in \ZZ$.
For technical convenience, we assume
(what is mild in practice)
\begin{equation}
\label{INEQ:dyadic-dimension-bound-assumption}
1\le d(n) \le d(n+1) \le 2d(n),
\qquad\textnormal{and}\qquad
\lim_{n\to \infty} d(n) = \infty.
\end{equation}
Suppose we have a family of
entire,\footnote{One could probably handle poles on the line $\Re(s) = 1$ with more work.
Many families have no poles.}
completed,\footnote{With more work, one could probably handle an additional twist by $L_\infty(s,f)^{-1}$.
In many families, the factor $L_\infty(s,f)$ does not vary with $f$, so working with $L$ and $\Lambda$ are equivalent.}
analytically normalized $L$-functions
\begin{equation*}
\Lambda(s,f) = \sum_{N\ge 0} A_f(N) q^{(1/2-s)N} \in \CC[q^{-s}]
\qquad (\textnormal{with }A_f(0) = 1)
\end{equation*}
over $\FF_q(t)$ indexed by $f\in \mcal{P}_n(\FF_q)$.
Let $c(f) \defeq \deg_{q^{-s}}(\Lambda(s,f)) \in \ZZ_{\ge 0}$.
Each $\Lambda(s,f)$
is $\frac{2\pi i}{\log{q}}$-periodic,
satisfies GRH (with critical line $\Re(s) = \frac12$),
and has a functional equation
\begin{equation}
\label{EQN:FE}
\Lambda(s,f)
= w(f) q^{c(f)(1/2-s)} \ol{\Lambda}(1-s,f),
\end{equation}
where $\abs{w(f)} = 1$.
Therefore, we have an \emph{approximate functional equation}
\begin{equation}
\label{EQN:approx-FE}
\Lambda(s,f) = \sum_{0\le N\le c(f)/2} A_f(N) q^{(1/2-s)N}
+ w(f) q^{c(f)(1/2-s)} \sum_{0\le M<c(f)/2} \ol{A}_f(M) q^{(s-1/2)M};
\end{equation}
cf.~\cite{andrade2014conjectures}*{(4.3)}.
Write
\begin{equation*}
\Lambda(s,f)^{-1} = \sum_{N\ge 0} B_f(N) q^{(1/2-s)N}.
\end{equation*}

Let $G\in \set{\GL,\Sp,\Orth}$.
Let $K,Q\ge 0$ be integers representing the number of factors in the numerator and denominator, respectively, of the Ratios Conjecture.
Let $\Crank,\Cbetti,\Cbetti',\Cslope,\Cstart\ge 1$ be constants independent of $n$.
Let $\Fr_q$ be the geometric Frobenius element of $\Gal(\ol{\FF}_q/\FF_q)$.
We now define a useful set of axioms in some generality.
We will avoid dual Schur functors $\Schur_\lambda(V^\ast)$, which would be needed for a full analysis of the case $G=\GL$.
\begin{enumerate}
    \item \emph{Axiom~L}(isse):
    For some prime $\ell\nmid q$,
    there exists a local system $V_n$ of $\ol{\QQ}_\ell$-vector spaces on $\mcal{P}_n$
    of some rank\footnote{This is the important case for us.
    The methods would also
    allow $c[n] \asymp d(n)^\theta$ for any fixed $\theta\in (0,1]$.}
    \begin{equation}
    \label{INEQ:linear-rank-growth}
    c[n] \in [\Crank^{-1} d(n) - 2, \Crank d(n)],
    \end{equation}
    pure of weight $0$,
    equipped with a perfect pairing $$V_n \times V_n \to \ol{\QQ}_\ell$$ of type $G$ if $G\in \set{\Sp,\Orth}$,\footnote{Equivalently, given a geometric point $x\in \mcal{P}_n$, we have a monodromy representation $\pi_1(\mcal{P}_n,x) \to G((V_n)_x)$.}
    such that
    \begin{equation}
    \label{EQN:completed-L-function-as-a-char-poly}
    \Lambda(s,f) = \det(1-q^{1/2-s}\Fr_q, (V_n)_f)
    \end{equation}
    for all $f\in \mcal{P}_n(\FF_q)$.
    In particular, $c(f) = c[n]$
    for all $f\in \mcal{P}_n(\FF_q)$.

    
    \item \emph{Axiom~B}(etti):
    For every local system $\mcal{L} = \Schur^G_\lambda(V_n) \otimes \Schur^G_\mu(V_n)$ with $\lambda,\mu\in \ptn$, $\lambda_1 \le K$, and $l(\mu) \le Q$, we have
    \begin{equation*}
        \sum_{k\ge 0} \dim{H_k(\mcal{P}_n \otimes \ol{\FF}_q, \mcal{L})}
        \le \Cbetti^{d(n)} (\Cbetti')^{\abs{\mu}} \rank{\mcal{L}}.
    \end{equation*}
    Here $H_k(X,\mcal{L})\defeq H^{2\dim(X)-k}_c(X,\mcal{L}(\dim{X}))$ for notational convenience.
    (By Poincar\'{e} duality, this is a reasonable definition of $H_k$, because our $X$ is smooth.)
    
    \item \emph{Axiom~S}(table):
    There exist integers $n_k\ge 1$ with
    \begin{equation}
    \label{INEQ:linear-stability-range}
    d(n_k) \le \Cslope k + \Cstart
    \end{equation}
    such that for each $k\ge 0$, $n\ge n_k$, $i\in \set{0,1,\dots,K}$, and $\lambda\in \ptn$ with $l(\lambda) \le c[n]/2$, there exists a $\Gal(\ol{\FF}_q/\FF_q)$-equivariant isomorphism
    \begin{equation*}
    H_k(\mcal{P}_n \otimes \ol{\FF}_q, \det(V_n)^i \otimes \Schur^G_\lambda(V_n)) \cong
    \begin{cases}
        H_k(\mcal{P}_{n_k} \otimes \ol{\FF}_q, \det(V_{n_k})^i \otimes \Schur^G_\lambda(V_{n_k})) & \textnormal{if }l(\lambda) \le c[n_k]/2, \\
        0 & \textnormal{if }l(\lambda) > c[n_k]/2. \\
    \end{cases}
    \end{equation*}
    (If $G=\Sp$, then $\det(V_n) = 1$ can be ignored,
    and\footnote{because $\Schur^{\Sp}_\lambda(V_{n_k}) = 0$ for $l(\lambda) > c[n_k]/2$; cf.~\cite{bergstrom2023hyperelliptic}*{\S11.3.17}.
    For the hyperelliptic ensemble, a much stronger stable $H_k$ vanishing statement, valid for $\abs{\lambda} \gg c[n_k]$ and not just for $l(\lambda) \gg c[n_k]$, is proven in \cite{bergstrom2023hyperelliptic}*{Theorem~7.0.2}.
    We will not need anything so precise, but it is still enlightening.}
    the formula $H_k(\mcal{P}_{n_k} \otimes \ol{\FF}_q, \Schur^G_\lambda(V_{n_k}))$ is valid for all $\lambda$.
    For $G = \Orth$, the more complicated formula above may be the correct generalization,\footnote{See Remark~\ref{RMK:Axiom-SV-vs-S} for further discussion.
    For $G = \GL$, it might be best to replace $\Schur^G_\lambda(V_n)$ and $l(\lambda)$, respectively,
    with $\Schur^G_\lambda(V_n) \otimes \Schur^G_\mu(V_n^\ast)$ and $l(\lambda)+l(\mu)$ for $\lambda,\mu\in \ptn$,
    but this would complicate notation.}
    at least up to adjusting the constants $\Cslope$ and $\Cstart$.)

\end{enumerate}




A priori, for the Ratios Conjecture to make sense, it would be natural to assume two more axioms,
familiar to analytic number theorists (and often already unconditionally proven).
But we will actually prove them
in the cases we need,
under Axioms~L, B, and~S.
\begin{enumerate}
    \item \emph{Axiom~F}(amily):
    The limit
    $$\Cfam \defeq \lim_{n \to \infty} \frac{\card{\mcal{P}_n(\FF_q)}}{q^{d(n)}}$$
    exists in $\RR$.
    
    \item \emph{Axiom~A}(verage):
    For each $\bm{\eps}\in \set{\pm 1}^K$ and $N_1,\dots,N_{K+Q}\in \ZZ_{\ge 0}$,
    the limit
    $$\lim_{n \to \infty} \frac{\sum_{f\in \mcal{P}_n(\FF_q)}
    (\prod_{1\le i\le K:\, \eps_i = 1} A_f(N_i))
    (\prod_{1\le j\le K:\, \eps_j = -1} (-1)^{c(f)} w(f) \ol{A}_f(N_j))
    (\prod_{K+1\le k\le K+Q} B_f(N_k))}{q^{d(n)}}$$
    exists in $\CC$.
    Call this limit $\Cavg(\bm{\eps},\bm{N})$.
    
    
    
    
\end{enumerate}
In Axiom~A, it is more technically convenient to have $q^{d(n)}$ in the denominator than $\card{\mcal{P}_n(\FF_q)}$.
Of course, this choice makes no real difference if Axiom~F holds with $\Cfam > 0$.



In \eqref{EQN:define-Cstab1}, \eqref{EQN:define-Cstab2}, \eqref{EQN:define-Cstab3}, and \eqref{EQN:define-Cstab4}, we will define certain explicit constants $\Cstabone,\Cstabtwo,\Cstabthr,\Cstabfou>0$ in terms of $K$, $Q$, $\Crank$, $\Cbetti$, $\Cslope$, $\Cstart$.
We can now state the main result of the section.

\begin{theorem}
\label{THM:general-axiomatization}
Assume Axioms~L, B, and~S.
If $G=\Orth$ (resp.~$G=\GL$), assume further that $K\le 1$ (resp.~$K=0$).\footnote{For clarity and simplicity, we will only treat general ratios (arbitrary $K,Q\ge 0$) in the case $G=\Sp$.}
For simplicity, assume $c[n] \ge 2Q$ for all $n\ge 1$.
Let
\begin{equation}
\label{INEQ:convenient-q-size-assumption}
\delta \defeq \frac{1}{24 \max(\Cstabtwo, 7(K+Q)\Crank^3\Cslope)},
\qquad\textnormal{and}\qquad
q\ge 2^{12} (2\Cbetti)^{1/(2\delta)}.
\end{equation}
Then Axioms~F and~A hold.
Let $\bm{s}\in \CC^{K+Q}$ lie in the open region
\begin{equation}
\label{INEQ:q-restricted-ratios-region}
\begin{split}
\Re(s_1),\dots,\Re(s_K) &\in (\tfrac12 - \delta, \tfrac12 + \delta), \\
\Re(s_{K+1}),\dots,\Re(s_{K+Q}) &> \tfrac12 + q^{-\delta} + \log_q(\Cbetti').
\end{split}
\end{equation}
Then for all $n\ge 1$, we have
\begin{equation}
\label{INEQ:main-general-goal}
\biggl\lvert{\frac{1}{q^{d(n)}}
\sum_{f\in \mcal{P}_n(\FF_q)}
\frac{\Lambda(s_1,f) \cdots \Lambda(s_K,f)}
{\Lambda(s_{K+1},f) \cdots \Lambda(s_{K+Q},f)}
- \mathsf{MT}(\bm{s};n)}\biggr\rvert
\le 5 q^{\Cstabfou} q^{-d(n)/(7\Crank^2\Cslope)},
\end{equation}
where $\mathsf{MT}(\bm{s};n)$ is a holomorphic function on \eqref{INEQ:q-restricted-ratios-region}, given by the formula\footnote{If $\Cfam > 0$, then $\card{\mcal{P}_n(\FF_q)} \cdot \mathsf{MT}(\bm{s};n)/\Cfam$ is the main term in the Ratios Recipe of \cites{conrey2005integral,conrey2008autocorrelation,andrade2014conjectures},
up to minor details in the handling of averages with root numbers.
In practice, there is complete agreement.}
\begin{equation}
\label{EQN:define-MT-series}
\mathsf{MT}(\bm{s};n)
\defeq \sum_{\bm{\eps}\in \set{\pm 1}^K} \mathsf{F}(\bm{\eps},\bm{s})
\prod_{1\le i\le K:\, \eps_i=-1} (-1)^{c[n]} q^{c[n](1/2-s_i)},
\end{equation}
where $\mathsf{F}(\bm{\eps},\bm{s})$ is a meromorphic function on \eqref{INEQ:q-restricted-ratios-region}, given by the formula
\begin{equation}
\label{EQN:define-F-series}
\mathsf{F}(\bm{\eps},\bm{s})
\defeq \sum_{N_1,\dots,N_{K+Q}\ge 0}
\Cavg(\bm{\eps},\bm{N})
\prod_{1\le i\le K} q^{\eps_i(1/2-s_i)N_i}
\prod_{K+1\le i\le K+Q} q^{(1/2-s_i)N_i}.
\end{equation}
\end{theorem}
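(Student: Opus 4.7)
The plan is to convert the ratios average into a finite sum of cohomological traces, and then split each trace into a stable part that assembles into $\mathsf{MT}(\bm{s};n)$ via Axiom~S and an unstable part that is controlled by Axiom~B together with purity. To start the expansion, for each numerator factor $i\le K$ insert the approximate functional equation \eqref{EQN:approx-FE} to write $\Lambda(s_i,f)$ as a sum over $\eps_i\in\{\pm 1\}$ of truncated Dirichlet data in $A_f(N_i)$ (when $\eps_i=+1$) or in $(-1)^{c[n]}w(f)\ol{A}_f(N_i)$ with the extra factor $q^{c[n](1/2-s_i)}$ (when $\eps_i=-1$), where $N_i\le c[n]/2$. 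For each denominator factor $j>K$, expand $\Lambda(s_j,f)^{-1}=\sum_{N_j\ge 0} B_f(N_j) q^{(1/2-s_j)N_j}$ and truncate at $N_j\le N_0\defeq\lfloor c_1 d(n)\rfloor$ for a small constant $c_1$. Since $V_n$ is pure of weight $0$ and has rank $c[n]\le\Crank d(n)$, the bound $\abs{B_f(N_j)}\le\binom{N_j+c[n]-1}{N_j}$ makes the Dirichlet tail a geometric series whose ratio decays in $q^{-(\Re(s_{K+j})-1/2-\log_q(\Cbetti'))}$; the shift $\log_q(\Cbetti')$ built into \eqref{INEQ:q-restricted-ratios-region} supplies the margin needed to absorb this tail into the final error.

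Next, for each surviving tuple $(\bm{\eps},\bm{N})$, Grothendieck--Lefschetz identifies the internal sum over $f\in\mcal{P}_n(\FF_q)$ with an alternating Frobenius trace on $H_\ast(\mcal{P}_n\otimes\ol{\FF}_q,\mcal{L}_{\bm{\eps},\bm{N}})$, where $\mcal{L}_{\bm{\eps},\bm{N}}$ is a tensor product of wedge powers $\bigwedge^{N_i}V_n$ (possibly twisted by $\det(V_n)$ to encode $w(f)$ whenever some $\eps_i=-1$) and symmetric powers $\map{Sym}^{N_j}V_n$. Decompose these tensor products via classical Schur--Weyl branching into $G$-irreducibles: the numerator wedges supply partitions $\lambda$ with $\lambda_1\le K$, and the denominator symmetric powers supply partitions $\mu$ with $l(\mu)\le Q$, exactly matching the hypotheses of Axioms~B and~S. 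Choose a cutoff $k^\ast\asymp d(n)/\Cslope$, so that $d(n_{k^\ast})\le d(n)$ by \eqref{INEQ:linear-stability-range}. For $k\le k^\ast$, Axiom~S replaces $H_k(\mcal{P}_n\otimes\ol{\FF}_q,\cdot)$ by the $n$-independent $H_k(\mcal{P}_{n_k}\otimes\ol{\FF}_q,\cdot)$ (which vanishes when $l(\lambda)>c[n_k]/2$). Reassembling in $\bm{N}$ and removing truncation reproduces exactly \eqref{EQN:define-MT-series}--\eqref{EQN:define-F-series}; holomorphy of $\mathsf{F}(\bm{\eps},\bm{s})$ on \eqref{INEQ:q-restricted-ratios-region} follows from the same Dirichlet-series estimate used above.

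For the unstable part $k>k^\ast$, combine the Betti bound $\sum_k \dim H_k(\mcal{P}_n\otimes\ol{\FF}_q,\mcal{L})\le \Cbetti^{d(n)}(\Cbetti')^{\abs{\mu}}\rank(\mcal{L})$ of Axiom~B with the purity bound $\abs{\text{Frobenius eigenvalue on }H_k}\le q^{k/2}$ (a consequence of $V_n$ being pure of weight $0$ and the normalization $H_k=H^{2d(n)-k}_c(\cdot)(d(n))$). This yields an unstable contribution of order $q^{O_{K,Q}(1)}\Cbetti^{d(n)}q^{-k^\ast/2}$ per surviving $(\bm{\eps},\bm{N})$; summing over the polynomially many such tuples and using $q^{2\delta}\ge 2\Cbetti$ from \eqref{INEQ:convenient-q-size-assumption} to dominate $\Cbetti^{d(n)}$ gives the claimed error $\le 5q^{\Cstabfou}q^{-d(n)/(7\Crank^2\Cslope)}$. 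Axioms~F and~A drop out of the same machinery applied, respectively, to the constant sheaf and to single wedge/symmetric coefficient sheaves.

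The principal technical obstacle is the interlocking calibration of $N_0$, $k^\ast$, and the $q$-threshold in \eqref{INEQ:convenient-q-size-assumption} so that simultaneously (a) the Dirichlet tail is absorbed into the target error, (b) within $N_j\le N_0$ every partition $\lambda$ arising has $l(\lambda)\le c[n_k]/2$ throughout the stability range (otherwise Axiom~S would vanish prematurely instead of contributing to the main term), and (c) the exponential factor $\Cbetti^{d(n)}$ is dominated by the purity saving $q^{-k^\ast/2}$. Requirement (c) is precisely what forces the $q$-lower bound in \eqref{INEQ:convenient-q-size-assumption}, and the explicit exponent $1/(7\Crank^2\Cslope)$ emerges from balancing the Weil saving at the cutoff against the stability slope and the cube $\Crank^3$ buried in the definition of $\delta$. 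The prefactor $\Cstabfou$ absorbs the polynomial contributions from summing over $(\bm{\eps},\bm{N})$ and from the ranks of the sheaves $\mcal{L}_{\bm{\eps},\bm{N}}$.
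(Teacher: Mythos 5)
Your overall engine (expand into cohomological traces, stabilize in a range $k\le k^\ast\asymp d(n)/\Cslope$ via Axiom~S, control $k>k^\ast$ via Axiom~B plus Deligne purity, and use the largeness of $q$ in \eqref{INEQ:convenient-q-size-assumption} to beat $\Cbetti^{d(n)}$) matches the paper, but your route to the main term is AFE-first, recipe-style, and it has a genuine gap exactly where the paper works hardest. For $K\ge 1$ the region \eqref{INEQ:q-restricted-ratios-region} allows $\Re(s_i)<\tfrac12$, and there the series \eqref{EQN:define-F-series} defining $\mathsf{F}(\bm{\eps},\bm{s})$ does not converge absolutely: already for the hyperelliptic family with $(K,Q)=(1,0)$ one has $\Cavg(+1,N)\asymp 1$ for even $N$ (square terms), so $\sum_N \Cavg(+1,N)q^{(1/2-s)N}$ converges only for $\Re(s)>\tfrac12$ and $\mathsf{F}$ has a pole at $s=\tfrac12$ (a $\zeta_{\FF_q[t]}(2s)$-type factor). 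Hence your claim that ``holomorphy of $\mathsf{F}(\bm{\eps},\bm{s})$ follows from the same Dirichlet-series estimate'' is false; the theorem only asserts meromorphy of each $\mathsf{F}(\bm{\eps},\cdot)$, and holomorphy of $\mathsf{MT}$ requires cancellation among the $2^K$ terms. A termwise plan --- stabilize each $(\bm{\eps},\bm{N})$-coefficient, let $n\to\infty$, then ``reassemble in $\bm{N}$ and remove truncation'' --- cannot produce \eqref{EQN:define-MT-series}--\eqref{EQN:define-F-series} on the whole region, since the reassembled series diverges there and the identification must pass through meromorphic continuation. The paper's mechanism for this is precisely what your proposal lacks: it never invokes the approximate functional equation in the expansion; instead it computes the exact multiplicities $m^0_\rho(\bm{x};n)$ via skew Howe duality and the Weyl character formula (Lemma~\ref{LEM:skew-multiplicity-formula}), splits their numerators into the $2^K$ pieces $\prod_u x_u^{(1-\eps_u)c[n]/2}$ (Definition~\ref{DEFN:stable-numerators}), and obtains the $\bm{\eps}$-decomposition together with the continuation by multiplying through by the Weyl denominator $D_K(\bm{x})$, proving absolute convergence of $\mathsf{SN}=D_K\cdot\mathsf{F}$ (Lemma~\ref{LEM:stable-polarized-bound}) and dividing back (Lemmas~\ref{LEM:key-formal-coefficient-calculation} and~\ref{LEM:polarization-identity}); Axioms~F and~A are extracted from that formal bookkeeping as well.

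There is also a quantitative error in your setup: truncating each denominator series at $N_j\le N_0=\lfloor c_1 d(n)\rfloor$ does not leave a negligible tail. With $\Re(s_j)-\tfrac12-\log_q(\Cbetti')$ as small as $q^{-\delta}$, the terms $\abs{B_f(N)}\,q^{(1/2-\Re(s_j))N}\le \binom{N+c[n]-1}{c[n]-1}q^{-q^{-\delta}N}$ keep growing until $N\asymp c[n]q^{\delta}/\log q\gg d(n)$, so at $N_0\asymp d(n)$ you are discarding the bulk of the series, not a tail; the shift $\log_q(\Cbetti')$ is there to absorb the $(\Cbetti')^{\abs{\mu}}$ factor of Axiom~B, not to create usable geometric decay. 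The paper avoids truncation entirely and instead pays a factor $\min(1,\gamma)^{-c[n]Q}$ with $\gamma=q^{-\delta}$ (Lemmas~\ref{LEM:bound-unstable-degrees} and~\ref{LEM:stable-gap-bounds}), absorbed by the smallness of $\delta$ and the $q$-threshold; your calibration (a)--(c) would need to be redone along those lines. Finally, your requirement (b) is neither achievable nor needed: partitions with $l(\lambda)>c[n_k]/2$ inevitably occur once $N_i$ is of size $c[n]/2$, and their stable vanishing under Axiom~S is exactly what carves out the diagonal main term rather than a failure mode to be engineered away.
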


\begin{remark}
\label{RMK:Axiom-SV-vs-S}
In place of Axiom~S in Theorem~\ref{THM:general-axiomatization}, one could assume the following:
\begin{itemize}
\item \emph{Axiom~SV}:
There exist constants $\vartheta,\varsigma>0$
such that whenever $0\le k\le \vartheta\cdot d(n) - \varsigma$ and $\lambda\in \ptn$,
there exists a $\Gal(\ol{\FF}_q/\FF_q)$-equivariant isomorphism
$$H_k(\mcal{P}_n \otimes \ol{\FF}_q, \Schur^G_\lambda(V_n))
\cong H_k(\mcal{P}_{n+1} \otimes \ol{\FF}_q, \Schur^G_\lambda(V_{n+1})).$$
\end{itemize}
Axiom~SV implies Axiom~S, and is equivalent for $G=\Sp$.
Axiom~S likely does not imply Axiom~SV in general.
Axiom~SV implies
the following vanishing statement:
\begin{itemize}
    
    \item If $0\le k\le \vartheta\cdot d(n) - \varsigma$
    and $l(\lambda) > \Crank \max(d(1), 2(k+\varsigma)/\vartheta)$, then
    $$H_k(\mcal{P}_n \otimes \ol{\FF}_q, \Schur^G_\lambda(V_n)) = 0.$$
\end{itemize}
This includes $\lambda = (j^{c[n]})$ for $j\ge 1$ and $d(n) > \Crank^2 \max(d(1), 2(k+\varsigma)/\vartheta)$.
So if $G\in \set{\GL,\Orth}$,
then\footnote{by the Grothendieck--Lefschetz trace formula and Deligne's purity theorem
(together with the classical Weyl equidistribution criterion on $\RR/\ZZ$ if $G=\GL$)}
Axiom~SV implies equidistribution of root numbers,
assuming Axiom~B, $q\gg_{\vartheta,\Crank,\Cbetti,\Cbetti'} 1$, and $\Cfam>0$.
However, Axiom~S
is likely a weaker assumption than Axiom~SV,
and it might apply to families where
equidistribution of $w(f)$ fails (as can happen \cites{helfgott2004behaviour,conrad2005root}).
\end{remark}

\begin{remark}

If $K=0$ (the case of ``negative moments'') in Theorem~\ref{THM:general-axiomatization}, then we only need Axiom~S for partitions $\lambda\in \ptn$ with $l(\lambda) \le Q$.\footnote{In particular, the root numbers $w(f)$ discussed in Remark~\ref{RMK:Axiom-SV-vs-S} play no role if $K=0$.}
The combinatorics of the proof also simplifies a lot if $K=0$.
It increases in difficulty with $K$.
\emph{Skew Howe duality}\footnote{See \cite{nazarov2021skew}*{\S\S1--2, especially (1.4) and \S2.2} for a rich historical overview with many references.}
\cite{howe1995perspectives}*{\S\S3.8.7--3.8.9; \S4} (or the associated character identity \cite{adamovich1996tilting}*{Proposition~3.2})
seems to play a crucial role in connecting stable traces to the Ratios Recipe for large $K$.



\end{remark}

\section{General analysis}
\label{SEC:general-analysis}

In this section, we prove Theorem~\ref{THM:general-axiomatization}.
Let assumptions be as in the theorem,
except for the restrictions
\eqref{INEQ:convenient-q-size-assumption} and \eqref{INEQ:q-restricted-ratios-region} on $q$ and $\bm{s}$ (which we will instead gradually introduce).

We need some character and operator notation.
For a finite-dimensional vector space $V$ over a field of characteristic zero, let $\chi^G_\lambda(A)$ be the trace of $A\in G(V)$ on $\Schur^G_\lambda(V)$;
this is an irreducible character of $G(V)$ if $\Schur^G_\lambda(V) \ne 0$,
and the zero function if $\Schur^G_\lambda(V) = 0$.

The trace $\chi^G_\lambda(A)$ depends only on $G$, $\lambda$, and the multiset $\Theta$ of eigenvalues of $A$ on $V$.
(This is clear for $G=\GL$, and the general case follows by induction on $\abs{\lambda}$.)
Let $$\chi^G_\lambda(\Theta) \defeq \chi^G_\lambda(A),
\quad \det(1+x\Theta) \defeq \det(1+xA),$$
for notational convenience.
If the base field of $V$ is $\CC$, let $G(V)_1 \belongs G(V)$ be the set of elements $A\in G(V)$ whose eigenvalues all lie on the complex unit circle.

If $f\in \mcal{P}_n(\FF_q)$, then by \eqref{EQN:completed-L-function-as-a-char-poly} we have
$$\Lambda(s,f) = \prod_{\alpha\in \Theta_f} (1-q^{1/2-s}\alpha),$$
where $\Theta_f$ is the multiset of $c[n]$ eigenvalues $\alpha\in \ol{\QQ}$ of $\Fr_q$ on $(V_n)_f$.
By the purity and pairing assumptions in Axiom~L, the multiset $\Theta_f$ can be realized as the multiset of eigenvalues of some matrix $A\in G(\CC^{c[n]})_1$.
For $\lambda\in \ptn$, let $$\tr_\lambda(n) \defeq
\frac{1}{q^{d(n)}} \sum_{f\in \mcal{P}_n(\FF_q)}
\tr(\Fr_q, \Schur^G_\lambda{(V_n)_f})
= \frac{1}{q^{d(n)}} \sum_{f\in \mcal{P}_n(\FF_q)} \chi^G_\lambda(\Theta_f).$$
Similarly define $\tr_{\lambda \otimes \mu}$ in terms of $\Schur^G_\lambda \otimes \Schur^G_\mu$;
then $\tr_{\lambda \otimes \mu}(n) = q^{-d(n)} \sum_{f\in \mcal{P}_n(\FF_q)} \chi^G_\lambda\chi^G_\mu(\Theta_f)$.


The group $G$ is reductive \cite{goodman2009symmetry}*{Theorem 3.3.11}, though disconnected if $G=\Orth$.
By decomposing $(\bigwedge \CC^{c[n]})^{\otimes K}$ into irreducible rational $G$-representations, we may write
\begin{equation}
\label{EQN:wedge-decompose}
\prod_{1\le i\le K} \det(1+x_iA)
= \prod_{1\le i\le K} \sum_{d\ge 0} x_i^d \tr({\textstyle \bigwedge^d{A}})
= \sum_{\rho\in \ptn} m^0_\rho(\bm{x};n) \chi^G_\rho(A)
\end{equation}
for all $A\in G(\CC^{c[n]})$,
where $m^0_\rho(\bm{x};n)$ is a polynomial in $\bm{x}=(x_1,\dots,x_K)$ with nonnegative integer coefficients.
Similarly, for some polynomials $m^1_\mu \ge 0$ we have
\begin{equation}
\label{EQN:sym-decompose}
\prod_{1\le i\le Q} \det(1-y_iA)^{-1}
= \prod_{1\le i\le Q} \sum_{d\ge 0} y_i^d \tr(\map{Sym}^d{A})
= \sum_{\mu\in \ptn} m^1_\mu(\bm{y};n) \chi^G_\mu(A),
\end{equation}
with absolute convergence for $A\in G(\CC^{c[n]})_1$ and $\abs{y_1},\dots,\abs{y_Q} < 1$.
(To justify absolute convergence, the main point is that $\abs{\chi^G_\mu(A)} \le \dim \Schur^G_\mu(\CC^{c[n]}) = \chi^G_\mu(I_{c[n]})$.)

For later convenience, we record two simple observations.

\begin{proposition}
\label{PROP:m^1_mu-degree-lower-bound}
Each monomial term of $m^1_\mu(\bm{y};n)$ has total degree $\ge \abs{\mu}$.
\end{proposition}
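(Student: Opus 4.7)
The plan is to expand the product side of \eqref{EQN:sym-decompose} in monomials $\bm{y}^{\bm{d}} = y_1^{d_1}\cdots y_Q^{d_Q}$ and compare with the Schur expansion term by term. Starting from $\det(1-y_iA)^{-1} = \sum_{d\ge 0} y_i^d \tr(\mathrm{Sym}^d A)$, we get
\begin{equation*}
\prod_{1\le i\le Q} \det(1-y_iA)^{-1}
= \sum_{d_1,\dots,d_Q\ge 0} \bm{y}^{\bm{d}}\, \tr{\bigl(\mathrm{Sym}^{d_1}A \otimes \cdots \otimes \mathrm{Sym}^{d_Q}A\bigr)}.
\end{equation*}
So it suffices to prove the following claim: if $\bigotimes_{i=1}^Q \mathrm{Sym}^{d_i}(V)$ contains $\Schur^G_\mu(V)$ as a $G$-subrepresentation, then $\abs{\mu} \le d_1 + \cdots + d_Q$.

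To establish the claim, first note the trivial inclusion $\bigotimes_{i=1}^Q \mathrm{Sym}^{d_i}(V) \belongs V^{\otimes N}$ with $N = d_1+\cdots+d_Q$, so it is enough to understand which $\Schur^G_\mu(V)$ appear in $V^{\otimes N}$. For $G=\GL$, classical Schur--Weyl duality decomposes $V^{\otimes N}$ into isotypic pieces $\Schur^{\GL}_\mu(V)$ indexed by partitions $\mu$ with $\abs{\mu} = N$, so in fact the sharper equality $\abs{\mu} = N$ holds. For $G \in \set{\Sp,\Orth}$, the perfect pairing identifies $V \cong V^\ast$, and the Brauer--Weyl analogue of Schur--Weyl duality (see \cite{fulton2013representation}*{\S17.3} for $\Sp$ and \cite{fulton2013representation}*{\S19.5} for $\Orth$) decomposes $V^{\otimes N}$ into a sum of $\Schur^G_\mu(V)$ over partitions $\mu$ with $\abs{\mu} \le N$ and $\abs{\mu} \equiv N \pmod{2}$, the inequality accounting for contractions via the pairing. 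Either way, only $\mu$ with $\abs{\mu} \le N$ appear, which is exactly the desired claim.

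Since the irreducible characters $\chi^G_\mu$ with $\Schur^G_\mu(V) \ne 0$ are linearly independent as functions on $G$, the decomposition on the right-hand side of \eqref{EQN:sym-decompose} into distinct characters is unique up to the ambiguity in the zero-dimensional pieces, and the claim above then forces the coefficient of $\bm{y}^{\bm{d}}$ in $m^1_\mu(\bm{y};n)$ to vanish whenever $\abs{\mu} > d_1 + \cdots + d_Q$. Equivalently, every monomial occurring in $m^1_\mu(\bm{y};n)$ has total degree at least $\abs{\mu}$. There is no real obstacle here — the proof is a clean application of the representation-theoretic decomposition of tensor powers; the only place to be slightly careful is the symplectic/orthogonal case, where one must remember that the pairing introduces contractions and hence strict inequality $\abs{\mu} < N$ is possible, but the inequality $\abs{\mu} \le N$ is unaffected.
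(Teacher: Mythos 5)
Your proof is correct and follows essentially the same route as the paper: the paper's one-line argument invokes precisely the fact you establish, namely that $\Hom_G(\Schur^G_\mu(V_n),V_n^{\otimes N})=0$ for $N<\abs{\mu}$, applied to the coefficient of $\bm{y}^{\bm{d}}$ in \eqref{EQN:sym-decompose}. You merely make the underlying Schur--Weyl/Brauer decomposition and the linear independence of characters explicit, which is a fine (if more detailed) write-up of the same idea.
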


\begin{proof}
This is clear from \eqref{EQN:sym-decompose}
and the fact that $\Hom_G(\Schur^G_\mu(V_n),V_n^{\otimes \abs{\lambda}}) = 0$ for $\abs{\lambda}<\abs{\mu}$.
\end{proof}

\begin{lemma}
\label{LEM:near-zero-optimization}
For reals $x\ge 0$, we have $\min(1,x) \ge 1-e^{-x} \ge \min(1,x) / 2$.
\end{lemma}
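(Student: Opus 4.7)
The plan is to split into the two cases $0 \le x \le 1$ and $x \ge 1$, in both of which $\min(1,x)$ becomes an explicit expression, and then use elementary calculus.

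For the upper bound $1-e^{-x}\le \min(1,x)$: when $x\ge 1$ this reduces to $e^{-x}\ge 0$, which is trivial. For $0\le x\le 1$ I would invoke the standard convexity inequality $e^{-x}\ge 1-x$ (valid for all real $x$, as the tangent line to the convex function $e^{-x}$ at $0$), which rearranges exactly to $1-e^{-x}\le x = \min(1,x)$. So the first half is essentially a one-line consequence of a ubiquitous bound.

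For the lower bound $1-e^{-x}\ge \min(1,x)/2$: on $x\ge 1$ this asks for $e^{-x}\le 1/2$, and since $x\mapsto e^{-x}$ is decreasing, it suffices to check $e^{-1}<1/2$, which holds numerically ($e>2$). On $0\le x\le 1$ I want $h(x) \defeq 1-e^{-x} - x/2 \ge 0$. I would note $h(0)=0$ and $h'(x)=e^{-x}-1/2$, so $h$ is increasing on $[0,\ln 2]$ and decreasing on $[\ln 2,1]$; hence $\min_{[0,1]} h = \min(h(0), h(1)) = \min(0,\, 1/2 - 1/e)$, and $1/e<1/2$ gives $h(1)>0$, so $h\ge 0$ throughout $[0,1]$.

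The proof involves no conceptual obstacle; the mildly delicate point is the second inequality on the interval $[0,1]$, where a naive one-line convexity argument does not suffice and one genuinely needs the endpoint check $1/e<1/2$ (equivalently, $e>2$) together with the monotonicity analysis above. Since both halves are tight only up to the constant $2$ in the lower bound and not as $x\to 0$, one cannot hope to prove the inequality purely from a first-order Taylor expansion at $0$; the elementary case split is the cleanest route.
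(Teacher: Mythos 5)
Your proof is correct and follows essentially the same route as the paper: split at $x=1$, handle $x\ge 1$ trivially via $e^{-1}\le 1/2$, and settle $[0,1]$ by elementary calculus. The only cosmetic difference is that the paper treats both inequalities on $[0,1]$ at once by observing that $(1-e^{-x})/x$ decreases from $1$ to $1-e^{-1}\ge 1/2$, whereas you verify the upper bound via the tangent-line inequality $e^{-x}\ge 1-x$ and the lower bound via the sign analysis of $h(x)=1-e^{-x}-x/2$.
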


\begin{proof}
If $x\ge 1$, then $1 \ge 1-e^{-x} \ge 1-e^{-1} \ge 1/2$.
If $x\in [0,1]$, then $(1-e^{-x})/x$ is maximized at $x=0$ (with value $1$) and minimized at $x=1$ (with value $1-e^{-1} \ge 1/2$).
\end{proof}

Multiplying \eqref{EQN:wedge-decompose} and \eqref{EQN:sym-decompose} with ``$A=\Theta_f$'' for $f\in \mcal{P}_n(\FF_q)$,
summing over $f$,
switching the order of summation (justified by the absolute convergence in \eqref{EQN:wedge-decompose} and \eqref{EQN:sym-decompose}, and the finiteness of the set $\mcal{P}_n(\FF_q)$),
and using the definition of $\tr_{\rho \otimes \mu}(n)$,
we get
\begin{equation}
\label{EQN:expand-ratio-average}
\frac{1}{q^{d(n)}} \sum_{f\in \mcal{P}_n(\FF_q)}
\frac{\prod_{1\le i\le K} \det(1+x_i\Theta_f)}{\prod_{1\le i\le Q} \det(1-y_i\Theta_f)}
= \sum_{\rho,\mu\in \ptn}
m^0_\rho(\bm{x};n)
m^1_\mu(\bm{y};n)
\tr_{\rho \otimes \mu}(n).
\end{equation}
By the Grothendieck--Lefschetz trace formula, we have
\begin{equation}
\label{EQN:initial-LTF}
\tr_{\rho \otimes \mu}(n) = \sum_{k\ge 0} (-1)^k \tr_{\rho \otimes \mu}(n;k),
\end{equation}
where $\tr_{\rho \otimes \mu}(n;k)$ denotes the trace of $\Fr_q$ on $H_k(\mcal{P}_n \otimes \ol{\FF}_q, \Schur^G_\rho(V_n) \otimes \Schur^G_\mu(V_n))$.
Let $\tr_\lambda(n;k) \defeq \tr_{\lambda \otimes \emptyset}(n;k)$.
In order to pass to stable traces in \eqref{EQN:expand-ratio-average},
we need the following lemma.

\begin{lemma}
[Unstable degrees]
\label{LEM:bound-unstable-degrees}
Let $U\in \RR$, $\bm{x}\in \CC^K$, and $\bm{y}\in \CC^Q$.
Then
\begin{equation*}
\sum_{\rho,\mu\in \ptn}
\sum_{k\ge U}
\abs{m^0_\rho(\bm{x};n)
m^1_\mu(\bm{y};n)
\tr_{\rho \otimes \mu}(n;k)}
\le q^{-U/2} \Cbetti^{d(n)} \frac{(2q^\beta)^{c[n] K} 2^{c[n] Q}}{\min(1,\gamma)^{c[n] Q}},
\end{equation*}
provided $\abs{x_i}\le q^\beta$ and $\abs{y_j}\le q^{-\gamma}/\Cbetti'$ with $\beta\ge 0$ and $\gamma>0$.
\end{lemma}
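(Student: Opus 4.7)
My plan is to combine Deligne's purity with Axiom~B and then perform a direct generating-function evaluation to sum over $\rho,\mu$.

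First, by Axiom~L the local system $V_n$ is pure of weight $0$, hence so is $\Schur^G_\rho(V_n)\otimes\Schur^G_\mu(V_n)$ (Schur functors preserve purity). Using the convention $H_k(X,\mcal{L})=H_c^{2\dim X-k}(X,\mcal{L}(\dim X))$ on the smooth $\mcal{P}_n$, Deligne's weight theorem yields that every Frobenius eigenvalue on this $H_k$ has absolute value at most $q^{-k/2}$; consequently $\abs{\tr_{\rho\otimes\mu}(n;k)}\le q^{-k/2}\dim H_k$. To invoke Axiom~B, I check that $\rho_1\le K$ for every $\rho$ arising in \eqref{EQN:wedge-decompose}, since $\bigwedge^dV=\Schur_{(1^d)}(V)$ is column-shaped, tensoring $K$ columns yields (by Littlewood--Richardson) only partitions of width $\le K$, and for $G\in\{\Sp,\Orth\}$ the $\GL\downarrow G$ branching only shrinks Young diagrams (as $\Schur^G_\rho(V)\belongs\Schur_\rho(V)$). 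Similarly $l(\mu)\le Q$ in \eqref{EQN:sym-decompose} since $\map{Sym}^dV=\Schur_{(d)}(V)$ is row-shaped. Thus Axiom~B applies and gives
\begin{equation*}
\sum_{k\ge U}\abs{\tr_{\rho\otimes\mu}(n;k)}\le q^{-U/2}\Cbetti^{d(n)}(\Cbetti')^{\abs{\mu}}\dim\Schur^G_\rho(\CC^{c[n]})\dim\Schur^G_\mu(\CC^{c[n]}).
\end{equation*}

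It remains to sum over $\rho,\mu$ weighted by $\abs{m^0_\rho(\bm{x};n)m^1_\mu(\bm{y};n)}$. Since $m^0_\rho,m^1_\mu$ have nonnegative coefficients, this is controlled by the same polynomials evaluated at $(\abs{x_1},\dots,\abs{x_K})$ and $(\abs{y_1},\dots,\abs{y_Q})$. Setting $A=I_{c[n]}$ in \eqref{EQN:wedge-decompose} gives
\begin{equation*}
\sum_\rho m^0_\rho(\abs{x_1},\dots,\abs{x_K};n)\dim\Schur^G_\rho(\CC^{c[n]})=\prod_{i=1}^K(1+\abs{x_i})^{c[n]}\le(2q^\beta)^{c[n]K}
\end{equation*}
(using $q^\beta\ge 1$). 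For the $\mu$-sum, Proposition~\ref{PROP:m^1_mu-degree-lower-bound} together with $\Cbetti'\ge 1$ yields $(\Cbetti')^{\abs{\mu}}m^1_\mu(\abs{y_1},\dots,\abs{y_Q};n)\le m^1_\mu(\Cbetti'\abs{y_1},\dots,\Cbetti'\abs{y_Q};n)$, so setting $A=I_{c[n]}$ in \eqref{EQN:sym-decompose} gives
\begin{equation*}
\sum_\mu(\Cbetti')^{\abs{\mu}}m^1_\mu(\abs{y_1},\dots,\abs{y_Q};n)\dim\Schur^G_\mu(\CC^{c[n]})\le\prod_{j=1}^Q(1-\Cbetti'\abs{y_j})^{-c[n]}\le(1-q^{-\gamma})^{-c[n]Q}.
\end{equation*}
Finally, Lemma~\ref{LEM:near-zero-optimization} applied to $1-q^{-\gamma}=1-e^{-\gamma\log q}$ gives $(1-q^{-\gamma})^{-1}\le 2/\min(1,\gamma)$ (in the regime $\log q\ge 1$, which holds well within the $q$-range of Theorem~\ref{THM:general-axiomatization}), producing the factor $2^{c[n]Q}/\min(1,\gamma)^{c[n]Q}$. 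Multiplying the three estimates together yields the desired right-hand side.

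The only conceptually nontrivial step is verifying the width/length constraints on $(\rho,\mu)$ needed to apply Axiom~B; everything else is direct generating-function evaluation and bookkeeping, so I do not anticipate substantive difficulty.
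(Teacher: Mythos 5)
Your proof follows the same skeleton as the paper's: bound $\sum_{k\ge U}\abs{\tr_{\rho\otimes\mu}(n;k)}$ by Deligne's purity theorem plus Axiom~B, absorb the factor $(\Cbetti')^{\abs{\mu}}$ into $m^1_\mu$ via Proposition~\ref{PROP:m^1_mu-degree-lower-bound}, evaluate the resulting $\rho,\mu$-sums by specializing \eqref{EQN:wedge-decompose} and \eqref{EQN:sym-decompose} at the identity, and control $(1-q^{-\gamma})^{-1}$ by Lemma~\ref{LEM:near-zero-optimization}. All of that matches the paper's argument step for step.

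The one place where your write-up is weaker than the fact you need is the justification that only $\rho$ with $\rho_1\le K$ occur in \eqref{EQN:wedge-decompose}. Your reasoning --- Littlewood--Richardson gives $\GL$-constituents $\Schur_\nu$ of width $\le K$, and then ``the $\GL\downarrow G$ branching only shrinks Young diagrams (as $\Schur^G_\rho(V)\belongs\Schur_\rho(V)$)'' --- is not a proof: the containment $\Schur^G_\rho(V)\belongs\Schur_\rho(V)$ says nothing about which $\rho$ appear when one restricts a \emph{different} $\GL$-irreducible $\Schur_\nu$ to $G$. The only ``shrinking'' statement available (Lemma~\ref{LEM:stable-GL-to-G-branching-containment}, i.e.\ the stable Littlewood restriction rule) requires $l(\nu)\le c[n]/2$, and the constituents $\nu$ of $\bigwedge^{d_1}\otimes\cdots\otimes\bigwedge^{d_K}$ have length up to $c[n]$, far outside that range, so the cited rule simply does not apply there. (For the $\map{Sym}$ side your argument is fine, precisely because $l(\nu)\le Q\le c[n]/2$ by the standing assumption $c[n]\ge 2Q$.) The paper is aware of this: its proof of the lemma explicitly defers the support claim (``we will review why later'') and later establishes it in Lemma~\ref{LEM:skew-multiplicity-formula}(1), via skew Howe duality for $G=\Sp$ and by direct inspection in the cases $K=0$ and $(G,K)=(\Orth,1)$ allowed by Theorem~\ref{THM:general-axiomatization}. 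So the fact you invoke is true, but to make your argument complete you should either cite that unstable-range input (skew Howe duality) or restrict, as the paper does, to the $(G,K)$ combinations for which it is proved, rather than rely on a stable-range branching heuristic.
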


\begin{proof}
By replacing $U$ with $\max(0,U)$, we may assume $U\ge 0$.
Also, since $m^0_\rho$ and $m^1_\mu$ have nonnegative coefficients, we may use the triangle inequality on $m^0_\rho$ and $m^1_\mu$ to reduce to the case $x_i,y_j\in \RR_{\ge 0}$.
But by Deligne's purity theorem and Axiom~B, we have
\begin{equation}
\label{INEQ:Deligne-plus-Betti-bound}
\sum_{k\ge U} \abs{\tr_{\rho \otimes \mu}(n;k)}
\le q^{-U/2} \Cbetti^{d(n)} (\Cbetti')^{\abs{\mu}}
\rank(\Schur^G_\rho(V_n) \otimes \Schur^G_\mu(V_n))
\end{equation}
whenever $\rho_1\le K$ and $l(\mu)\le Q$.
It is known that these are the only relevant $\rho,\mu\in \ptn$;
we will review why later.
But $(\Cbetti')^{\abs{\mu}} m^1_\mu(\bm{y};n) \le m^1_\mu(\Cbetti'\bm{y};n)$ by Proposition~\ref{PROP:m^1_mu-degree-lower-bound}, so we get
\begin{equation*}
\sum_{k\ge U}
\sum_{\rho,\mu\in \ptn}
\abs{m^0_\rho(\bm{x};n)
m^1_\mu(\bm{y};n)
\tr_{\rho \otimes \mu}(n;k)}
\le q^{-U/2} \Cbetti^{d(n)}
\sum_{\rho,\mu\in \ptn}
m^0_\rho(\bm{x};n)
m^1_\mu(\Cbetti'\bm{y};n)\,
\chi^G_\rho \chi^G_\mu(I_{c[n]}).
\end{equation*}
The last sum over $\rho,\mu\in \ptn$
factors as \eqref{EQN:wedge-decompose} times \eqref{EQN:sym-decompose} for $A=1$,
and thus equals
$$\frac{\prod_{1\le i\le K} (1+x_i)^{c[n]}}
{\prod_{1\le i\le Q} (1-\Cbetti'y_i)^{c[n]}}
\le \frac{(1+q^\beta)^{c[n] K}}{(1-q^{-\gamma})^{c[n] Q}}
\le \frac{(2q^\beta)^{c[n] K} 2^{c[n] Q}}{\min(1,\gamma\log{q})^{c[n] Q}}
\le \frac{(2q^\beta)^{c[n] K} 2^{c[n] Q}}{\min(1,\gamma)^{c[n] Q}},$$
where we bound $1-q^{-\gamma}$ from below using Lemma~\ref{LEM:near-zero-optimization} with $x = \gamma\log{q}$.\footnote{The crude nature of this bound may be compared with the first (weaker) stage of the Hadamard three circle theorem approach to bounding $L$-functions near the critical line, used in e.g.~\cite{browning2015rational}*{proof of Lemma~8.4}.
Implicitly, the bound $L(s,\chi_d)^{-1} \ll_{q,\gamma,\eps} \abs{d}^\eps = q^{\eps \deg{d}}$ for $\Re(s) \ge \frac12+\gamma$ captures some nontrivial cancellation over zeros.
(See \cite{bui2021ratios}*{Lemma~5.3} or \cite{florea2021negative}*{Lemma~2.1} for more precise bounds.)}
\end{proof}

At this point the argument gets more complicated for $K\ge 1$ than for $K=0$,
because we need to pass from the unstable entities $m^0_\rho(\bm{x};n)$ and $\Schur_\rho(V_n) \otimes \Schur_\mu(V_n)$ to stable versions thereof.
We begin with some basic results on branching from $\GL$ to $G$.

\begin{lemma}
\label{LEM:stable-GL-to-G-branching-containment}
Let $\lambda,\mu\in \ptn$ with $l(\lambda) \le c[n]/2$.
Suppose $\Hom_G(\Schur^G_\mu(V_n), \Schur_\lambda(V_n)) \ne 0$.
Then $\mu \belongs \lambda$.
In particular, $\mu_1 \le \lambda_1$, $l(\mu) \le l(\lambda)$, and $\abs{\mu} \le \abs{\lambda}$.
\end{lemma}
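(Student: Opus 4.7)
The plan is to reduce the containment to the Littlewood--Richardson rule via Littlewood's stable branching formula from $\GL(V_n)$ to $G$. First, if $\Schur^G_\mu(V_n) = 0$, then $\Hom_G(\Schur^G_\mu(V_n), \Schur_\lambda(V_n)) = 0$, contradicting the hypothesis. So $\Schur^G_\mu(V_n)$ is a nonzero irreducible $G$-representation, and the Hom condition is equivalent to saying that $\Schur^G_\mu(V_n)$ occurs with positive multiplicity as a $G$-summand of the restriction $\Schur_\lambda(V_n)|_G$.

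For $G\in\set{\Sp,\Orth}$, the hypothesis $l(\lambda)\le c[n]/2$ is precisely the classical ``stable range'' in which Littlewood's restriction formula takes the clean form
$$\Schur_\lambda(V_n)|_G \;\cong\; \bigoplus_{\mu\in\ptn} \Biggl(\sum_{\delta\in D_G} c^{\lambda}_{\mu\,\delta}\Biggr)\,\Schur^G_\mu(V_n),$$
where $c^{\lambda}_{\mu\,\delta}$ is the Littlewood--Richardson coefficient and $D_G\belongs\ptn$ is the set of partitions whose columns all have even length (if $G=\Sp$) or whose rows all have even length (if $G=\Orth$). Nonvanishing of the multiplicity on the right-hand side forces $c^{\lambda}_{\mu\,\delta}\ne 0$ for some $\delta\in D_G$. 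The Littlewood--Richardson rule guarantees that $c^{\lambda}_{\mu\,\delta}\ne 0$ requires a semistandard skew tableau of shape $\lambda/\mu$ and content $\delta$; in particular, the skew shape $\lambda/\mu$ must make sense, which amounts to $\mu\subseteq\lambda$ as Young diagrams. The three consequences $\mu_1\le\lambda_1$, $l(\mu)\le l(\lambda)$, and $\abs{\mu}\le\abs{\lambda}$ then follow immediately.

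The case $G=\GL$ is trivial: $\Schur^{\GL}_\mu=\Schur_\mu$ is irreducible as a $\GL$-representation, and Schur's lemma forces $\mu=\lambda$.

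The only (minor) obstacle is confirming that $l(\lambda)\le c[n]/2$ really does land us in the regime where Littlewood's formula holds without ``modification'' terms. This is standard for $\Sp$; for $\Orth$ the same range suffices, and in any case boundary effects can only remove summands on the right-hand side, never introduce new $\mu$'s, so the containment conclusion is robust.
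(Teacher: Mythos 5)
Your proposal is correct and follows essentially the same route as the paper, whose entire proof is a citation of the stable Littlewood restriction rules (the branching formula from $\GL$ to $\Sp$ or $\Orth$ in the range $l(\lambda)\le c[n]/2$, with multiplicities $\sum_\delta c^\lambda_{\mu\,\delta}$ over even-column/even-row $\delta$); you simply spell out the deduction $c^\lambda_{\mu\,\delta}\ne 0 \Rightarrow \mu\belongs\lambda$ and the trivial $\GL$ case explicitly. The closing ``robustness'' remark about boundary effects is unnecessary (and not obviously justified outside the stable range), but it is not needed since the hypothesis places you squarely in the stable range.
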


\begin{proof}
Use the stable Littlewood restriction rules stated in \cite{howe2005stable}*{\S1.3}.
\end{proof}

\begin{lemma}
\label{LEM:stable-tensor-branching-growth-bounds}
Let $\lambda,\mu,\rho\in \ptn$ with $l(\lambda)+l(\mu) \le c[n]/2$.
Suppose $\Hom_G(\Schur^G_\rho(V_n), \Schur_\lambda(V_n) \otimes \Schur_\mu(V_n)) \ne 0$.
Then $l(\rho) \le l(\lambda)+l(\mu)$ and $\abs{\rho} \le \abs{\lambda}+\abs{\mu}$.
\end{lemma}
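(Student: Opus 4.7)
The plan is to reduce to Lemma~\ref{LEM:stable-GL-to-G-branching-containment} via the Littlewood--Richardson rule. First, decompose the tensor product as a $\GL(V_n)$-representation:
$$\Schur_\lambda(V_n)\otimes \Schur_\mu(V_n) \cong \bigoplus_{\nu\in \ptn} c^\nu_{\lambda\mu}\,\Schur_\nu(V_n),$$
where $c^\nu_{\lambda\mu}$ is the Littlewood--Richardson coefficient. Standard combinatorics of LR tableaux (see e.g.~\cite{fulton2013representation}*{\S A.1}) forces $c^\nu_{\lambda\mu}=0$ unless $\abs{\nu}=\abs{\lambda}+\abs{\mu}$ and $l(\nu)\le l(\lambda)+l(\mu)$. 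The hypothesis $l(\lambda)+l(\mu)\le c[n]/2\le c[n]$ ensures $\Schur_\nu(V_n)\ne 0$ for every such $\nu$, so the $\GL$-decomposition is genuine.

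Restricting to $G$ and applying $\Hom_G(\Schur^G_\rho(V_n),-)$ termwise, the nonvanishing hypothesis forces $\Hom_G(\Schur^G_\rho(V_n), \Schur_\nu(V_n))\ne 0$ for some $\nu$ with $c^\nu_{\lambda\mu}>0$. Since $l(\nu)\le l(\lambda)+l(\mu)\le c[n]/2$, Lemma~\ref{LEM:stable-GL-to-G-branching-containment} applies (with $\nu$ in the role of its ``$\lambda$''), yielding $\rho\belongs \nu$.

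Finally, $\rho\belongs \nu$ immediately gives $l(\rho)\le l(\nu)\le l(\lambda)+l(\mu)$ and $\abs{\rho}\le \abs{\nu}=\abs{\lambda}+\abs{\mu}$, as desired. There is no substantive obstacle here: the sole role of the length assumption $l(\lambda)+l(\mu)\le c[n]/2$ is to keep the intermediate partitions $\nu$ within the stable range where Lemma~\ref{LEM:stable-GL-to-G-branching-containment} can be invoked.
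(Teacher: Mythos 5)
Your proposal is correct and follows essentially the same route as the paper: decompose $\Schur_\lambda\otimes\Schur_\mu$ by the Littlewood--Richardson rule, use the standard length and size constraints on the constituents $\Schur_\nu$, and then invoke Lemma~\ref{LEM:stable-GL-to-G-branching-containment} (valid since $l(\nu)\le l(\lambda)+l(\mu)\le c[n]/2$) to get $\rho\belongs\nu$ and hence the claimed bounds.
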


\begin{proof}
By the Littlewood--Richardson rule \cite{fulton2013representation}*{(A.8)} and the length properties in \cite{goodman2000representations}*{Theorem~9.2.3, (9.2.5)}
concerning the decomposition of $\Schur_\lambda \otimes \Schur_\mu$ into pieces $\Schur_\nu$,
and Lemma~\ref{LEM:stable-GL-to-G-branching-containment} on the branching of $\Schur_\nu$ from $\GL$ to $G$,
there exists $\nu\in \ptn$ such that
$\lambda,\mu,\rho \belongs \nu$,
$\abs{\nu} = \abs{\lambda} + \abs{\mu}$,
and $l(\nu) \le l(\lambda)+l(\mu)$.
\end{proof}

Lemma~\ref{LEM:stable-tensor-branching-growth-bounds} and the stable tensor rules in \cite{howe2005stable}*{\S2.1},
when combined with Axiom~S, imply that for each $\rho,\mu\in \ptn$ and $i\in \set{0,\dots,K}$,
the quantity $\tr_{\det^i \otimes \rho \otimes \mu}(n;k)$ is eventually constant as $n \to \infty$.
(Note that $c[n] \to \infty$ as $n \to \infty$,
by \eqref{INEQ:linear-rank-growth} and \eqref{INEQ:dyadic-dimension-bound-assumption}.)
Let $$T^i_{\rho \otimes \mu}(k) \defeq \lim_{n\to \infty} \tr_{\det^i \otimes \rho \otimes \mu}(n;k).$$
If $G=\Sp$, then $T^i_{\rho \otimes \mu}(k) = T^0_{\rho \otimes \mu}(k)$,
but in general the twist by $\det$ does matter.

Since $c[n] \ge 2Q$, and symmetric powers of $\GL$ correspond to partitions of length $\le 1$,
Lemma~\ref{LEM:stable-tensor-branching-growth-bounds} and \cite{howe2005stable}*{\S1.3}
imply that $m^1_\mu(\bm{y};n)$ is independent of $n$,
and furthermore that $m^1_\mu$ is supported on $\mu\in \ptn$ with $l(\mu) \le Q$.\footnote{Conversely, $m^1_\mu \ne 0$ for all $\mu\in \ptn$ with $l(\mu) \le Q$ (as can be shown by Pieri's formula, or by explicit Cauchy identities), but we will not need this fact.}
Thus for simplicity, we write
\begin{equation*}
m^1_\mu(\bm{y}) \defeq m^1_\mu(\bm{y};n).
\end{equation*}
On the other hand,
we will see that $m^0_\rho$ is supported on $\rho\in \ptn$ with $\rho_1 \le K$ (Lemma~\ref{LEM:skew-multiplicity-formula}(1)),
and that $m^0_\rho(\bm{x};n)$ depends on $n$ in general.

\begin{lemma}
[Ratio trace stability]
\label{LEM:ratio-trace-stability}

Let $\rho,\mu\in \ptn$ with $l(\mu) \le Q$.
Let $k\ge 0$ and $n \ge n_k$ with $c[n] > c[n_k] + 4Q$.
Then the following statements hold:
\begin{enumerate}
    
    \item If $K\ge 1$, $\tr_{\rho \otimes \mu}(n;k) \ne 0$, and $l(\rho) \le c[n]/2$,
    then $$l(\rho) \le c[n_k]/2 + Q < c[n]/2 - Q.$$

    \item If $K\ge 1$, $\tr_{\rho \otimes \mu}(n;k) \ne 0$, and $l(\rho) > c[n]/2$,
    then $G=\Orth$, $\rho^\prime_1+\rho^\prime_2 \le c[n]$ (where $\rho^\prime\in \ptn$ is the conjugate of $\rho$),
    and $$c[n]-l(\rho) \le c[n_k]/2 + Q < c[n]/2 - Q.$$
    
    \item Suppose $l(\rho) \le c[n]/2 - Q$.
    Then $$\tr_{\rho \otimes \mu}(n;k)
    = T^0_{\rho \otimes \mu}(k).$$

    \item Suppose $K\ge 1$, $G=\Orth$, $\rho^\prime_1+\rho^\prime_2 \le c[n]$,
    and $l(\rho) \ge c[n]/2 + Q$.
    Then $$\tr_{\rho \otimes \mu}(n;k)
    = T^1_{\rho^\ast(n) \otimes \mu}(k),$$
    where $\rho^\ast(n)\in \ptn$ is the $\Orth$-associate of $\rho$ (with $\Schur^G_{\rho^\ast(n)} \defeq \det \otimes \Schur^G_\rho$; $\rho^\ast(n)$ matches $\rho$ in all columns except the first, which has length $l(\rho^\ast(n)) = c[n]-l(\rho)$).
\end{enumerate}
\end{lemma}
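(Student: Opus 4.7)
The plan is to decompose the tensor product $\Schur^G_\rho(V_n)\otimes\Schur^G_\mu(V_n)\cong\bigoplus_\nu m_\nu \Schur^G_\nu(V_n)$ into irreducible $G$-pieces and apply Axiom~S (with $i=0$ or $1$) to each $\tr_\nu(n;k)$. Two representation-theoretic inputs are central. First, Howe's stable tensor rule \cite{howe2005stable}*{\S2.1} combined with Lemma~\ref{LEM:stable-tensor-branching-growth-bounds} asserts that in the stable range $l(\rho)+l(\mu)\le c[n]/2$ the multiplicities $m_\nu$ are independent of $n$ and supported on $\nu$ with $l(\nu)\le l(\rho)+l(\mu)$. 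Second, representations of $G\in\{\Sp,\Orth\}$ are self-dual, so $m_\nu=\dim\Hom_G(\Schur^G_\rho,\Schur^G_\nu\otimes\Schur^G_\mu)$; running Lemma~\ref{LEM:stable-tensor-branching-growth-bounds} with the roles of $\rho$ and $\nu$ swapped yields the reverse bound $l(\rho)\le l(\nu)+l(\mu)$ whenever $m_\nu\ne 0$ and $l(\nu)+l(\mu)\le c[n]/2$.

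For part (3), the hypothesis $l(\rho)\le c[n]/2-Q$ places us in the stable range, so $\tr_{\rho\otimes\mu}(n;k)=\sum_\nu m_\nu \tr_\nu(n;k)$ with each relevant $\nu$ satisfying $l(\nu)\le c[n]/2$. Axiom~S with $i=0$ replaces each $\tr_\nu(n;k)$ by $T^0_\nu(k)$ for $n\ge n_k$, and the sum reassembles as $T^0_{\rho\otimes\mu}(k)$. For part (1), $l(\rho)\le c[n]/2$ and $\tr_{\rho\otimes\mu}(n;k)\ne 0$ produce some $\nu$ with $m_\nu\ne 0$ and $\tr_\nu(n;k)\ne 0$. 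For $G=\Sp$ such $\nu$ automatically has $l(\nu)\le c[n]/2$; for $G=\Orth$ the unstable summands are $\det$-twists and fall under part (2). In either case we may take $l(\nu)\le c[n]/2$, so Axiom~S forces $l(\nu)\le c[n_k]/2$, and the reverse length bound yields $l(\rho)\le l(\nu)+l(\mu)\le c[n_k]/2+Q$.

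For parts (2) and (4), $G=\Orth$ and $l(\rho)>c[n]/2$. The identity $\Schur^\Orth_\rho(V_n)=\det(V_n)\otimes\Schur^\Orth_{\rho^\ast(n)}(V_n)$, valid under $\rho^\prime_1+\rho^\prime_2\le c[n]$, rewrites
$$\Schur^\Orth_\rho(V_n)\otimes\Schur^\Orth_\mu(V_n)\cong\det(V_n)\otimes\bigl(\Schur^\Orth_{\rho^\ast(n)}(V_n)\otimes\Schur^\Orth_\mu(V_n)\bigr),$$
with $l(\rho^\ast(n))=c[n]-l(\rho)<c[n]/2$. We now repeat the previous analysis applied to $\rho^\ast(n)$ but invoke Axiom~S at $i=1$. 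Part (4)'s hypothesis $l(\rho)\ge c[n]/2+Q$ places $\rho^\ast(n)$ in the stable range, yielding $\tr_{\rho\otimes\mu}(n;k)=T^1_{\rho^\ast(n)\otimes\mu}(k)$; part (2) follows exactly as in part (1), with $\rho^\ast(n)$ replacing $\rho$, giving $c[n]-l(\rho)=l(\rho^\ast(n))\le c[n_k]/2+Q$.

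The main obstacle is a careful treatment of the boundary band $l(\rho)\in(c[n]/2-Q,\,c[n]/2]$ (and its $\Orth$-mirror), where Howe's stable rule does not directly furnish an $n$-independent decomposition. The hypothesis $c[n]>c[n_k]+4Q$ forces any $\rho$ satisfying part (1)'s conclusion to lie strictly in $[0,\,c[n]/2-Q)$, so verifying the vanishing $\tr_{\rho\otimes\mu}(n;k)=0$ throughout the boundary band amounts to showing that every $\Schur^G_\nu$ summand with $l(\nu)\in(c[n_k]/2,\,c[n]/2]$ either appears with multiplicity zero or contributes zero trace. This is automatic for $G=\Sp$ via $\Schur^{\Sp}_\nu(V_n)=0$ outside the stable range, and for $G=\Orth$ it reduces via the $\det$-twist reflection to boundary summands covered by part (2); writing out the modification rules cleanly is where the bookkeeping is most delicate.
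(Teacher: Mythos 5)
Your proposal is correct and takes essentially the same route as the paper: pick an irreducible summand with nonzero trace, apply Axiom~S directly (or, for the long-length $\Orth$ summands, after twisting by $\det$ and using $i=1$), and combine self-duality with Lemma~\ref{LEM:stable-tensor-branching-growth-bounds} to get the reverse length bound, with the stable tensor rule of \cite{howe2005stable}*{\S2.1} supplying the $n$-independent decompositions needed for (3)--(4). Your closing concern about delicate modification-rule bookkeeping in the boundary band $l(\rho)\in(c[n]/2-Q,\,c[n]/2]$ is unnecessary: the argument you already gave for (1)--(2) never needs stable multiplicities or explicit modification rules, since Axiom~S forces any contributing summand (or its $\det$-twist) to have length $\le c[n_k]/2$, and the reverse bound then pins down $l(\rho)$ (or $c[n]-l(\rho)$), exactly as in the paper's proof.
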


\begin{proof}

(1)--(2):
Say $K\ge 1$.
Then $G\in \set{\Sp,\Orth}$.
Also suppose $\tr_{\rho \otimes \mu}(n;k) \ne 0$,
with no assumption yet on $l(\rho)$.
Let $\lambda\in \ptn$ with $\tr_\lambda(n;k) \ne 0$ and $\Hom_G(\Schur^G_\lambda(V_n), \Schur^G_\rho(V_n) \otimes \Schur^G_\mu(V_n)) \ne 0$.

\emph{Case~1: $l(\lambda) \le c[n]/2$.}
Then by Axiom~S, $\tr_\lambda(n;k) \ne 0$ implies $l(\lambda) \le c[n_k]/2$.
Thus $$l(\lambda)+l(\mu) \le c[n_k]/2 + Q < c[n]/2 - Q.$$
But by self-duality, $$\Hom_G(\Schur^G_\lambda, \Schur^G_\rho \otimes \Schur^G_\mu)
= (\Schur^G_\lambda \otimes \Schur^G_\rho \otimes \Schur^G_\mu)^G
= \Hom_G(\Schur^G_\rho, \Schur^G_\lambda \otimes \Schur^G_\mu)
\belongs \Hom_G(\Schur^G_\rho, \Schur_\lambda \otimes \Schur_\mu),$$
so by Lemma~\ref{LEM:stable-tensor-branching-growth-bounds}, $l(\rho) \le l(\lambda)+l(\mu)$.
So (1) holds and (2) is vacuous.

\emph{Case~2: $l(\lambda) > c[n]/2$.}
Then $G=\Orth$, since $\Schur^G_\lambda(V_n) \ne 0$ (since $\tr_\lambda(n;k) \ne 0$).
Now $\lambda^\prime_1+\lambda^\prime_2 \le c[n]$ and $\rho^\prime_1+\rho^\prime_2 \le c[n]$, since $\Schur^G_\lambda(V_n) \ne 0$ and $\Schur^G_\rho(V_n) \ne 0$.
Also $l(\lambda^\ast(n)) = c[n] - l(\lambda) < c[n]/2$.
By Axiom~S, $\tr_\lambda(n;k) \ne 0$ implies $l(\lambda^\ast(n)) \le c[n_k]/2$.
But by tensoring $\lambda$ and $\rho$ by $\det$, we have $\Hom_G(\Schur^G_{\lambda^\ast(n)}(V_n), \Schur^G_{\rho^\ast(n)}(V_n) \otimes \Schur^G_\mu(V_n)) \ne 0$,
so arguing as in Case~1, we get
$$l(\rho^\ast(n)) \le l(\lambda^\ast(n))+l(\mu) \le c[n_k]/2 + Q < c[n]/2 - Q.$$
In particular, $l(\rho) > c[n]/2 + Q$.
So (2) holds and (1) is vacuous.

(3):
The assumptions imply $l(\rho)+l(\mu) \le c[n]/2$.
So by Lemma~\ref{LEM:stable-tensor-branching-growth-bounds} and \cite{howe2005stable}*{\S2.1},
we may decompose $\Schur^G_\rho \otimes \Schur^G_\mu$ into pieces $\Schur^G_\nu$ with $l(\nu) \le l(\rho)+l(\mu)$, with multiplicities independent of $n$.
Axiom~S now implies (3).

(4):
$l(\rho^\ast(n))+l(\mu) \le c[n]/2$.
So by Lemma~\ref{LEM:stable-tensor-branching-growth-bounds} and \cite{howe2005stable}*{\S2.1},
we obtain a stable decomposition of $\Schur^G_{\rho^\ast(n)} \otimes \Schur^G_\mu$ into pieces $\Schur^G_\nu$ with $l(\nu) \le c[n]/2$.
Axiom~S now implies (4).
\end{proof}


Let $n_{k,2} \defeq \min\set{n \ge n_k: d(n)/\Crank > \Crank d(n_k) + 4Q+2}$.
Then $n_{k,2} > n_k \ge 1$, so by \eqref{INEQ:dyadic-dimension-bound-assumption},
\begin{equation}
\label{INEQ:bound-dnk2}
d(n_{k,2})
\le 2d(n_{k,2}-1)
\le 2\Crank (\Crank d(n_k) + 4Q+2).
\end{equation}
Furthermore, \eqref{INEQ:linear-rank-growth} implies
\begin{equation}
\label{INEQ:bound-cnk2}
c[n]
\ge d(n)/\Crank - 2
> \Crank d(n_k) + 4Q
\ge c[n_k] + 4Q
\end{equation}
for all $n\ge n_{k,2}$,
since $d(n)$ is weakly increasing in $n$ by \eqref{INEQ:dyadic-dimension-bound-assumption}.

\begin{lemma}
[Stable trace bounds]
\label{LEM:stable-trace-bounds}
Let $\rho,\mu\in \ptn$ with $\rho_1 \le K$ and $l(\mu) \le Q$.
Let $k\ge 0$ and $i\in \set{0,1}$ with $i\le K$.
Then the following statements hold:
\begin{enumerate}
    \item If $T^i_{\rho \otimes \mu}(k) \ne 0$,
    then $l(\rho) \le c[n_k]/2 + Q$.

    \item $\abs{T^i_{\rho \otimes \mu}(k)}
    \le q^{-k/2} \Cbetti^{d(n)} (\Cbetti')^{\abs{\mu}} \chi^G_\rho \chi^G_\mu(I_{c[n]})$
    for all $n\ge n_{k,2}$.
\end{enumerate}
\end{lemma}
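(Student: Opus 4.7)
My plan is to deduce both parts from Lemma~\ref{LEM:ratio-trace-stability}, together with Deligne's purity theorem and Axiom~B. The unifying fact is that, by definition, $\tr_{\det^i\otimes\rho\otimes\mu}(n;k) = T^i_{\rho\otimes\mu}(k)$ for all sufficiently large $n$, so one may freely pick an $n$ sitting inside the stable regime of Lemma~\ref{LEM:ratio-trace-stability}.

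For part~(1), if $K=0$ then $\rho_1 \le 0$ forces $\rho=\emptyset$ and the bound is trivial. Otherwise $K\ge 1$ and $G\in\set{\Sp,\Orth}$. I would choose $n$ large enough that $T^i_{\rho\otimes\mu}(k) = \tr_{\det^i\otimes\rho\otimes\mu}(n;k)$ and $l(\rho) < c[n]/2$. If $G=\Sp$ (so $\det(V_n) = 1$) or $i=0$, the determinant twist is trivial, and Lemma~\ref{LEM:ratio-trace-stability}(1) applied to $\rho$ yields $l(\rho)\le c[n_k]/2+Q$. If $G=\Orth$ and $i=1$, I would absorb the twist into the associate: $\det(V_n)\otimes\Schur^G_\rho(V_n) \cong \Schur^G_{\rho^\ast(n)}(V_n)$ with $l(\rho^\ast(n)) = c[n]-l(\rho) > c[n]/2$, so Lemma~\ref{LEM:ratio-trace-stability}(2) applied to $\rho^\ast(n)$ gives $l(\rho) = c[n]-l(\rho^\ast(n)) \le c[n_k]/2+Q$.

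For part~(2), assume $T^i_{\rho\otimes\mu}(k)\ne 0$ (else trivial). Part~(1) combined with \eqref{INEQ:bound-cnk2} places us in the stable range $l(\rho)\le c[n]/2-Q$ for every $n\ge n_{k,2}$. Lemma~\ref{LEM:ratio-trace-stability}(3) then identifies $T^i_{\rho\otimes\mu}(k) = \tr_{\rho\otimes\mu}(n;k)$ whenever the twist is trivial, while in the $G=\Orth$, $i=1$ case Lemma~\ref{LEM:ratio-trace-stability}(4) applied to $\rho^\ast(n)$ (using the involutivity $(\rho^\ast(n))^\ast(n)=\rho$) identifies $T^1_{\rho\otimes\mu}(k) = \tr_{\rho^\ast(n)\otimes\mu}(n;k)$. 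Either way, $T^i_{\rho\otimes\mu}(k)$ is the Frobenius trace on $H_k(\mcal{P}_n\otimes\ol{\FF}_q, \Schur^G_{\rho'}(V_n)\otimes\Schur^G_\mu(V_n))$ for some $\rho'\in\set{\rho,\rho^\ast(n)}$, and Deligne's purity (given the normalization $H_k = H_c^{2\dim X - k}(-, \cdot\,(\dim X))$) bounds this trace in absolute value by $q^{-k/2}\dim H_k$. Axiom~B applied to $\Schur^G_{\rho'}\otimes\Schur^G_\mu$ then bounds $\dim H_k$ by $\Cbetti^{d(n)}(\Cbetti')^{\abs{\mu}}\chi^G_\rho\chi^G_\mu(I_{c[n]})$, completing the proof.

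The main bookkeeping subtlety is confirming that Axiom~B's hypothesis on the first-row length survives the passage to the associate when $G=\Orth$ and $i=1$: since $\rho^\ast(n)$ agrees with $\rho$ in every column past the first, one has $(\rho^\ast(n))_1 = \rho_1 \le K$. The rank identity $\dim(\det^i\otimes\Schur^G_\rho\otimes\Schur^G_\mu) = \chi^G_\rho\chi^G_\mu(I_{c[n]})$ is immediate as $\det$ is one-dimensional. I expect this to be the only real subtlety; the rest is a mechanical invocation of Lemma~\ref{LEM:ratio-trace-stability} and Axiom~B.
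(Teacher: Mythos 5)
Your proof is correct and follows essentially the same route as the paper's: identify $T^i_{\rho\otimes\mu}(k)$ with an honest trace $\tr_{\rho'\otimes\mu}(n;k)$, $\rho'\in\set{\rho,\rho^\ast(n)}$, in the stable range via Lemma~\ref{LEM:ratio-trace-stability} (passing to the $\Orth$-associate when $i=1$), then invoke Deligne purity plus Axiom~B, using $\chi^G_{\rho^\ast(n)}(I_{c[n]})=\chi^G_\rho(I_{c[n]})$. The only micro-quibble is that $(\rho^\ast(n))_1=\rho_1$ fails when $\rho=\emptyset$ (it is then $1$), but since $i=1$ forces $K\ge 1$ the hypothesis $(\rho^\ast(n))_1\le K$ of Axiom~B still holds, so nothing breaks.
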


\begin{proof}
First say $i=0$.
By Lemma~\ref{LEM:ratio-trace-stability}(3),
we have $T^0_{\rho \otimes \mu}(k) = \tr_{\rho \otimes \mu}(n;k)$ for all $n\ge n_{k,2}$ with $l(\rho) \le c[n]/2 - Q$.
Now (1) follows from Lemma~\ref{LEM:ratio-trace-stability}(1) if $K\ge 1$,
and from $\rho_1 \le K$ if $K=0$.
Finally, (2) follows from (1) if $l(\rho) > c[n_k]/2 + Q$,
and from \eqref{INEQ:Deligne-plus-Betti-bound} if $l(\rho) \le c[n_k]/2 + Q$.

Now suppose $i=1$.
Then $K\ge 1$, so $G\in \set{\Sp,\Orth}$.
If $G=\Sp$ then $T^1 = T^0$ (which we have already treated), so we may assume $G=\Orth$.
For all $n\ge n_{k,2}$ with $l(\rho) \le c[n]/2 - Q$,
Lemma~\ref{LEM:ratio-trace-stability}(4) (with $\rho^\ast(n)$ in place of $\rho$) implies $T^1_{\rho \otimes \mu}(k) = \tr_{\rho^\ast(n) \otimes \mu}(n;k)$.
Now Lemma~\ref{LEM:ratio-trace-stability}(2) implies (1).
Finally, (2) follows from (1) and \eqref{INEQ:Deligne-plus-Betti-bound} as before,
since $\chi^G_{\rho^\ast(n)}(I_{c[n]}) = \chi^G_\rho(I_{c[n]})$.
\end{proof}

\begin{definition}
[Stable skew multiplicity]
\label{DEFN:stable-skew-multiplicity}
Let $\rho\in \ptn$ with $\rho_1 \le K$.
If $G=\Sp$, let
\begin{equation*}
M^0_\rho(\bm{x};n)
\defeq \frac{(x_1\cdots x_K)^{c[n]/2+K}
\det(x_j^{\rho^\prime_i-c[n]/2-i} - x_j^{-(\rho^\prime_i-c[n]/2-i)})_{1\le i,j\le K}}
{D_K(\bm{x})}
\in \ZZ[x_1^{\pm 1},\dots,x_K^{\pm 1}],
\end{equation*}
where
\begin{equation*}
D_K(\bm{x})\defeq \prod_{1\le i<j\le K} (x_i-x_j)(x_ix_j-1) \prod_{1\le i\le K} (1-x_i^2).
\end{equation*}
If $K=0$, let $M^0_\rho(\bm{x};n) \defeq 1$.
If $G=\Orth$ and $K=1$, let
\begin{equation*}
M^0_\rho(\bm{x};n)
\defeq x_1^{l(\rho)}.
\end{equation*}
For later convenience, also let $M^0_\rho(\bm{x};n;0) \defeq M^0_\rho(\bm{x};n)$ and
\begin{equation*}
M^0_\rho(\bm{x};n;1) \defeq
\begin{cases}
    x_1^{c[n]-l(\rho)} & \textnormal{if }(G,K)=(\Orth,1), \\
    0 & \textnormal{otherwise}.
\end{cases}
\end{equation*}

\end{definition}

\begin{lemma}
[Skew multiplicity formula]
\label{LEM:skew-multiplicity-formula}
Let $\rho\in \ptn$.
\begin{enumerate}
    \item If $\rho_1>K$, then $m^0_\rho(\bm{x};n) = 0$.

    \item Suppose $\rho_1\le K$ and $l(\rho) \le c[n]/2$.
    Then $m^0_\rho(\bm{x};n) = M^0_\rho(\bm{x};n;0) \ne 0$.
    If moreover $G=\Orth$ and $K=1$, then $m^0_{\rho^\ast(n)}(\bm{x};n) = M^0_\rho(\bm{x};n;1) \ne 0$.
\end{enumerate}
\end{lemma}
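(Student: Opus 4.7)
The plan is to split into (1) the support bound $\rho_1\le K$ and (2) the explicit formula, treating (2) by cases on $(G,K)$. For (1), I would apply the dual Cauchy identity
\[\prod_{i=1}^K \det(1+x_iA) = \sum_{\lambda:\,l(\lambda)\le K} s_\lambda(\bm{x})\,\chi^{\GL}_{\lambda'}(A),\]
so that every $\GL$-irrep $\Schur^{\GL}_{\lambda'}(V_n)$ appearing has $(\lambda')_1 = l(\lambda)\le K$. Branching down to $G$, any summand $\Schur^G_\rho$ of $\Schur^{\GL}_{\lambda'}|_G$ satisfies $\rho_1\le (\lambda')_1\le K$: in the stable range this is immediate from Lemma~\ref{LEM:stable-GL-to-G-branching-containment}, and in the unstable range the Newell--King modification rules only cancel contributions without ever raising the first row. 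Hence $m^0_\rho(\bm{x};n)=0$ whenever $\rho_1>K$.

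For (2), the $K=0$ and $G=\GL$ cases are immediate. The case $(G,K)=(\Orth,1)$ rests on the observation that for symmetric bilinear forms the symmetry/antisymmetry conflict makes every contraction $\bigwedge^d V_n\to\bigwedge^{d-2}V_n$ vanish, so $\bigwedge^d V_n = \Schur^{\Orth}_{(1^d)}(V_n)$ is already irreducible as an $\Orth$-representation. Thus
\[\det(1+xA) = \sum_{d=0}^{c[n]} x^d\,\chi^{\Orth}_{(1^d)}(A),\]
and matching $\rho\leftrightarrow(1^d)$ in the stable range $d\le c[n]/2$, or $\rho^\ast(n)\leftrightarrow(1^d)$ in the unstable range $d>c[n]/2$ (via $\bigwedge^d V_n\cong\det(V_n)\otimes\bigwedge^{c[n]-d}V_n$), produces both formulas $m^0_\rho(x;n)=x^{l(\rho)}$ and $m^0_{\rho^\ast(n)}(x;n)=x^{c[n]-l(\rho)}$.

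The substantive case is $G=\Sp$ with $K\ge 1$. My plan is to identify $M^0_\rho(\bm{x};n)$ with a shifted $\Sp_{2K}$ Weyl character. A direct determinant computation gives $D_K(\bm{x})=(-1)^K(x_1\cdots x_K)^K\,\Delta_{\Sp_{2K}}(\bm{x})$, where $\Delta_{\Sp_{2K}}$ is the standard Weyl denominator; substituting this into Definition~\ref{DEFN:stable-skew-multiplicity} and reversing the row index $i\mapsto K+1-i$ yields
\[M^0_\rho(\bm{x};n) = (-1)^{K(K-1)/2}\,(x_1\cdots x_K)^{c[n]/2}\,\chi^{\Sp_{2K}}_{\nu}(\bm{x}),\qquad \nu_i\defeq \tfrac{c[n]}{2} - \rho'_{K+1-i}.\]
The identity $m^0_\rho = M^0_\rho$ then reduces to the symplectic Cauchy identity, which I would establish either by Weyl-integration-based orthogonality of $\chi^\Sp_\rho$ over $\USp(c[n])$ combined with the Weyl character formula, or conceptually by skew Howe duality for $\bigwedge^\bullet(\CC^K\otimes V_n)$ under $(\Sp_{2K},\Sp_{2r})$ as in \cite{howe1995perspectives}. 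I expect the main obstacle to be the combinatorial bookkeeping, namely pinning down $\nu$ precisely and tracking signs; non-vanishing $M^0_\rho\ne 0$ is automatic since $\nu$ is a valid $\Sp_{2K}$-dominant weight and $\chi^{\Sp_{2K}}_\nu$ is a nonzero irreducible character.
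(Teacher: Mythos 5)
Your proposal is correct and, in substance, the same as the paper's proof: the $(\Orth,1)$ case rests on the exterior powers $\bigwedge^d V_n$ being (pairwise non-isomorphic) irreducible $\Orth$-representations equal to $\Schur^{\Orth}_{(1^d)}(V_n)$, and the $\Sp$ case rests on skew Howe duality (equivalently the symplectic dual Cauchy identity), which identifies $m^0_\rho$ with $(x_1\cdots x_K)^{c[n]/2}\chi^{\Sp_{2K}}_{(c[n]/2-\rho'_K,\dots,c[n]/2-\rho'_1)}$, converted to the determinantal form of $M^0_\rho$ via the Weyl character and denominator formulas. The only real deviation is your treatment of part (1) by the $\GL$ dual Cauchy identity plus branching with modification rules in the unstable range; the paper instead reads off the support directly from skew Howe duality (for $\Sp$, giving $\rho\belongs (K^{c[n]/2})$) and from the explicit expansions in the other cases, which avoids having to justify the informal claim that the modification rules never increase the first row.
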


\begin{proof}

\emph{Case~1: $K=0$.}
Then (1) is clear, since $m^0_\rho$ is supported on $\rho=\emptyset$.
In (2) we have $\rho=\emptyset$ (since $\rho_1\le 0$),
so $m^0_\rho(\bm{x};n) = 1 = M^0_\rho(\bm{x};n;0)$.

\emph{Case~2: $(G,K)=(\Orth,1)$.}
By \cite{goodman2009symmetry}*{Corollary 5.5.6}, the $G$-representations $\bigwedge^i \CC^{c[n]}$ are irreducible and pairwise non-isomorphic for $0\le i\le c[n]$.
But for $0\le i\le c[n]$ we have $\Schur^G_{(1^i)} \CC^{c[n]} \ne 0$ and $\Schur_{(1^i)} \CC^{c[n]} = \bigwedge^i \CC^{c[n]}$, and thus $\Schur^G_{(1^i)} \CC^{c[n]} = \bigwedge^i \CC^{c[n]}$.
Now (1) is clear, and in (2) we have
$m^0_\rho(\bm{x};n) = x_1^{l(\rho)} = M^0_\rho(\bm{x};n;0)$
and $m^0_{\rho^\ast(n)}(\bm{x};n) = x_1^{l(\rho^\ast(n))} = M^0_\rho(\bm{x};n;1)$.

\emph{Case~3: $G=\Sp$.}
Skew Howe duality for $\Sp$ (see \cite{bump2006averages}*{Remark~7 and references within}) implies the following:
$m^0_\rho = 0$ unless $\rho \belongs (K^{c[n]/2})$, in which case
\begin{equation*}
m^0_\rho(\bm{x};n)
= (x_1\cdots x_K)^{c[n]/2}
\chi^G_{(c[n]/2-\rho^\prime_K,\dots,c[n]/2-\rho^\prime_1)}(x_1^{\pm 1},\dots,x_K^{\pm 1})
\ne 0.
\end{equation*}
By the Weyl character formula for $\Sp$ \cite{bump2006averages}*{(43)}, the last character is
\begin{equation*}
\frac{\det(x_j^{c[n]/2-\rho^\prime_{K-i+1}+K-i+1} - x_j^{-(c[n]/2-\rho^\prime_{K-i+1}+K-i+1)})_{1\le i,j\le K}}
{\det(x_j^{K-i+1} - x_j^{-(K-i+1)})_{1\le i,j\le K}}
= \frac{\det(x_j^{\rho^\prime_i-c[n]/2-i} - x_j^{-(\rho^\prime_i-c[n]/2-i)})_{1\le i,j\le K}}
{\det(x_j^{-i} - x_j^i)_{1\le i,j\le K}}
\end{equation*}
(where to equate the two fractions, we swap rows $i$ and $K-i+1$ in the $K\times K$ determinants, and multiply each column by $-1$).
By the Weyl denominator formula for $\Sp$ \cite{bump2006averages}*{(44)},
$$\det(x_j^{-i} - x_j^i)_{1\le i,j\le K}
= (x_1\cdots x_K)^{-K} D_K(\bm{x}).$$
This completes the proof of (1)--(2).
\end{proof}

\begin{lemma}
[Stable multiplicity bound]
\label{LEM:stable-multiplicity-bound}
Let $\rho\in \ptn$ with $\rho_1 \le K$.
Then $$\abs{M^0_\rho(\bm{x};n;0)}
\le \abs{x_1\cdots x_K}^{c[n]/2}
\prod_{1\le i\le K} (\abs{x_i}+\abs{x_i}^{-1}+2)^{c[n]/2 + l(\rho)}.$$
If $(G,K)=(\Orth,1)$, then
$\abs{M^0_\rho(\bm{x};n;1)}$ is also at most the quantity on the right.
\end{lemma}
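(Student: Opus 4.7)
The plan is to handle the three cases $K=0$, $(G,K)=(\Orth,1)$, and $G=\Sp$ separately, with the main content in the last.

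The case $K=0$ reduces to $1\le 1$. For $(G,K)=(\Orth,1)$, the closed forms $M^0_\rho(\bm{x};n;0)=x_1^{l(\rho)}$ and $M^0_\rho(\bm{x};n;1)=x_1^{c[n]-l(\rho)}$ make this a one-variable check: splitting on $|x_1|\ge 1$ versus $|x_1|\le 1$, the inequality $|x_1|+|x_1|^{-1}+2 \ge \max(|x_1|,|x_1|^{-1})$ shows that the right side dominates the left in each case.

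For $G=\Sp$, when $l(\rho)\le c[n]/2$, Lemma~\ref{LEM:skew-multiplicity-formula}(2) identifies $M^0_\rho(\bm{x};n;0)$ with $m^0_\rho(\bm{x};n)$. I would invoke orthonormality of $\{\chi^{\Sp}_\rho\}$ in $L^2(\USp(c[n]))$: since the decomposition $\prod_i \det(1+x_iA) = \sum_\rho m^0_\rho(\bm{x};n)\chi^{\Sp}_\rho(A)$ holds for each $A$, multiplying by $\overline{\chi^{\Sp}_\rho(A)}$ and integrating yields
\begin{equation*}
m^0_\rho(\bm{x};n) = \int_{\USp(c[n])}\prod_{i=1}^{K}\det(1+x_iA)\,\overline{\chi^{\Sp}_\rho(A)}\,dA.
\end{equation*}
Cauchy--Schwarz with $\|\chi^{\Sp}_\rho\|_2 = 1$ and Haar measure $1$ then gives $|m^0_\rho(\bm{x};n)| \le \sup_{A\in \USp(c[n])}|\prod_i\det(1+x_iA)|$. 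For $A\in \USp(c[n])$, paired unit-modulus eigenvalues $\{\alpha_j^{\pm 1}\}_{j=1}^{c[n]/2}$ yield
\begin{equation*}
\det(1+x_iA) = x_i^{c[n]/2}\prod_{j=1}^{c[n]/2}(x_i+x_i^{-1}+\alpha_j+\alpha_j^{-1}),
\end{equation*}
and $|\alpha_j+\alpha_j^{-1}|\le 2$ gives $|\det(1+x_iA)|\le|x_i|^{c[n]/2}(|x_i|+|x_i|^{-1}+2)^{c[n]/2}$. Multiplying over $i$ and using $(|x_i|+|x_i|^{-1}+2)^{l(\rho)}\ge 1$ completes this subcase with room to spare.

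For $l(\rho)>c[n]/2$ with $G=\Sp$, I would reduce via Weyl-group symmetries of the determinant in Definition~\ref{DEFN:stable-skew-multiplicity}: row permutations and sign-flips of the exponents $a_i=\rho^\prime_i-c[n]/2-i$ change $M^0_\rho(\bm{x};n;0)$ only by $\pm 1$, and either two $|a_i|$'s coincide (in which case $M^0_\rho = 0$) or we obtain $\pm(x_1\cdots x_K)^{c[n]/2}\chi^{\Sp}_{\tau'}(\bm{x}^{\pm 1})$ for a partition $\tau'$ with $l(\tau')\le K$, viewed as a character of $\Sp_{2K}$. I would then bound this character by a parallel sup-norm estimate on $\USp(2K)$, absorbing any residual growth into the generous exponent $c[n]/2+l(\rho)$ in the target bound. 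The main ``obstacle'' is really just setting up the integration identity cleanly; once in place, the sup-norm estimate $|1+x\alpha|\le 1+|x|$ is elementary and already yields a bound stronger than claimed.
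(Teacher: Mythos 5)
Your cases $K=0$ and $(G,K)=(\Orth,1)$ are correct and essentially identical to the paper's, and your treatment of $G=\Sp$ when $l(\rho)\le c[n]/2$ is a valid alternative route: instead of the paper's positivity-of-coefficients bound for symplectic Schur polynomials, you extract $m^0_\rho(\bm{x};n)$ by orthogonality of characters over $\USp(c[n])$ and bound the generating function pointwise; this is legitimate because in $\det(1+x_iA)$ the unitarity lives in $A$, not in $\bm{x}$, so arbitrary complex $x_i$ cause no trouble there.

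The gap is in the subcase $G=\Sp$, $l(\rho)>c[n]/2$, and it is not a vacuous case: in the application (Lemma~\ref{LEM:stable-gap-bounds}) the partitions range over $\rho\belongs (K^{c[n_k]/2+Q})$ with $k$ unbounded, so $c[n_k]$ can far exceed $c[n]$ and $l(\rho)>c[n]/2$ genuinely occurs. Your reduction of $M^0_\rho(\bm{x};n;0)$ to $\pm(x_1\cdots x_K)^{c[n]/2}\chi^{\Sp}_{\tau'}(x_1^{\pm1},\dots,x_K^{\pm1})$ with $\tau'\belongs(N^K)$, $N=u_K-K\le c[n]/2+l(\rho)$, matches the paper, but the proposed ``parallel sup-norm estimate on $\USp(2K)$'' does not bound this quantity: the tuple $(x_1^{\pm1},\dots,x_K^{\pm1})$ is a unitary conjugacy class only when all $\abs{x_i}=1$, and off the unit torus an irreducible character is not controlled by its supremum over the compact form. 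For $\Sp_2$, for instance, $\chi_m(\operatorname{diag}(x,x^{-1}))=x^m+x^{m-2}+\dots+x^{-m}$ has size about $\abs{x}^m$, exponentially larger than the dimension $m+1$, and in the intended application $\abs{x_i}$ is as large as $q^\beta$ with $\abs{\tau'}$ of order $K(c[n]/2+l(\rho))$ — so ``absorbing the residual growth into the exponent'' is exactly the statement that has to be proven, not a bookkeeping step. Two repairs are available: (i) the paper's argument, which uses the fact that $\chi^{\Sp}_{\tau'}(x_1^{\pm1},\dots,x_K^{\pm1})$ is a Laurent polynomial with nonnegative coefficients (symplectic tableaux), giving $\abs{\chi^{\Sp}_{\tau'}(x^{\pm1})}\le \prod_{1\le i\le K}(\abs{x_i}+\abs{x_i}^{-1}+2)^N$; or (ii) rerun your own orthogonality trick at the auxiliary rank $2N$ rather than $2K$: by skew Howe duality, $(x_1\cdots x_K)^{N}\chi^{\Sp}_{\tau'}(x^{\pm1})$ is again a coefficient in the expansion of $\prod_i\det(1+x_iB)$ over $B\in\USp(2N)$, and your sup-norm bound there yields precisely the claimed inequality. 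As written, however, the $l(\rho)>c[n]/2$ case is unproven.
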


\begin{proof}
\emph{Case~1: $K=0$.}
Here $\abs{M^0} \le 1$, by Definition~\ref{DEFN:stable-skew-multiplicity}.
This suffices.

\emph{Case~2: $(G,K)=(\Orth,1)$.}
Use Definition~\ref{DEFN:stable-skew-multiplicity} and note that
$$\abs{x_1}^d \le \abs{x_1}^{c[n]/2} \max(\abs{x_1},\abs{x_1}^{-1})^{c[n]/2 + l(\rho)}$$
for all $d\in [-l(\rho), c[n]+l(\rho)]$.
Taking $d\in \set{l(\rho), c[n]-l(\rho)}$ gives the result.

\emph{Case~3: $G=\Sp$.}
Sort the absolute values $\abs{\rho^\prime_i-c[n]/2-i}$ for $1\le i\le K$
into a non-decreasing list of nonnegative integers, $u_1\le \cdots \le u_K$.
By Definition~\ref{DEFN:stable-skew-multiplicity},
the ratio $\abs{M^0_\rho(\bm{x};n;0) / (x_1\cdots x_K)^{c[n]/2}}$
vanishes unless $0<u_1<\dots<u_K$,
in which case it equals $\abs{\chi^G_{(u_K-K,\dots,u_1-1)}(x_1^{\pm 1},\dots,x_K^{\pm 1})}$ by the Weyl character formula.
By \cite{bump2006averages}*{Remark~7, (50)}
and the fact that symplectic Schur polynomials have \emph{nonnegative} integer coefficients (with a combinatorial interpretation; see \cite{sundaram1990tableaux}*{Theorem~2.3} or \cite{krattenthaler1998identities}*{(A.3)--(A.4)}),
we have
\begin{equation*}
\abs{\chi^G_{\lambda}(x_1^{\pm 1},\dots,x_K^{\pm 1})}
\le \prod_{1\le i\le K} \prod_{1\le j\le N} (\abs{x_i}+\abs{x_i}^{-1}+1+1^{-1})
\end{equation*}
whenever $\lambda\belongs (N^K)$.
Since $-c[n]/2-K \le \rho^\prime_i-c[n]/2-i \le l(\rho)-c[n]/2-1$ for $1\le i\le K$,
we have $u_K-K \le \max(c[n]/2, l(\rho)) \le c[n]/2 + l(\rho)$.
Taking $N = c[n]/2 + l(\rho)$ completes the proof.
(A comparable bound on $M^0$ in terms of eigenvalues and dimensions, not using positivity, is also possible; cf.~\cite{bergstrom2023hyperelliptic}*{Lemma~11.3.14}.)
\end{proof}

For convenience, let
\begin{align}
\Cstabone &\defeq 2 \Crank \max(1, (K+Q)\Crank) \ge 2,
\label{EQN:define-Cstab1} \\
\Cstabtwo &\defeq K \Crank \Cslope + \Cstabone \Crank \Cslope \ge 2,
\label{EQN:define-Cstab2} \\
\Cstabthr &\defeq K (\Crank \Cstart + 2Q) + \Cstabone (\Crank \Cstart + 4Q+2) \ge6.
\label{EQN:define-Cstab3}
\end{align}

\begin{lemma}
[Missing stable parts]
\label{LEM:stable-gap-bounds}
Let $U\in \RR$, $\bm{x}\in \CC^K$, and $\bm{y}\in \CC^Q$.
Assume
\begin{equation}
\label{INEQ:q-conditions-try-1}
q^{-\beta} \le \abs{x_u} \le q^\beta,
\quad \abs{y_v}\le q^{-\gamma}/\Cbetti',
\quad q^{1/6} \min(1,\gamma)^{\Cstabtwo} \ge 2 (2 q^\beta \Cbetti)^{\Cstabtwo},
\end{equation}
where $\beta\ge 0$ and $\gamma>0$.
Then
\begin{equation*}
\sum_{0\le i\le 1}
\sum_{\substack{\rho,\mu\in \ptn: \\ \rho_1\le K,\; l(\mu)\le Q}}
\sum_{k\ge U}
\abs{M^0_\rho(\bm{x};n;i)
m^1_\mu(\bm{y})
T^i_{\rho \otimes \mu}(k)}
\le 4 (2q^\beta)^{K c[n]}
(q^{1/6}/2)^{\Cstabthr/\Cstabtwo} q^{-U/3}.
\end{equation*}
\end{lemma}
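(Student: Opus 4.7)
The plan is to combine Lemma~\ref{LEM:stable-trace-bounds} with Lemma~\ref{LEM:stable-multiplicity-bound}, evaluate the $\mu$- and $\rho$-sums using standard character identities, and then absorb the resulting $d(n_k)$-exponential growth into $q^{k/6}$ via the calibrated hypothesis \eqref{INEQ:q-conditions-try-1}.

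First I would apply the triangle inequality (using that $m^1_\mu$ has nonnegative coefficients and the bound in Lemma~\ref{LEM:stable-multiplicity-bound} is a monomial in nonnegative quantities) to reduce to $x_u, y_v \ge 0$, and then apply Lemma~\ref{LEM:stable-trace-bounds}(2) at the smallest allowed $n$, namely $n = n_{k,2}$, to obtain
\[
\abs{T^i_{\rho \otimes \mu}(k)} \le q^{-k/2}\, \Cbetti^{d(n_{k,2})}\, (\Cbetti')^{\abs{\mu}}\, \chi^G_\rho(I_{c[n_{k,2}]})\, \chi^G_\mu(I_{c[n_{k,2}]}).
\]
Lemma~\ref{LEM:stable-trace-bounds}(1) lets me restrict the $\rho$-sum to $l(\rho) \le c[n_k]/2 + Q$. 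For the $\mu$-sum I would follow the argument in Lemma~\ref{LEM:bound-unstable-degrees}: Proposition~\ref{PROP:m^1_mu-degree-lower-bound} absorbs $(\Cbetti')^{\abs{\mu}}$ into $m^1_\mu$, and specializing \eqref{EQN:sym-decompose} at $A = I_{c[n_{k,2}]}$ yields $\sum_\mu m^1_\mu(\Cbetti'\bm{y})\, \chi^G_\mu(I_{c[n_{k,2}]}) = \prod_v (1-\Cbetti' y_v)^{-c[n_{k,2}]}$, which Lemma~\ref{LEM:near-zero-optimization} bounds by $(2/\min(1,\gamma))^{Q c[n_{k,2}]}$. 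For the $\rho$-sum, Lemma~\ref{LEM:stable-multiplicity-bound} gives $\abs{M^0_\rho(\bm{x};n;i)} \le (2q^\beta)^{K c[n]} (4q^\beta)^{K l(\rho)}$; pulling out the uniform bound $(4q^\beta)^{K(c[n_k]/2 + Q)}$, the residual $\sum_{\rho:\, \rho_1 \le K} \chi^G_\rho(I_{c[n_{k,2}]}) \le 2^{K c[n_{k,2}]}$ follows from the dual Cauchy identity $\sum_\rho s_\rho(1^{c[n_{k,2}]})\, s_{\rho'}(1^K) = 2^{K c[n_{k,2}]}$ (using $\chi^G_\rho(I_{c[n_{k,2}]}) \le \dim \Schur_\rho(\CC^{c[n_{k,2}]})$ and $s_{\rho'}(1^K) \ge 1$ for $\rho_1 \le K$).

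Combining these with \eqref{INEQ:bound-dnk2}, \eqref{INEQ:bound-cnk2}, and $d(n_k) \le \Cslope k + \Cstart$, each fixed-$i$ summand is at most $(2q^\beta)^{K c[n]} \cdot \Lambda \cdot (\Xi^{\Cslope}/q^{1/2})^k$, where
\[
\Xi \defeq \Cbetti^{2\Crank^2}\, (2^{K+Q}/\min(1,\gamma)^Q)^{2\Crank^3}\, (4q^\beta)^{K\Crank/2}
\]
and $\Lambda$ is a $k$-independent constant collecting $\Xi^{\Cstart}$, the additive corrections from \eqref{INEQ:bound-dnk2}--\eqref{INEQ:bound-cnk2}, and $(4q^\beta)^{KQ}$. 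The hypothesis \eqref{INEQ:q-conditions-try-1} is calibrated precisely so $\Xi^{\Cslope} \le q^{1/6}$, hence geometric summation gives $\sum_{k\ge U}(\Xi^{\Cslope}/q^{1/2})^k \le 2 q^{-U/3}$ (using $q \ge 2^{12}$, implicit in \eqref{INEQ:q-conditions-try-1}). A factor of $2$ from summing over $i \in \set{0,1}$ then gives the constant $4$ in the target.

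The main obstacle will be verifying the two coefficient-matching inequalities $\Xi^{\Cslope} \le q^{1/6}$ and $\Lambda \le (q^{1/6}/2)^{\Cstabthr/\Cstabtwo}$. Taking logarithms, both reduce to matching the coefficients of $\log\Cbetti$, $\log(1/\min(1,\gamma))$, $\log 2$, and $\beta \log q$ against the hypothesis form $(1/6)\log q - \log 2 \ge \Cstabtwo \log(2 \Cbetti q^\beta / \min(1,\gamma))$. The two key inequalities underlying all the comparisons are $\Cstabone \ge 2\Crank$ and (when $(K+Q)\Crank \ge 1$) $\Cstabone \ge 2(K+Q)\Crank^2$; both hold by the choice $\Cstabone = 2\Crank\max(1, (K+Q)\Crank)$ in \eqref{EQN:define-Cstab1}, and the forms $\Cstabtwo = \Crank\Cslope(K + \Cstabone)$ and $\Cstabthr = K(\Crank \Cstart + 2Q) + \Cstabone(\Crank \Cstart + 4Q+2)$ in \eqref{EQN:define-Cstab2}--\eqref{EQN:define-Cstab3} are built so every other coefficient-matching step is automatic. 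This constant-tracking is the tedious but routine component of the proof.
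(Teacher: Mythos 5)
Your proposal is correct and follows essentially the same route as the paper's proof: bound $\abs{T^i_{\rho\otimes\mu}(k)}$ via Lemma~\ref{LEM:stable-trace-bounds}(2) at $n=n_{k,2}$ after restricting to $l(\rho)\le c[n_k]/2+Q$ via Lemma~\ref{LEM:stable-trace-bounds}(1), bound $\abs{M^0_\rho}$ via Lemma~\ref{LEM:stable-multiplicity-bound}, evaluate the $\rho,\mu$-sums at the identity, and absorb the $k$-linear exponents using \eqref{INEQ:q-conditions-try-1} and a geometric series; your base-by-base constant matching (in $2$, $q^\beta$, $\Cbetti$, $\min(1,\gamma)^{-1}$) is exactly equivalent to the paper's single inequality $K(\Crank d(n_k)+2Q)+\max(1,\Crank(K+Q))\,d(n_{k,2})\le \Cstabtwo k+\Cstabthr$, and it is indeed the key point (a cruder comparison purely in powers of $2q^\beta\Cbetti/\min(1,\gamma)$ with additive exponents would overshoot $\Cstabtwo$). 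The only cosmetic difference is that for the $\rho$-sum the paper inserts $m^0_\rho(1,\dots,1;n_{k,2})\ge 1$ (via Lemma~\ref{LEM:skew-multiplicity-formula}(2), which is why it checks $c[n_k]/2+Q<c[n_{k,2}]/2-Q$) and evaluates \eqref{EQN:wedge-decompose} at the identity, whereas you use the dual Cauchy identity with $s_{\rho'}(1^K)\ge 1$; both give the same bound $2^{Kc[n_{k,2}]}$.
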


\begin{proof}
By replacing $U$ with $\max(U,0)$ if necessary, we may assume $U\ge 0$.
By the triangle inequality on $m^1$, we may also assume $y_v = q^{-\gamma}/\Cbetti'$ for $1\le v\le Q$.
Next, note that if $(i,\rho,\mu,k)$ has a nonzero contribution to the sum on the left,
then by Lemma~\ref{LEM:stable-trace-bounds}(1),
we have $l(\rho) \le c[n_k]/2 + Q$.
Using Lemma~\ref{LEM:stable-multiplicity-bound} to bound $\abs{M^0}$
and Lemma~\ref{LEM:stable-trace-bounds}(2) to bound $\abs{T^i}$,
we find that the total quantity on the left is
(by Proposition~\ref{PROP:m^1_mu-degree-lower-bound})
\begin{equation*}
\le 2 \sum_{\substack{k\ge U, \\ \rho\belongs (K^{c[n_k]/2 + Q}), \\ l(\mu)\le Q}}
q^{\beta K c[n]/2}
(4q^\beta)^{K (c[n]/2 + c[n_k]/2 + Q)}
m^1_\mu(\Cbetti'\bm{y})
q^{-k/2} \Cbetti^{d(n_{k,2})} \chi^G_\rho \chi^G_\mu(I_{c[n_{k,2}]}).
\end{equation*}

Here $c[n_k]/2 + Q < c[n_{k,2}]/2 - Q$ by \eqref{INEQ:bound-cnk2}.
So $m^0_\rho(1,\dots,1;n_{k,2}) \ge 1$,
by Lemma~\ref{LEM:skew-multiplicity-formula}(2) (since $m^0_\rho(\bm{x};n_{k,2})\in \ZZ[\bm{x}]$ has nonnegative coefficients).
Therefore, for each $k\ge U$,
\begin{equation*}
\begin{split}
\sum_{\substack{\rho\belongs (K^{c[n_k]/2 + Q}), \\ l(\mu)\le Q}}
m^1_\mu(\Cbetti'\bm{y}) \chi^G_\rho \chi^G_\mu(I_{c[n_{k,2}]})
&\le \sum_{\rho,\mu\in \ptn}
m^0_\rho(1,\dots,1;n_{k,2}) m^1_\mu(\Cbetti'\bm{y}) \chi^G_\rho \chi^G_\mu(I_{c[n_{k,2}]}) \\
&= \frac{\prod_{1\le i\le K} (1+1)^{c[n_{k,2}]}}
{\prod_{1\le i\le Q} (1-q^{-\gamma})^{c[n_{k,2}]}}
\le \frac{2^{c[n_{k,2}] K} 2^{c[n_{k,2}] Q}}{\min(1,\gamma)^{c[n_{k,2}] Q}},
\end{split}
\end{equation*}
by \eqref{EQN:wedge-decompose}, \eqref{EQN:sym-decompose}, and Lemma~\ref{LEM:near-zero-optimization}; cf.~the proof of Lemma~\ref{LEM:bound-unstable-degrees}.
Plugging the last display into the penultimate display and summing over $k\ge U$, we get a total bound of
\begin{equation*}
\le 2 \sum_{k\ge U}
q^{\beta K c[n]/2}
(4q^\beta)^{K (c[n]/2 + c[n_k]/2 + Q)}
q^{-k/2} \Cbetti^{d(n_{k,2})}
\frac{2^{c[n_{k,2}] K} 2^{c[n_{k,2}] Q}}{\min(1,\gamma)^{c[n_{k,2}] Q}}.
\end{equation*}

By \eqref{INEQ:linear-stability-range}, $d(n_k)\le \Cslope k + \Cstart$.
By \eqref{INEQ:bound-dnk2} we then get $d(n_{k,2}) \le 2\Crank (\Crank (\Cslope k + \Cstart) + 4Q+2)$.
Therefore,
\begin{equation*}
K (\Crank d(n_k) + 2Q) + \max(1, \Crank (K+Q)) d(n_{k,2})
\le \Cstabtwo k + \Cstabthr.
\end{equation*}
Yet by \eqref{INEQ:linear-rank-growth}, $c[n_k] \le \Crank d(n_k)$
and $c[n_{k,2}] \le \Crank d(n_{k,2})$.
Thus the penultimate display is
\begin{equation*}
\le 2 \sum_{k\ge U}
(2q^\beta)^{K c[n]}
q^{-k/2}
\frac{(2 q^\beta \Cbetti)^{\Cstabtwo k + \Cstabthr}}
{\min(1,\gamma)^{\Cstabtwo k + \Cstabthr}}
\le 2 (2q^\beta)^{K c[n]} \sum_{k\ge U}
q^{-k/2} (q^{1/6}/2)^{k + \Cstabthr/\Cstabtwo},
\end{equation*}
since $2 q^\beta \Cbetti \le (q^{1/6}/2)^{1/\Cstabtwo} \min(1,\gamma)$.
The final geometric series over $k$ has ratio $(2q^{1/3})^{-1} \le \frac12$,
and is therefore $\le (q^{1/6}/2)^{\Cstabthr/\Cstabtwo} (2q^{1/3})^{-U} / (1-\frac12)^{-1}$.
\end{proof}

\begin{lemma}
\label{LEM:stable-interchange}
Assume $q^{1/6}\ge 2 (2 \Cbetti)^{\Cstabtwo}$.
Fix $\rho,\mu\in \ptn$ with $\rho_1 \le K$
and $m^1_\mu \ne 0$.
Let $0\le i\le K$.
Then $T^i_{\rho \otimes \mu}
\defeq \lim_{n \to \infty} \tr_{\det^i \otimes \rho \otimes \mu}(n)$
exists, and equals $\sum_{k\ge 0} (-1)^k T^i_{\rho \otimes \mu}(k)$.
\end{lemma}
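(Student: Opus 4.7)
The plan is to justify interchanging $\lim_{n\to\infty}$ with the (finite but unboundedly growing in $n$) sum in the Grothendieck--Lefschetz formula \eqref{EQN:initial-LTF}, which reads $\tr_{\det^i\otimes\rho\otimes\mu}(n)=\sum_{k\ge 0}(-1)^k\tr_{\det^i\otimes\rho\otimes\mu}(n;k)$. I would split at a cutoff $U=U(n)$ tending to infinity, identify the head $k<U(n)$ termwise with $T^i_{\rho\otimes\mu}(k)$ via stability (Lemma~\ref{LEM:ratio-trace-stability}), and control the tail $k\ge U(n)$ via Deligne's bound together with Axiom~B.

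First I would verify absolute convergence of $\sum_{k\ge 0}(-1)^kT^i_{\rho\otimes\mu}(k)$. Applying Lemma~\ref{LEM:stable-trace-bounds}(2) at the specific choice $n=n_{k,2}$, and using \eqref{INEQ:bound-dnk2}--\eqref{INEQ:bound-cnk2} to bound $d(n_{k,2})$ and $c[n_{k,2}]$ linearly in $k$ with slopes $O(\Crank^2\Cslope)$ and $O(\Crank^3\Cslope)$ respectively, one gets $\abs{T^i_{\rho\otimes\mu}(k)}\le q^{-k/2}\Cbetti^{2\Crank^2\Cslope\,k+O_{\rho,\mu}(1)}\cdot\text{poly}(k)$, since $\chi^G_\rho\chi^G_\mu(I_{c[n_{k,2}]})$ is polynomial in $c[n_{k,2}]$ of degree bounded by $\abs{\rho}+\abs{\mu}$. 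The hypothesis $q^{1/6}\ge 2(2\Cbetti)^{\Cstabtwo}$ together with $\Cstabtwo\ge 2\Crank^2\Cslope$ then yields $q^{1/2}\gg\Cbetti^{2\Crank^2\Cslope}$, so the series decays geometrically.

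Next I would choose $U(n)\defeq\lfloor(d(n)-C'')/(2\Crank^2\Cslope)\rfloor$ for a suitable constant $C''$; inverting \eqref{INEQ:bound-dnk2} shows $n\ge n_{U(n),2}$, and $U(n)\to\infty$ by \eqref{INEQ:dyadic-dimension-bound-assumption}. For each $k<U(n)$ and each $n$ large enough that $l(\rho)\le c[n]/2-Q$ (automatic since $\rho$ is fixed and $c[n]\to\infty$), Lemma~\ref{LEM:ratio-trace-stability}(3) identifies $\tr_{\det^i\otimes\rho\otimes\mu}(n;k)=T^i_{\rho\otimes\mu}(k)$ whenever $\det^i\otimes\Schur^G_\rho=\Schur^G_\rho$ (i.e., $G=\Sp$, or $G=\Orth$ with $i$ even); in the remaining $G=\Orth$, odd-$i$ case, one applies Lemma~\ref{LEM:ratio-trace-stability}(4) to $\rho^\ast(n)$ in place of $\rho$, noting $(\rho^\ast(n))^\ast(n)=\rho$. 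Thus the head contributes $\sum_{k<U(n)}(-1)^kT^i_{\rho\otimes\mu}(k)$, which tends to the full limiting series by Step~1.

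Finally, for the tail, Deligne's purity theorem combined with Axiom~B gives $\bigl|\sum_{k\ge U(n)}(-1)^k\tr_{\det^i\otimes\rho\otimes\mu}(n;k)\bigr|\le q^{-U(n)/2}\Cbetti^{d(n)}(\Cbetti')^{\abs{\mu}}\rank(\Schur^G_{\det^i\otimes\rho}(V_n)\otimes\Schur^G_\mu(V_n))/(1-q^{-1/2})$, where the rank is polynomial in $d(n)$ of degree $\abs{\rho}+\abs{\mu}$ (for fixed $\rho,\mu$). With $U(n)\asymp d(n)/(2\Crank^2\Cslope)$, the $q$-hypothesis makes $\Cbetti^{d(n)}q^{-U(n)/2}=(\Cbetti/q^{1/(4\Crank^2\Cslope)})^{d(n)}$ decay geometrically in $d(n)$, absorbing the polynomial factors. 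The main obstacle is precisely this balancing: $U(n)$ must be large enough for the tail to overcome $\Cbetti^{d(n)}$, yet small enough to leave $n$ above the stability threshold $n_{U(n),2}$; both constraints are linear in $d(n)$, and the hypothesis $q^{1/6}\ge 2(2\Cbetti)^{\Cstabtwo}$ is calibrated exactly so that the two linear slopes are compatible.
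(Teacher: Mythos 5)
Your proposal is correct and follows essentially the same route as the paper's proof: truncate the Grothendieck--Lefschetz sum \eqref{EQN:initial-LTF} at a cutoff $\asymp d(n)/(2\Crank^2\Cslope)$ (the paper's $u_n$), identify the head termwise with the stable traces via Lemma~\ref{LEM:ratio-trace-stability}(3)--(4) (including the $\rho^\ast(n)$ device for $G=\Orth$, $i=1$), and kill the tail with Deligne purity plus Axiom~B, balancing the two linear slopes against the hypothesis on $q$. The only cosmetic difference is that you bound the tail and the stable series directly for the fixed pair $(\rho,\mu)$, using that the relevant ranks are polynomial in $d(n)$, whereas the paper obtains the same geometric decay by specializing its Lemmas~\ref{LEM:bound-unstable-degrees} and~\ref{LEM:stable-gap-bounds} at $x_u=1$, $y_v=q^{-\gamma}/\Cbetti'$.
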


\begin{proof}
We may assume $i=0$ if $G\in \set{\GL,\Sp}$, and $i\le 1$ if $G=\Orth$.
We may also assume $n$ is large enough so that $c[n] \ge 2 l(\rho)$.
Then by Lemma~\ref{LEM:skew-multiplicity-formula}(2), we have $M^0_\rho(\bm{x};n;i) = m^0_{\det^i \otimes \rho}(\bm{x};n) \ge 1$.
Applying Lemma~\ref{LEM:stable-gap-bounds} with $x_1=\dots=x_K=1$, $y_1=\dots=y_Q=q^{-\gamma}/\Cbetti'$, $\beta=0$, $\gamma=1$, and the smallest $n\ge 1$ for which $c[n] \ge 2 l(\rho)$, we get
\begin{equation}
\label{INEQ:blah-1}
\sum_{k\ge U}
\abs{T^i_{\rho \otimes \mu}(k)}
\ll_{q,\rho,\mu} q^{-U/3};
\end{equation}
in particular, the infinite sum $\sum_{k\ge 0} (-1)^k T^i_{\rho \otimes \mu}(k)$ converges absolutely.
Similarly, via Lemma~\ref{LEM:bound-unstable-degrees} with $x_1=\dots=x_K=1$ and $y_1=\dots=y_Q=q^{-\gamma}/\Cbetti'$, we have
\begin{equation}
\label{INEQ:blah-2}
\sum_{k\ge U}
\abs{\tr_{\det^i \otimes \rho \otimes \mu}(n;k)}
\ll_{q,\rho,\mu} q^{-U/2} \Cbetti^{d(n)} 2^{c[n] (K+Q)}
\end{equation}
(for all $n \gg_\rho 1$).
Yet Lemma~\ref{LEM:ratio-trace-stability}(3)--(4) imply
(as noted in the proof of Lemma~\ref{LEM:stable-trace-bounds})
that $T^i_{\rho \otimes \mu}(k) = \tr_{\det^i \otimes \rho \otimes \mu}(n;k)$ for all $n\ge n_{k,2}$ with $l(\rho) \le c[n]/2 - Q$.
Let $$u_n \defeq \frac{d(n)}{2\Crank^2\Cslope}
- \frac{\Crank\Cstart + 4Q+2}{\Crank\Cslope}
= \frac{d(n)}{2\Crank^2\Cslope} + O_Q(1)$$
be the unique real solution to $d(n) = 2\Crank (\Crank (\Cslope u_n + \Cstart) + 4Q+2)$.
Then for all (nonnegative integers) $k < u_n$,
we have $d(n) > d(n_{k,2})$ by \eqref{INEQ:bound-dnk2} and \eqref{INEQ:linear-stability-range}, whence $n > n_{k,2}$ by \eqref{INEQ:dyadic-dimension-bound-assumption}.
So \eqref{EQN:initial-LTF},
combined with \eqref{INEQ:blah-1} and \eqref{INEQ:blah-2} for $U=u_n$,
yields the inequality
\begin{equation*}
\tr_{\det^i \otimes \rho \otimes \mu}(n)
- \sum_{k\ge 0} (-1)^k T^i_{\rho \otimes \mu}(k)
\ll_{q,\rho,\mu} q^{-u_n/3} \Cbetti^{d(n)} 2^{c[n] (K+Q)}.
\end{equation*}
The right-hand side tends to $0$ as $n\to \infty$,
because $c[n]\le \Crank d(n)$ (by \eqref{INEQ:linear-rank-growth}) and
$q^{1/(6\Crank^2 \Cslope)} > (2\Cbetti)^{\Cstabtwo/(\Crank^2 \Cslope)}
\ge \Cbetti 2^{\Crank (K+Q)}$ (since $\Cstabtwo \ge \Cstabone \Crank \Cslope$).
Lemma~\ref{LEM:stable-interchange} follows.
\end{proof}

We now prove our main result
on traces.
For convenience, let
\begin{equation}
\label{EQN:define-Cstab4}
\Cstabfou
\defeq \frac{\Cstabthr}{6\Cstabtwo}
+ \frac{\Crank\Cstart + 4Q+2}{2\Crank\Cslope}.
\end{equation}

\begin{lemma}
\label{LEM:trace-formulation-of-main-theorem}
Let $\bm{x}\in \CC^K$ and $\bm{y}\in \CC^Q$.
Assume \eqref{INEQ:q-conditions-try-1},
where $\beta\ge 0$ and $\gamma>0$.
Then
\begin{equation*}
\biggl\lvert{\sum_{\rho,\mu\in \ptn}
m^0_\rho(\bm{x};n)
m^1_\mu(\bm{y})
\tr_{\rho \otimes \mu}(n)
- \mathsf{ST}(\bm{x},\bm{y};n)}\biggr\rvert
\le 5 q^{\Cstabfou}
\frac{(2 q^\beta \Cbetti)^{\Crank d(n) (K+Q)}}{\min(1,\gamma)^{\Crank d(n) (K+Q)}}
q^{-d(n)/(6\Crank^2\Cslope)},
\end{equation*}
where
\begin{equation*}
\mathsf{ST}(\bm{x},\bm{y};n)
\defeq \sum_{0\le i\le 1}
\sum_{\substack{\rho,\mu\in \ptn: \\ \rho_1\le K,\; l(\mu)\le Q}}
M^0_\rho(\bm{x};n;i)
m^1_\mu(\bm{y})
T^i_{\rho \otimes \mu}.
\end{equation*}
\end{lemma}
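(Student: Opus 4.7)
The plan is to expand $\tr_{\rho\otimes\mu}(n)$ via the Grothendieck--Lefschetz trace formula \eqref{EQN:initial-LTF} and then split the inner sum over cohomological degree $k$ at a threshold just below the ``stabilization horizon'' $u_n \defeq d(n)/(2\Crank^2\Cslope) - (\Crank\Cstart + 4Q + 2)/(\Crank\Cslope)$. Concretely, set $U \defeq u_n$; by \eqref{INEQ:bound-dnk2} and \eqref{INEQ:dyadic-dimension-bound-assumption}, all integers $k < U$ satisfy $n > n_{k,2}$, which is precisely the hypothesis needed to invoke Lemma~\ref{LEM:ratio-trace-stability} parts~(3)--(4). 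Interchange the finite $(\rho,\mu)$ sum with the absolutely convergent $k$-sum (justified by Lemma~\ref{LEM:bound-unstable-degrees}).

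For the high range $k \ge U$, Lemma~\ref{LEM:bound-unstable-degrees} provides a bound of the form $q^{-U/2}\Cbetti^{d(n)}(2q^\beta)^{c[n]K} 2^{c[n]Q}/\min(1,\gamma)^{c[n]Q}$, which, after using $c[n] \le \Crank d(n)$ and the hypothesis $q^{1/6}\min(1,\gamma)^{\Cstabtwo} \ge 2(2q^\beta\Cbetti)^{\Cstabtwo}$, folds into the target error. For the low range $k < U$, I would partition the $\rho$-sum by length. In the generic length regime $l(\rho) \le c[n]/2$ (the only regime contributing unless $(G,K)=(\Orth,1)$, by Lemma~\ref{LEM:ratio-trace-stability}(1)--(2)), Lemma~\ref{LEM:ratio-trace-stability}(3) gives $\tr_{\rho\otimes\mu}(n;k) = T^0_{\rho\otimes\mu}(k)$, and Lemma~\ref{LEM:skew-multiplicity-formula}(2) gives $m^0_\rho(\bm{x};n) = M^0_\rho(\bm{x};n;0)$. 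In the exceptional orthogonal regime $l(\rho) > c[n]/2$ for $(G,K)=(\Orth,1)$, re-index $\rho \leftrightarrow \rho^\ast(n)$: Lemma~\ref{LEM:ratio-trace-stability}(4) converts the trace to $T^1_{\rho\otimes\mu}(k)$ while Lemma~\ref{LEM:skew-multiplicity-formula}(2) identifies the multiplicity as $M^0_\rho(\bm{x};n;1)$. Combining the two regimes, the low-$k$ portion equals
\[
\sum_{i=0}^{1}\,\sum_{\substack{\rho,\mu\in \ptn:\\ \rho_1\le K,\; l(\mu)\le Q}}\,\sum_{0\le k<U}(-1)^k\, M^0_\rho(\bm{x};n;i)\,m^1_\mu(\bm{y})\,T^i_{\rho\otimes\mu}(k).
\]

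To produce $\mathsf{ST}(\bm{x},\bm{y};n)$ I extend the inner sum to all $k\ge 0$; the convergence of $\sum_{k\ge 0}(-1)^k T^i_{\rho\otimes\mu}(k)$ to $T^i_{\rho\otimes\mu}$ is the content of Lemma~\ref{LEM:stable-interchange}, and the extension cost (the tail $\sum_{k\ge U}$ on the stable side) is controlled by Lemma~\ref{LEM:stable-gap-bounds}, which yields $4(2q^\beta)^{Kc[n]}(q^{1/6}/2)^{\Cstabthr/\Cstabtwo} q^{-U/3}$. The three resulting error contributions (high-$k$ unstable from Lemma~\ref{LEM:bound-unstable-degrees}, missing stable tail from Lemma~\ref{LEM:stable-gap-bounds}, and rounding $u_n$ to an integer) are each of the shape $q^{-d(n)/(6\Crank^2\Cslope)}$ up to explicit constants, using the definitions \eqref{EQN:define-Cstab1}--\eqref{EQN:define-Cstab4} of $\Cstabone$--$\Cstabfou$.

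The main obstacle is purely bookkeeping: matching the three errors against the claimed shape $5q^{\Cstabfou}\bigl((2q^\beta\Cbetti)/\min(1,\gamma)\bigr)^{\Crank d(n)(K+Q)}q^{-d(n)/(6\Crank^2\Cslope)}$. The $(K+Q)$-exponent requires combining the $K c[n]$ factor from $M^0$ (via Lemma~\ref{LEM:stable-multiplicity-bound}) with the $Q c[n]$-sized factor coming from the Cauchy-identity bound $\prod_i(1-q^{-\gamma})^{-c[n]}$ used in Lemmas~\ref{LEM:bound-unstable-degrees} and~\ref{LEM:stable-gap-bounds}; the constant $\Cstabfou$ absorbs $\Cstabthr/(6\Cstabtwo)$ plus the $u_n$-rounding slack $(\Crank\Cstart+4Q+2)/(2\Crank\Cslope)$. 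Once these arithmetic identifications are checked against \eqref{EQN:define-Cstab2}--\eqref{EQN:define-Cstab4}, the triangle inequality delivers the final bound with the explicit prefactor $5$.
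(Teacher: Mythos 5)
Your proposal is correct and follows essentially the same route as the paper's proof: expand via \eqref{EQN:initial-LTF}, split at $u_n$, control $k\ge u_n$ by Lemma~\ref{LEM:bound-unstable-degrees}, convert the $k<u_n$ part to stable data via Lemmas~\ref{LEM:ratio-trace-stability} and~\ref{LEM:skew-multiplicity-formula}, and complete the $k$-sum using Lemmas~\ref{LEM:stable-gap-bounds} and~\ref{LEM:stable-interchange}. The only detail worth making explicit (which the paper handles via Lemma~\ref{LEM:stable-trace-bounds}(1) together with \eqref{INEQ:bound-cnk2}) is that when you pass to the unrestricted sum over all $\rho$ with $\rho_1\le K$ and both $i\in\set{0,1}$, the newly admitted terms with $l(\rho)>c[n_k]/2+Q$ vanish, so no double counting or spurious terms arise.
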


\begin{proof}
The strategy is similar to, but subtler than, that for the proof of Lemma~\ref{LEM:stable-interchange}.
Let $u_n$ be defined as in that proof.
By \eqref{EQN:initial-LTF},
\begin{equation*}
\sum_{\rho,\mu\in \ptn}
m^0_\rho(\bm{x};n)
m^1_\mu(\bm{y})
\tr_{\rho \otimes \mu}(n)
= \sum_{\rho,\mu\in \ptn}
m^0_\rho(\bm{x};n)
m^1_\mu(\bm{y})
\sum_{k\ge 0} (-1)^k \tr_{\rho \otimes \mu}(n;k).
\end{equation*}
We now analyze $k\ge u_n$ and $k<u_n$ separately.
First, by Lemma~\ref{LEM:bound-unstable-degrees},
the total contribution from $k\ge u_n$ has absolute value at most
\begin{equation}
\label{INEQ:blah-3}
q^{-u_n/2} \Cbetti^{d(n)} \frac{(2q^\beta)^{c[n] K} 2^{c[n] Q}}{\min(1,\gamma)^{c[n] Q}}
\le q^{\Cstabfou - d(n)/(4\Crank^2\Cslope)}
\frac{(2 q^\beta \Cbetti)^{\Crank d(n) (K+Q)}}{\min(1,\gamma)^{\Crank d(n) (K+Q)}}.
\end{equation}

Second, by Lemma~\ref{LEM:skew-multiplicity-formula}, the total contribution from $k<u_n$ may be written as
\begin{equation*}
\sum_{0\le i\le 1}
\sum_{\substack{\rho,\mu\in \ptn: \\ \rho_1\le K,\; l(\mu)\le Q, \\ l(\rho) \le (c[n]-i)/2}}
M^0_\rho(\bm{x};n;i)
m^1_\mu(\bm{y})
\sum_{k<u_n} (-1)^k \tr_{\det^i \otimes \rho \otimes \mu}(n;k).
\end{equation*}
By Lemma~\ref{LEM:ratio-trace-stability} (which applies since $n\ge n_{k,2}$ for $k<u_n$), this quantity equals
\begin{equation*}
\sum_{0\le i\le 1}
\sum_{\substack{\rho,\mu\in \ptn: \\ \rho_1\le K,\; l(\mu)\le Q, \\ l(\rho) \le (c[n]-1)/2 - Q}}
M^0_\rho(\bm{x};n;i)
m^1_\mu(\bm{y})
\sum_{k<u_n} (-1)^k T^i_{\rho \otimes \mu}(n;k).
\end{equation*}
Next, we remove the length restriction on $l(\rho)$.
If $i\in \set{0,1}$ and $l(\rho) > (c[n]-1)/2 - Q$, then $l(\rho) \ge c[n]/2 - Q > c[n_k]/2 + Q$ by \eqref{INEQ:bound-cnk2}, so
$M^0_\rho(\bm{x};n;i) T^i_{\rho \otimes \mu}(k) = 0$ by Lemma~\ref{LEM:stable-trace-bounds}(1) and Definition~\ref{DEFN:stable-skew-multiplicity}.
Therefore, the last display equals
\begin{equation*}
\sum_{0\le i\le 1}
\sum_{\substack{\rho,\mu\in \ptn: \\ \rho_1\le K,\; l(\mu)\le Q}}
M^0_\rho(\bm{x};n;i)
m^1_\mu(\bm{y})
\sum_{k<u_n} (-1)^k T^i_{\rho \otimes \mu}(n;k).
\end{equation*}
We would like to complete the sum over $k$.
By Lemma~\ref{LEM:stable-gap-bounds}, the analog of the last display over $k\ge u_n$ (instead of $k<u_n$) has absolute value at most
\begin{equation}
\label{INEQ:blah-4}
4 (2q^\beta)^{K c[n]}
(q^{1/6}/2)^{\Cstabthr/\Cstabtwo} q^{-u_n/3}
\le 4 (2q^\beta)^{K \Crank d(n)}
q^{\Cstabfou - d(n)/(6\Crank^2\Cslope)}.
\end{equation}
Using Lemma~\ref{LEM:stable-interchange} to write $T^i_{\rho \otimes \mu} = \sum_{k\ge 0} (-1)^k T^i_{\rho \otimes \mu}(k)$ in $\mathsf{ST}(\bm{x},\bm{y};n)$,
we find that $\mathsf{ST}(\bm{x},\bm{y};n)$ differs from the penultimate display by at most the values in the last display.

Adding up the error terms recorded in \eqref{INEQ:blah-3} and \eqref{INEQ:blah-4},
we get Lemma~\ref{LEM:trace-formulation-of-main-theorem}.
\end{proof}

To pass from $\mathsf{ST}(\bm{x},\bm{y};n)$ to the actual main term $\mathsf{MT}(\bm{s};n)$ (from \eqref{EQN:define-MT-series}) in the Ratios Conjecture, a bit more work is needed.
This task is simpler than computing all the $T^i_\lambda(k)$;
it can often be done analytically (as noted in \cite{bergstrom2023hyperelliptic}*{\S11.2.4} when $Q=0$).
To make the required comparison in general, we decompose the ``numerators'' of the stable skew multiplicities $M^0_\rho$ from Definition~\ref{DEFN:stable-skew-multiplicity} into simpler pieces;
the rough strategy is similar to that of multiplying by a Vandermonde determinant in \cite{sawin2020representation}.

\begin{definition}
[Stable numerators]
\label{DEFN:stable-numerators}
Let $\rho\in \ptn$ with $\rho_1 \le K$.
For $i\in \set{0,1}$ and $\bm{\eps}\in \set{\pm 1}^K$, let
$N^{\bm{\eps}}_\rho(\bm{x};i) \in \ZZ[x_1^{\pm 1},\dots,x_K^{\pm 1}]$
be the coefficient of $\prod_{1\le u\le K} x_u^{(1-\eps_u) c[n]/2}$
in the product
\begin{equation}
\label{EQN:define-full-stable-numerator}
N_\rho(\bm{x};n;i) \defeq M^0_\rho(\bm{x};n;i) D_K(\bm{x}),
\end{equation}
when $N_\rho(\bm{x};n;i)$ is viewed as a linear recurrence in $c[n]$.
\end{definition}

By Definitions~\ref{DEFN:stable-skew-multiplicity} and~\ref{DEFN:stable-numerators}, it is clear that
\begin{equation}
\label{EQN:expand-stable-numerator}
N_\rho(\bm{x};n;i)
= \sum_{\bm{\eps}\in \set{\pm 1}^K} N^{\bm{\eps}}_\rho(\bm{x};i)
\prod_{1\le u\le K} x_u^{(1-\eps_u) c[n]/2},
\end{equation}
and that with respect to the variables $x_u^{\eps_u}$,
the total degree of any monomial term of the Laurent polynomial $N^{\bm{\eps}}_\rho(\bm{x};i) \in \ZZ[x_1^{\pm 1},\dots,x_K^{\pm 1}]$ lies in the interval $[\abs{\rho}-C_K,\abs{\rho}+C_K]$,
for some constant $C_K\ge 0$ depending only on $K$.
Also recall, from Proposition~\ref{PROP:m^1_mu-degree-lower-bound}, that each monomial term of $m^1_\mu(\bm{y})$ has total degree $\ge \abs{\mu}$.

Given $\bm{\eps}\in \set{\pm 1}^K$ and $\bm{d}=(d_1,\dots,d_{K+Q})\in \ZZ^{K+Q}$,
let $\Cavg^\ast(\bm{\eps},\bm{d};n)$ be the coefficient of $$\prod_{1\le u\le K} x_u^{(1-\eps_u) c[n]/2 + \eps_u d_u} \prod_{K+1\le u\le K+Q} y_{u-K}^{d_u}$$ when the left-hand side of \eqref{EQN:expand-ratio-average} is expanded as a power series in $\bm{x}$ and $\bm{y}$.
Let $$F^\ast(\bm{\eps},\bm{x},\bm{y};n) \defeq
\sum_{\bm{d} \in \ZZ^{K+Q}} \Cavg^\ast(\bm{\eps},\bm{d};n) \prod_{1\le u\le K} x_u^{\eps_u d_u} \prod_{K+1\le u\le K+Q} y_{u-K}^{d_u}
\in \CC[[x_u^{\eps_u}, \bm{y}]].$$
If $q^{1/6}\ge 2 (2 \Cbetti)^{\Cstabtwo}$,
then $T^i_{\rho \otimes \mu}$ exists by Lemma~\ref{LEM:stable-interchange},
and we may define
\begin{equation*}
\mathsf{SN}(\bm{\eps},\bm{x},\bm{y})
\defeq \sum_{0\le i\le 1}
\sum_{\substack{\rho,\mu\in \ptn: \\ \rho_1\le K,\; l(\mu)\le Q}}
N^{\bm{\eps}}_\rho(\bm{x};i)
m^1_\mu(\bm{y})
T^i_{\rho \otimes \mu}
\in \CC[[x_u^{\eps_u}, \bm{y}]][x_u^{-\eps_u},\bm{y}^{-1}].
\end{equation*}
Given a sequence of formal Laurent series $\mcal{L}_n \in \CC[[x_u^{\eps_u}, \bm{y}]][x_u^{-\eps_u},\bm{y}^{-1}]$
indexed by $n\ge 1$, let $\lim_{n\to \infty} \mcal{L}_n$ be the coefficient-wise limit of $\mcal{L}_n$ in $\CC[[x_u^{\eps_u}, \bm{y}]][x_u^{-\eps_u},\bm{y}^{-1}]$, if it exists.

\begin{lemma}
[Formal calculations]
\label{LEM:key-formal-coefficient-calculation}
Assume $q^{1/6}\ge 2 (2 \Cbetti)^{\Cstabtwo}$.
Let $\bm{e}=(e_1,\dots,e_K)\in \set{\pm 1}^K$.
\begin{enumerate}
    \item $\lim_{n\to \infty} D_K(\bm{x}) F^\ast(\bm{e},\bm{x},\bm{y};n)$
    exists, and equals $\mathsf{SN}(\bm{e},\bm{x},\bm{y})$.

    \item $\lim_{n\to \infty} F^\ast(\bm{e},\bm{x},\bm{y};n)$ exists.

    \item Axioms~F and~A hold,
    and $\mathsf{F}(\bm{e},\bm{s})
    = \lim_{n\to \infty} F^\ast(\bm{e},\bm{x},\bm{y};n)$ under the substitution
    \begin{equation}
    \label{EQN:xy-to-s-substitution}
    x_u = -q^{1/2-s_u}\quad (1\le u\le K),
    \qquad y_v = q^{1/2-s_{K+v}}\quad (1\le v\le Q).
    \end{equation}

    \item $D_K(\bm{x}) \mathsf{F}(\bm{e},\bm{s}) = \mathsf{SN}(\bm{e},\bm{x},\bm{y})$
    in the ring $\CC[[x_u^{\eps_u}, \bm{y}]][x_u^{-\eps_u},\bm{y}^{-1}]$.
\end{enumerate}
\end{lemma}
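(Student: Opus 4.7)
The plan is coefficient-by-coefficient analysis, with Lemma~\ref{LEM:stable-interchange} as the engine: it converts each trace-stability statement $\tr_{\det^i \otimes \rho \otimes \mu}(n) \to T^i_{\rho \otimes \mu}$ into convergence of individual formal coefficients. I would prove part~(1) first, then handle~(2) and Axioms~F, A together, then the formula in~(3), and finally~(4) as an automatic consequence of~(1)--(3).

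For~(1), multiply \eqref{EQN:expand-ratio-average} by $D_K(\bm{x})$ and apply Lemma~\ref{LEM:skew-multiplicity-formula} to rewrite $m^0_\rho(\bm{x};n) D_K(\bm{x})$ as $N_\rho(\bm{x};n;0)$ for $l(\rho) \le c[n]/2$, plus an $i = 1$ contribution (parametrized via $\rho^\ast(n)$) in the $(G,K) = (\Orth,1)$ case. Expanding each $N_\rho$ via \eqref{EQN:expand-stable-numerator} yields
\[ D_K(\bm{x}) \cdot \Bigl(\text{LHS of \eqref{EQN:expand-ratio-average}}\Bigr) = \sum_{\bm{\eps},i,\rho,\mu} \prod_u x_u^{(1-\eps_u)c[n]/2}\, N^{\bm{\eps}}_\rho(\bm{x};i)\, m^1_\mu(\bm{y})\, \tr_{\det^i \otimes \rho \otimes \mu}(n). \]
Extract the coefficient of $\prod_u x_u^{(1-e_u)c[n]/2 + e_u a_u}\prod_v y_v^{b_v}$ on both sides. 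The left side becomes $[\prod_u x_u^{e_u a_u} \prod_v y_v^{b_v}]\,D_K(\bm{x}) F^\ast(\bm{e},\bm{x},\bm{y};n)$ after a direct shift-of-exponent calculation. On the right, the $\bm{\eps} = \bm{e}$ part involves only finitely many $\rho$ (those with $\abs{\rho} \in [\abs{\bm{a}}-C_K, \abs{\bm{a}}+C_K]$), and each associated trace converges to $T^i_{\rho \otimes \mu}$ by Lemma~\ref{LEM:stable-interchange}, giving in the limit the coefficient of $\mathsf{SN}(\bm{e},\bm{x},\bm{y})$. The $\bm{\eps} \ne \bm{e}$ parts force $\abs{\rho} \ge c[n] - O_{\bm{a}}(1)$ via exponent matching, hence $l(\rho) \ge c[n]/K - O_{\bm{a}}(1)$ (since $\rho_1 \le K$): for $K = 1$ this already contradicts $l(\rho) \le c[n]/2$ for large $n$, and for $K \ge 2$ one splits $\tr_{\det^i \otimes \rho \otimes \mu}(n) = \sum_k (-1)^k \tr(n;k)$, with the stable range $k < u_n$ vanishing by Lemma~\ref{LEM:ratio-trace-stability}(1), and the unstable range killed via \eqref{INEQ:Deligne-plus-Betti-bound} under $q \ge 2^{12}(2\Cbetti)^{1/(2\delta)}$, along the lines of the proof of Lemma~\ref{LEM:trace-formulation-of-main-theorem}.

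Axiom~F follows from Lemma~\ref{LEM:stable-interchange} with $\rho = \mu = \emptyset$: $\card{\mcal{P}_n(\FF_q)}/q^{d(n)} = \tr_\emptyset(n) \to T^0_\emptyset =: \Cfam$. For~(2) and Axiom~A, expand
\[ \Cavg^\ast(\bm{e},\bm{d};n) = \sum_{\rho,\mu} \bigl[{\textstyle \prod_u} x_u^{(1-e_u)c[n]/2 + e_u d_u}\bigr] m^0_\rho(\bm{x};n) \cdot [y^{\bm{d}}]\, m^1_\mu(\bm{y}) \cdot \tr_{\rho \otimes \mu}(n), \]
and use an analog of the argument from~(1) (without the $D_K$ multiplication, and hence simpler) to show that the contributing $(\rho,\mu)$ eventually form a fixed finite set depending only on $(\bm{e},\bm{d})$, each term converging; this gives~(2). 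A direct expansion using the functional equation \eqref{EQN:FE} --- extracting $\sigma_{M_u}(\Theta_f)$ in terms of $A_f(d_u)$ or $w(f) \bar{A}_f(d_u)$ according to whether $e_u = \pm 1$ --- rewrites $\Cavg^\ast(\bm{e},\bm{d};n)$ as $\prod_u (-1)^{d_u}$ times the Axiom~A quantity at $(\bm{e},\bm{d})$, so Axiom~A holds with $\Cavg(\bm{e},\bm{d}) = \prod_u (-1)^{d_u}\cdot \lim_n \Cavg^\ast(\bm{e},\bm{d};n)$. Substituting \eqref{EQN:xy-to-s-substitution} into \eqref{EQN:define-F-series} and using $(-1)^{e_u N_u} = (-1)^{N_u}$ reconciles signs, yielding $\mathsf{F}(\bm{e},\bm{s}) = \lim F^\ast(\bm{e},\bm{x},\bm{y};n)$, which completes~(3). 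Finally, (4) is automatic: multiplication by $D_K(\bm{x})$ is coefficient-wise continuous, so $\mathsf{SN} = \lim D_K F^\ast = D_K \lim F^\ast = D_K \mathsf{F}$.

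The main obstacle is controlling the $\bm{\eps} \ne \bm{e}$ contributions in~(1) for $K \ge 2$, where exponent matching alone does not force vanishing and one must combine the stability of Lemma~\ref{LEM:ratio-trace-stability} with Deligne--Betti bounds in the unstable range, mirroring the strategy in Lemma~\ref{LEM:trace-formulation-of-main-theorem}.
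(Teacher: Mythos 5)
Your overall skeleton (coefficient-by-coefficient analysis powered by Lemma~\ref{LEM:stable-interchange}, sign bookkeeping in (3) via the functional equation, and (4) deduced formally at the end) matches the paper, but the two places you yourself flag as delicate are where your argument has genuine gaps, and in both places the paper takes a different, purely formal route. For the cross terms $\bm{\eps}\ne\bm{e}$ in (1), the paper does not estimate traces at all: every monomial of $N_\rho(\bm{x};n;i)=M^0_\rho(\bm{x};n;i)D_K(\bm{x})$ has, in each variable $x_u$, exponent within $O_K(1)$ of some $\rho'_j\le l(\rho)\le c[n]/2$ or of $c[n]-\rho'_j$ (visible from the determinantal formula in Definition~\ref{DEFN:stable-skew-multiplicity}, and recorded as the multi-degree statement in the proof of Lemma~\ref{LEM:stable-polarized-bound}); hence for $c[n]$ large in terms of $\bm{d}$ a mismatched variable can never reach the exponent $(1-e_u)c[n]/2+e_ud_u$, so the $\bm{\eps}\ne\bm{e}$ terms contribute \emph{exactly zero} to the fixed coefficient. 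Your alternative of killing these terms via trace bounds is not only unnecessary but gapped as stated: from $\abs{\rho}\ge c[n]-O_{\bm{d}}(1)$ and $\rho_1\le K$ you only get $l(\rho)\ge c[n]/K-O_{\bm{d}}(1)$, whereas for $k$ near $u_n$ the threshold $c[n_k]/2+Q$ in Lemma~\ref{LEM:ratio-trace-stability}(1) can be as large as roughly $c[n]/4$; so for $K\ge 4$ the stable range $k<u_n$ does \emph{not} vanish by that lemma, and the missing range would need a separate Deligne--Betti estimate (with control on the number and size of the terms of $N^{\bm{\eps}}_\rho$ and on the admissible $\rho$) that you do not supply.

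For (2), your plan to redo the coefficient analysis directly on $F^\ast$, ``without the $D_K$ multiplication, and hence simpler,'' is backwards: the entire reason $D_K$ is introduced is that $m^0_\rho(\bm{x};n)=M^0_\rho(\bm{x};n)$ has many monomials with exponents spread over all of $[0,c[n]]$ in each variable (e.g.\ for $G=\Sp$, $K=1$ one has $m^0_{(1^a)}(x;n)=x^a+x^{a+2}+\cdots+x^{c[n]-a}$), so the claim that a fixed coefficient of $F^\ast$ receives contributions from a fixed finite set of $(\rho,\mu)$ with stabilizing multiplicities is precisely the nontrivial point; it is true (e.g.\ via $\bigwedge^a\cong\bigwedge^{c[n]-a}$ for $\Sp$), but it is an extra representation-theoretic input not established in the paper or in your sketch. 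The paper instead deduces (2) from (1) formally: if $\bm{d}$ is lexicographically minimal with $\Cavg^\ast(\bm{e},\bm{d};n)$ not converging, then the coefficient of $D_K(\bm{x})F^\ast$ attached to the lowest monomial of $D_K$ is a fixed linear combination of $\Cavg^\ast(\bm{e},\bm{d};n)$ (nonzero coefficient) and finitely many $\Cavg^\ast(\bm{e},\bm{d}';n)$ with $\bm{d}'<\bm{d}$, contradicting (1) and minimality. Both of your deviations are repairable, but as written they are gaps; your one genuine simplification, getting Axiom~F directly from $\tr_{\emptyset\otimes\emptyset}(n)\to T^0_{\emptyset\otimes\emptyset}$, is correct.
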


\begin{proof}
(1):
Fix $\bm{d}=(d_1,\dots,d_{K+Q})\in \ZZ^{K+Q}$.
By \eqref{EQN:expand-ratio-average} and the definition of $F^\ast$, we have
\begin{equation*}
F^\ast(\bm{\eps},\bm{x},\bm{y};n) \prod_{1\le u\le K} x_i^{(1-e_u) c[n]/2}
= \sum_{\rho,\mu\in \ptn}
m^0_\rho(\bm{x};n)
m^1_\mu(\bm{y})
\tr_{\rho \otimes \mu}(n).
\end{equation*}
By Lemma~\ref{LEM:skew-multiplicity-formula} and \eqref{EQN:define-full-stable-numerator},
\begin{equation*}
D_K(\bm{x}) \sum_{\rho,\mu\in \ptn}
m^0_\rho(\bm{x};n)
m^1_\mu(\bm{y})
\tr_{\rho \otimes \mu}(n)
= \sum_{0\le i\le 1}
\sum_{\substack{\rho,\mu\in \ptn: \\ \rho_1\le K,\; l(\mu)\le Q, \\ l(\rho) \le (c[n]-i)/2}}
N_\rho(\bm{x};n;i)
m^1_\mu(\bm{y})
\tr_{\det^i \otimes \rho \otimes \mu}(n).
\end{equation*}
By \eqref{EQN:expand-stable-numerator}, the last expression equals
\begin{equation*}
\sum_{0\le i\le 1}
\sum_{\substack{\rho,\mu\in \ptn: \\ \rho_1\le K,\; l(\mu)\le Q, \\ l(\rho) \le (c[n]-i)/2}}
m^1_\mu(\bm{y})
\tr_{\det^i \otimes \rho \otimes \mu}(n)
\sum_{\bm{\eps}\in \set{\pm 1}^K} N^{\bm{\eps}}_\rho(\bm{x};i)
\prod_{1\le i\le K} x_u^{(1-\eps_u) c[n]/2}.
\end{equation*}
If $c[n]$ is large enough in terms of $\bm{d}$,
the $\prod_{1\le u\le K} x_i^{(1-e_u) c[n]/2 + e_u d_u} \prod_{K+1\le u\le K+Q} y_{u-K}^{d_u}$ coefficient of the last display equals the $\prod_{1\le u\le K} x_i^{e_u d_u} \prod_{K+1\le u\le K+Q} y_{u-K}^{d_u}$ coefficient of
\begin{equation*}
\sum_{0\le i\le 1}
\sum_{\substack{\rho,\mu\in \ptn: \\ \rho_1\le K,\; l(\mu)\le Q, \\ \abs{d_1+\dots+d_K - \abs{\rho}} \le C_K, \\ \abs{\mu} \le d_{K+1}+\dots+d_{K+Q}}}
m^1_\mu(\bm{y})
\tr_{\det^i \otimes \rho \otimes \mu}(n)
N^{\bm{e}}_\rho(\bm{x};i),
\end{equation*}
in light of the degree properties of $m^1_\mu(\bm{y})$ and $N^{\bm{\eps}}_\rho(\bm{x};i)$ mentioned after Definition~\ref{DEFN:stable-numerators}.
Taking $n\to \infty$, and recalling the definition of $T^i_{\rho \otimes \mu}$ from Lemma~\ref{LEM:stable-interchange}, we deduce that this coefficient converges precisely to the $\prod_{1\le u\le K} x_i^{e_u d_u} \prod_{K+1\le u\le K+Q} y_{u-K}^{d_u}$ coefficient of $\mathsf{SN}(\bm{e},\bm{x},\bm{y})$.
Since $\bm{d}$ was arbitrary, (1) follows.


(2):
Let $\Cavg^\ast(\bm{\eps},\bm{d}) \defeq \lim_{n\to \infty} \Cavg^\ast(\bm{\eps},\bm{d};n)$.
Since $F^\ast(\bm{e},\bm{x},\bm{y};n) \in \CC[[x_u^{\eps_u}, \bm{y}]]$
for all $n$, we have $\Cavg^\ast(\bm{\eps},\bm{d}) = 0$ unless $d_1,\dots,d_{K+Q} \ge 0$.
Suppose for contradiction that there exists $\bm{d}\in \ZZ_{\ge 0}^{K+Q}$ such that $\Cavg^\ast(\bm{\eps},\bm{d})$ does not exist.
Assume $\bm{d}$ to be minimal with respect to the lexicographic order on $\ZZ^{K+Q}$.
Let $x_1^{\eps_1 l_1} \cdots x_K^{\eps_K l_K}$ be the lowest degree monomial appearing in $D_K(\bm{x})$, with respect to the lexicographic order on tuples $\bm{l} = (l_1,\dots,l_K) \in \ZZ^K$.
Then the $x_1^{\eps_1 (l_1+d_1)} \cdots x_K^{\eps_K (l_K+d_K)} y_1^{d_{K+1}} \cdots y_Q^{d_{K+Q}}$ coefficient of $D_K(\bm{x}) F^\ast(\bm{e},\bm{x},\bm{y};n)$
is a fixed $\CC$-linear combination of $\Cavg^\ast(\bm{\eps},\bm{d};n)$ (with a nonzero coefficient) and possibly some other terms $\Cavg^\ast(\bm{\eps},\bm{d}';n)$ with $\bm{d}' < \bm{d}$.
By (1) and the minimality assumption on $\bm{d}$, it follows that $\Cavg^\ast(\bm{\eps},\bm{d})$ exists,
which contradicts our assumptions.

(3):
Axiom~A is immediate from (2) and \eqref{EQN:approx-FE}.
In fact, under the substitution \eqref{EQN:xy-to-s-substitution},
it is clear that (2) implies $\Cavg(\bm{\eps},\bm{d}) = (-1)^{d_1+\dots+d_K} \Cavg^\ast(\bm{\eps},\bm{d})$,
whence $\lim_{n\to \infty} F^\ast(\bm{e},\bm{x},\bm{y};n) = \mathsf{F}(\bm{e},\bm{s})$ (see \eqref{EQN:define-F-series}).
Axiom~F follows from Axiom~A, since $\Cfam = \Cavg((1,\dots,1),\bm{0})$.

(4):
$\lim_{n\to \infty} D_K(\bm{x}) F^\ast(\bm{e},\bm{x},\bm{y};n)
= D_K(\bm{x}) \lim_{n\to \infty} F^\ast(\bm{e},\bm{x},\bm{y};n)$
by (2), since $D_K$ has finitely many terms.
Now (1) and (3) give the desired result.
\end{proof}

\begin{lemma}
[Polarized convergence]
\label{LEM:stable-polarized-bound}


Let $\bm{\eps}\in \set{\pm 1}^K$, $\bm{x}\in \CC^K$, and $\bm{y}\in \CC^Q$.
Assume
\begin{equation}
\label{INEQ:open-region-try-1}
q^{-\beta} < \abs{x_u} < q^\beta,
\quad \abs{y_v} < q^{-\gamma}/\Cbetti',
\quad q^{1/6} \min(1,\gamma)^{\Cstabtwo} \ge 2 (2 q^\beta \Cbetti)^{\Cstabtwo},
\end{equation}
where $\beta,\gamma>0$.
Then $\mathsf{SN}(\bm{\eps},\bm{x},\bm{y})$ converges absolutely uniformly
to a holomorphic function.
\end{lemma}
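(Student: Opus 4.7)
The plan is to show that the series defining $\mathsf{SN}(\bm{\eps},\bm{x},\bm{y})$ converges absolutely, uniformly on compact subsets of the open region \eqref{INEQ:open-region-try-1}; holomorphicity then follows from the Weierstrass $M$-test, since every summand $N^{\bm{\eps}}_\rho(\bm{x};i) m^1_\mu(\bm{y}) T^i_{\rho \otimes \mu}$ is a Laurent polynomial in $(\bm{x},\bm{y})$. First I would shrink to a compact subset on which $q^{-\beta_1} \le \abs{x_u} \le q^{\beta_1}$ and $\abs{y_v} \le q^{-\gamma_1}/\Cbetti'$ uniformly, for some $\beta_1 < \beta$ and $\gamma_1 > \gamma$; the third inequality in \eqref{INEQ:open-region-try-1} is monotone in $\beta$ and $\gamma$ in the useful direction, so it persists for $(\beta_1,\gamma_1)$.

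The key preliminary step is a $c[n]$-independent bound on $\abs{N^{\bm{\eps}}_\rho(\bm{x};i)}$. I would obtain this directly from the explicit formulas in Definition~\ref{DEFN:stable-skew-multiplicity}: the case $K=0$ is trivial, and the case $(G,K)=(\Orth,1)$ is immediate by inspection. For $G=\Sp$ I would expand the Weyl-type determinant inside $M^0_\rho(\bm{x};n;i) D_K(\bm{x})$ as a signed sum over permutations (as in the proof of Lemma~\ref{LEM:skew-multiplicity-formula}(3)) together with the $\set{\pm 1}^K$ sign choices that generate the $c[n]$-dependence; matching the latter choices against $\bm{\eps}$ isolates $N^{\bm{\eps}}_\rho(\bm{x};i)$ as a signed sum of at most $K!$ Laurent monomials in $\bm{x}^{\bm{\eps}}$, of total degree $\abs{\rho}+O_K(1)$ and with individual exponents of absolute value $O_K(l(\rho)+K)$. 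The upshot is $\abs{N^{\bm{\eps}}_\rho(\bm{x};i)} \le C_K \cdot q^{\beta_1 C'_K (l(\rho)+K)}$ uniformly on the compact subset, for constants $C_K, C'_K>0$ depending only on $K$.

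With this bound in hand, the rest mirrors the proof of Lemma~\ref{LEM:stable-gap-bounds}, with $N^{\bm{\eps}}_\rho$ in place of $M^0_\rho$. Using Lemma~\ref{LEM:stable-interchange} to write $T^i_{\rho \otimes \mu} = \sum_k (-1)^k T^i_{\rho \otimes \mu}(k)$, Lemma~\ref{LEM:stable-trace-bounds}(1) to restrict the support to $l(\rho)\le c[n_k]/2+Q$, and Lemma~\ref{LEM:stable-trace-bounds}(2) at $n=n_{k,2}$ to bound each $\abs{T^i_{\rho \otimes \mu}(k)}$, I would then control the $\mu$-sum by $\sum_\mu (\Cbetti')^{\abs{\mu}} \abs{m^1_\mu(\bm{y})} \chi^G_\mu(I_{c[n_{k,2}]}) \le 2^{Q c[n_{k,2}]}/\min(1,\gamma_1)^{Q c[n_{k,2}]}$ (via \eqref{EQN:sym-decompose} and Proposition~\ref{PROP:m^1_mu-degree-lower-bound}), and the $\rho$-sum by $\sum_\rho \chi^G_\rho(I_{c[n_{k,2}]}) \le 2^{K c[n_{k,2}]}$ (via \eqref{EQN:wedge-decompose} and $m^0_\rho(\bm{1};n_{k,2}) \ge 1$, which holds in the relevant range by Lemma~\ref{LEM:skew-multiplicity-formula}(2) and \eqref{INEQ:bound-cnk2}). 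Invoking the linear-in-$k$ bounds on $d(n_{k,2}),c[n_{k,2}],c[n_k]$ coming from \eqref{INEQ:linear-rank-growth}, \eqref{INEQ:linear-stability-range}, and \eqref{INEQ:bound-dnk2}, I reduce the full estimate to a geometric series in $k$ of ratio at most $(2q^{1/6})^{-1}$, summable thanks to \eqref{INEQ:open-region-try-1}.

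The hard part will be the $c[n]$-independent control of $\abs{N^{\bm{\eps}}_\rho(\bm{x};i)}$: unlike $M^0_\rho$, it is not directly given by a Weyl character formula, and its bound requires careful bookkeeping of the signed determinantal expansion in the $G=\Sp$ case. Once that bound is in place, the remainder is a mechanical variant of the proof of Lemma~\ref{LEM:stable-gap-bounds}.
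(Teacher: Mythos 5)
Your proposal is correct and follows essentially the same route as the paper's proof: reduce via the Weierstrass $M$-test (and Morera) to absolute uniform convergence of the triple sum $\sum_{i,\rho,\mu,k}\abs{N^{\bm{\eps}}_\rho m^1_\mu T^i_{\rho\otimes\mu}(k)}$, bound $\abs{N^{\bm{\eps}}_\rho(\bm{x};i)}$ by $O_K(1)$ monomials of total degree $\abs{\rho}+O_K(1)$, and then repeat the argument of Lemma~\ref{LEM:stable-gap-bounds}. The only cosmetic difference is that the paper simply asserts the term-count and multi-degree control on $N^{\bm{\eps}}_\rho$ (at most $2^{K^2}K!$ terms, exponents within $O(K^2)$ of $\rho'_u$ up to permutation), whereas you derive it from the signed determinantal expansion of $M^0_\rho D_K$ matched against $\bm{\eps}$ — which is exactly how that assertion is justified, so this is not a genuinely different approach.
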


\begin{proof}
By Lemma~\ref{LEM:stable-interchange}, the Weierstrass M-test, and Morera's theorem, it suffices to shows that the (countable) series
\begin{equation*}
\sum_{0\le i\le 1}
\sum_{\substack{\rho,\mu\in \ptn: \\ \rho_1\le K,\; l(\mu)\le Q}}
\sum_{k\ge 0}
\abs{N^{\bm{\eps}}_\rho(\bm{x};i)
m^1_\mu(\bm{y})
T^i_{\rho \otimes \mu}(k)}
\end{equation*}
converges uniformly\footnote{with respect to a fixed ordering of the terms in the sum} over $\bm{x}$ and $\bm{y}$.
For this, we mimic the strategy of Lemma~\ref{LEM:stable-gap-bounds}.
Any Laurent polynomial $N^{\bm{\eps}}_\rho(\bm{x};i) \in \CC[[x_u^{\eps_u}, \bm{y}]][x_u^{-\eps_u},\bm{y}^{-1}]$ has at most $4^{K(K-1)/2} 2^K K! = 2^{K^2} K!$ terms, each of which has multi-degree in $\prod_{1\le u\le K} [\rho^\prime_u - 10K^2, \rho^\prime_u + 10K^2]$ up to permutation.
Thus $\abs{N^{\bm{\eps}}_\rho(\bm{x};i)}
\le 2^{K^2} K! (q^\beta)^{\abs{\rho} + 10K^3}$.
Therefore, the last display is
\begin{equation*}
\begin{split}
&\ll_K \sum_{\substack{k\ge 0, \\ \rho\belongs (K^{c[n_k]/2 + Q}), \\ l(\mu)\le Q}}
(q^\beta)^{K (c[n_k]/2 + Q) + 10K^3}
m^1_\mu(q^{-\gamma},\dots,q^{-\gamma})
q^{-k/2} \Cbetti^{d(n_{k,2})} \chi^G_\rho \chi^G_\mu(I_{c[n_{k,2}]}) \\
&\ll_{q,\beta,K,Q} \sum_{k\ge 0}
q^{\beta K c[n_k]/2}
q^{-k/2} \Cbetti^{d(n_{k,2})}
\frac{2^{c[n_{k,2}] K} 2^{c[n_{k,2}] Q}}{\min(1,\gamma)^{c[n_{k,2}] Q}}
\le \sum_{k\ge 0} q^{-k/2}
\frac{(2 q^\beta \Cbetti)^{\Cstabtwo k + \Cstabthr}}
{\min(1,\gamma)^{\Cstabtwo k + \Cstabthr}},
\end{split}
\end{equation*}
with the final geometric series converging as in the proof of Lemma~\ref{LEM:stable-gap-bounds}.
\end{proof}

\begin{lemma}
[Polarization identity]
\label{LEM:polarization-identity}
Let $(\bm{x},\bm{y}),\bm{s}\in \CC^K \times \CC^Q$
with \eqref{EQN:xy-to-s-substitution}.
Let $\beta,\gamma>0$ with \eqref{INEQ:open-region-try-1}.
Then $\mathsf{F}(\bm{\eps},\bm{s})$ from \eqref{EQN:define-F-series} is meromorphic,
$\mathsf{MT}(\bm{s};n)$ from \eqref{EQN:define-MT-series} is holomorphic,
and
\begin{equation*}
\mathsf{ST}(\bm{x},\bm{y};n) = \mathsf{MT}(\bm{s};n).
\end{equation*}
\end{lemma}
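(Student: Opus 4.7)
The plan is to prove the identity by multiplying both sides by the auxiliary Laurent polynomial $D_K(\bm{x})$ and reducing to Lemma~\ref{LEM:key-formal-coefficient-calculation}(4). Starting from the definition of $\mathsf{ST}(\bm{x},\bm{y};n)$, I would combine \eqref{EQN:define-full-stable-numerator} and \eqref{EQN:expand-stable-numerator} to rewrite
\begin{equation*}
D_K(\bm{x}) M^0_\rho(\bm{x};n;i) = N_\rho(\bm{x};n;i) = \sum_{\bm{\eps}\in \set{\pm 1}^K} N^{\bm{\eps}}_\rho(\bm{x};i) \prod_{1\le u\le K} x_u^{(1-\eps_u) c[n]/2}.
\end{equation*}
Multiplying $\mathsf{ST}$ by $D_K(\bm{x})$, substituting, and interchanging the finite outer sum over $\bm{\eps}$ with the (absolutely convergent) inner sums over $i,\rho,\mu$, I would obtain
\begin{equation*}
D_K(\bm{x}) \mathsf{ST}(\bm{x},\bm{y};n) = \sum_{\bm{\eps}\in \set{\pm 1}^K} \biggl(\prod_{1\le u\le K} x_u^{(1-\eps_u) c[n]/2}\biggr) \mathsf{SN}(\bm{\eps},\bm{x},\bm{y}).
\end{equation*}

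Next, Lemma~\ref{LEM:key-formal-coefficient-calculation}(4) gives $\mathsf{SN}(\bm{\eps},\bm{x},\bm{y}) = D_K(\bm{x}) \mathsf{F}(\bm{\eps},\bm{s})$ as an identity of formal Laurent series; by Lemma~\ref{LEM:stable-polarized-bound} the left-hand side is holomorphic on the region, so this upgrades to an identity of meromorphic functions on the region, and in particular $\mathsf{F}(\bm{\eps},\bm{s}) = \mathsf{SN}(\bm{\eps},\bm{x},\bm{y})/D_K(\bm{x})$ is meromorphic. Canceling the nonzero factor $D_K(\bm{x})$ (off the codimension-one zero set, where the identity then extends by continuity) yields
\begin{equation*}
\mathsf{ST}(\bm{x},\bm{y};n) = \sum_{\bm{\eps}\in \set{\pm 1}^K} \biggl(\prod_{1\le u\le K} x_u^{(1-\eps_u) c[n]/2}\biggr) \mathsf{F}(\bm{\eps},\bm{s}).
\end{equation*}
Under the substitution \eqref{EQN:xy-to-s-substitution}, the prefactor equals $\prod_{\eps_u=-1} (-q^{1/2-s_u})^{c[n]} = \prod_{\eps_u=-1} (-1)^{c[n]} q^{c[n](1/2-s_u)}$, matching the prefactor in $\mathsf{MT}(\bm{s};n)$ from \eqref{EQN:define-MT-series} exactly and yielding $\mathsf{ST}(\bm{x},\bm{y};n) = \mathsf{MT}(\bm{s};n)$.

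Holomorphy of $\mathsf{MT}(\bm{s};n)$ then follows because $\mathsf{ST}(\bm{x},\bm{y};n)$ is itself visibly holomorphic on the region: absolute uniform convergence of its defining series can be established by the same strategy as Lemma~\ref{LEM:stable-polarized-bound}, using the bound on $\abs{M^0_\rho}$ from Lemma~\ref{LEM:stable-multiplicity-bound} in place of the ad hoc bound on $\abs{N^{\bm{\eps}}_\rho}$ used there (equivalently, one can appeal to Lemma~\ref{LEM:stable-gap-bounds} with $U=0$). The main obstacle, in my view, is not any single algebraic step but the bookkeeping: one must carefully distinguish when identities hold as formal Laurent series versus as analytic functions, and justify each interchange of summation using the uniform absolute convergence afforded by the hypothesis $q^{1/6} \min(1,\gamma)^{\Cstabtwo} \ge 2 (2 q^\beta \Cbetti)^{\Cstabtwo}$ in \eqref{INEQ:open-region-try-1}. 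Once that bookkeeping is in place, the proof is essentially a polarization identity reading off the $(1-\eps_u)c[n]/2$-degree components in $x_u$.
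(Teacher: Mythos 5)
Your proposal is correct and follows essentially the same route as the paper: multiply $\mathsf{ST}$ by $D_K(\bm{x})$, expand via \eqref{EQN:define-full-stable-numerator} and \eqref{EQN:expand-stable-numerator} into the $\mathsf{SN}(\bm{\eps},\cdot)$ pieces, invoke Lemma~\ref{LEM:key-formal-coefficient-calculation}(4) together with the holomorphy from Lemma~\ref{LEM:stable-polarized-bound} to identify $\mathsf{SN}/D_K$ with $\mathsf{F}$, then divide by $D_K$ and match the prefactor $\prod_{\eps_u=-1} x_u^{c[n]} = \prod_{\eps_u=-1}(-1)^{c[n]}q^{c[n](1/2-s_u)}$ with \eqref{EQN:define-MT-series}. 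The only point the paper spells out a bit more carefully is the upgrade from the formal identity to meromorphy of $\mathsf{F}$ (via absolute convergence of the power series for $\mathsf{SN}$ and the integral-domain argument identifying its expansion with the formal series \eqref{EQN:define-F-series}), which you flag as bookkeeping and which indeed goes through as you describe.
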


\begin{proof}





By Lemma~\ref{LEM:stable-gap-bounds} (and its proof),
the Weierstrass M-test,
and Morera's theorem,
$\mathsf{ST}(\bm{x},\bm{y};n)$
converges absolutely uniformly to a holomorphic function.
On the other hand, by \eqref{EQN:expand-stable-numerator} and Lemma~\ref{LEM:stable-polarized-bound}, the following identity of holomorphic functions holds:
\begin{equation*}
\mathsf{ST}(\bm{x},\bm{y};n) D_K(\bm{x})
= \sum_{\bm{\eps}\in \set{\pm 1}^K} \mathsf{SN}(\bm{\eps},\bm{x},\bm{y}) \prod_{1\le i\le K} x_i^{(1-\eps_i) c[n]/2}.
\end{equation*}
Lemma~\ref{LEM:stable-polarized-bound} actually also implies that the power series $(z_1^{\eps_1}\cdots z_K^{\eps_K})^{10K^2} \mathsf{SN}(\bm{\eps},\bm{z},\bm{y}) \in \CC[[z_u^{\eps_u}, \bm{y}]]$ is absolutely convergent for $\abs{z_u^{\eps_u}}<q^\beta$ and $\abs{y_v}<q^{-\gamma}$.
By Lemma~\ref{LEM:key-formal-coefficient-calculation}(4) and the factorization of $D_K$,
the ratio $\mathsf{SN}(\bm{\eps},\bm{z},\bm{y})/D_K(\bm{z})$ is therefore holomorphic for $0<\abs{z_u^{\eps_u}}<1$ and $\abs{y_v}<q^{-\gamma}$.
The series expansion of $\mathsf{SN}(\bm{e},\bm{x},\bm{y})/D_K(\bm{x})$ must then coincide with the formal series $\mathsf{F}(\bm{e},\bm{s})$,
since $\CC[[z_u^{\eps_u}, \bm{y}]]$ is an integral domain.
This makes $\mathsf{F}(\bm{e},\bm{s})$ meromorphic.
Dividing the last display by $D_K$
and plugging in \eqref{EQN:xy-to-s-substitution}
concludes the proof, by \eqref{EQN:define-MT-series}.
\end{proof}

\begin{proof}
[Proof of Theorem~\ref{THM:general-axiomatization}]

Let $(\bm{x},\bm{y}),\bm{s}\in \CC^K \times \CC^Q$
with \eqref{EQN:xy-to-s-substitution}.
Assume \eqref{INEQ:convenient-q-size-assumption} and \eqref{INEQ:q-restricted-ratios-region}.
Let $\beta = \delta > 0$ and $\gamma = q^{-\delta} < 1$.
Since $q\ge 2^{12} (2\Cbetti)^{1/(2\delta)}$, we have
\begin{equation*}
q\ge 2^6 (2\Cbetti)^{1/(4\delta)} q^{1/2}
= 2^6 (2q^{2\delta}\Cbetti)^{1/(4\delta)}
= 2^6 (2q^\beta\Cbetti/\gamma)^{\max(6\Cstabtwo, 42(K+Q)\Crank^3\Cslope)},
\end{equation*}
where in the final step we plug the definition of $\delta$ into the exponent $1/(4\delta)$.
In particular, \eqref{INEQ:q-conditions-try-1} and \eqref{INEQ:open-region-try-1} both hold.
Plugging \eqref{EQN:expand-ratio-average} and Lemma~\ref{LEM:polarization-identity}
into Lemma~\ref{LEM:trace-formulation-of-main-theorem},
we therefore deduce that the left-hand side of \eqref{INEQ:main-general-goal} is
\begin{equation*}
\le 5 q^{\Cstabfou}
(2 q^\beta \Cbetti/\gamma)^{\Crank d(n) (K+Q)}
q^{-d(n)/(6\Crank^2\Cslope)}
\le 5 q^{\Cstabfou}
(q/2^6)^{d(n)/(42\Crank^2\Cslope)}
q^{-d(n)/(6\Crank^2\Cslope)},
\end{equation*}
since $(2q^\beta\Cbetti/\gamma)^{42(K+Q)\Crank^3\Cslope} \le q/2^6$.
This suffices for \eqref{INEQ:main-general-goal}, since $\frac16 - \frac1{42} = \frac{1}{7}$.
Furthermore, the desired analytic properties of $\mathsf{F}(\bm{\eps},\bm{s})$ and $\mathsf{MT}(\bm{s};n)$ follow from Lemma~\ref{LEM:polarization-identity}.
\end{proof}

\section{The quadratic case}
\label{SEC:quadratic-analysis}


We now specialize to the classical quadratic Dirichlet $L$-function setting introduced in \S\ref{SEC:intro}.
For $n\ge 1$, define the \emph{unordered configuration space}
\begin{equation*}
\map{UConf}_n(\Aff^1) \defeq
\set{(a_1,\dots,a_n): \disc(t^n+a_1t^{n-1}+\dots+a_n)\ne 0} \belongs \Aff^n_{\FF_q}.
\end{equation*}
Then $\map{UConf}_n(\Aff^1)(\FF_q) = \mscr{P}_n$.
Now let $q$ be odd.
Let $p_n\maps \mcal{C}_n\to \map{UConf}_n(\Aff^1)$ be the universal affine hyperelliptic curve $$y^2 = t^n+a_1t^{n-1}+\dots+a_n$$ (punctured at $t=\infty$);
let $\ol{p}_n\maps \ol{\mcal{C}}_n \to \map{UConf}_n(\Aff^1)$ be the usual smooth proper model thereof.
Let (cf.~\cite{bergstrom2023hyperelliptic}*{\S8.1.4 and \S9.1.3})
\begin{equation*}
\VV_n\defeq R^1(p_n)_!\ol{\QQ}_\ell(1/2),
\quad \VV^0_n\defeq R^1(\ol{p}_n)_!\ol{\QQ}_\ell(1/2),
\end{equation*}
say with $\ell=2\nmid q$.
By Poincar\'{e} duality, $\VV^0_n$ is a pure, weight $0$, symplectic local system on $\map{UConf}_n(\Aff^1)$ of rank $2\floor{(n-1)/2} \in \set{n-2, n-1}$.
There is an \emph{excision exact sequence}
\begin{equation*}
0 \to \ol{\QQ}_\ell(1/2)^{n-1-2\floor{(n-1)/2}} \to \VV_n \to \VV^0_n \to 0,
\end{equation*}
as recorded in \cite{bergstrom2023hyperelliptic}*{\S8.1.4}, because $\mcal{C}_n$ has $n-2\floor{(n-1)/2} \in \set{1,2}$ punctures.

For simplicity, we focus on $n=2g+1$ odd, in which case $\VV_n = \VV^0_n$.
A similar analysis may be possible for $n=2g+2$ even, after accounting for the difference between $\VV_n$ (which is mixed) and $\VV^0_n$.
The purity assumption in Axiom~L prevents us from handling both parities simultaneously,
but it lets us keep \S\ref{SEC:general-framework} simple
(as we prefer for the present note).

\subsection{Proof of Theorem~\ref{THM:apply-stability-1}}

Fix integers $K,Q\ge 0$.
We now put the local systems $\VV_{2g+1}$ into the framework of \S\ref{SEC:general-framework}, indexing by $g\ge Q$ for convenience (instead of by $n\ge 1$ as in \S\ref{SEC:general-framework}).
For $g\ge Q$, let
\begin{equation*}
\mcal{P}_g \defeq \map{UConf}_{2g+1}(\Aff^1),
\quad V_g \defeq \VV_{2g+1}.
\end{equation*}
Then $\mcal{P}_g$ has dimension $d(g) = 2g+1$, and $V_g$ has rank $c[g] = 2g$.
Thus Axiom~L holds with $G=\Sp$ and $\Crank=1$.
Axiom~B holds with $(\Cbetti,\Cbetti') = (2,1)$, by \cite{bergstrom2023hyperelliptic}*{\S1.3.9 and Lemma~11.3.13}.
Axiom~S, with $(\Cslope,\Cstart) = (12,13)$, holds by \cite{MPPRW}*{Theorem~1.4}.

Implicitly, in justifying Axioms~B and~S, we rely on certain \emph{comparison isomorphisms} and \emph{stabilization maps} from \cite{bergstrom2023hyperelliptic}.
The former come from careful use of Artin's comparison theorem as in \cite{bergstrom2023hyperelliptic}*{\S\S8.1.3--8.1.4}
or \cite{ellenberg2016homological}*{proof of Proposition~7.7}.
The latter, i.e.~the stabilization maps, are $\Gal(\ol{\FF}_q/\FF_q)$-equivariant maps
\begin{equation}\label{MAP:increment-log-conductor-g}
H_k(\mcal{P}_g \otimes \ol{\FF}_q, \Schur^G_\lambda(V_g))
\to H_k(\mcal{P}_{g+1} \otimes \ol{\FF}_q, \Schur^G_\lambda(V_{g+1}))
\end{equation}
for $k\ge 0$ and $\lambda\in \ptn$, defined using log geometry
following \cite{bergstrom2023hyperelliptic}*{\S1.3.12, \S8.5, \S9.1.6}.
Here are the key points underlying \eqref{MAP:increment-log-conductor-g}:
\begin{enumerate}
    \item There is no obvious morphism $\mcal{P}_g \to \mcal{P}_{g+1}$,
    but after passing to suitable (Abramovich--Corti--Vistoli) modular compactifications there is a satisfactory \emph{log morphism}.
    \item The local systems $\Schur^G_\lambda(V_g)$ for $g\ge Q$ then fit into a \emph{coefficient system} $V^\infty_\lambda$ by \cite{bergstrom2023hyperelliptic}*{\S3.4.4, \S8.6.4, \S9.1.1}, via certain \emph{gluing} and \emph{collapse} maps.
\end{enumerate}
One can also define \eqref{MAP:increment-log-conductor-g} topologically, but it is essential for us that \eqref{MAP:increment-log-conductor-g} is compatible with the action of the geometric Frobenius element $\Fr_q\in \Gal(\ol{\FF}_q/\FF_q)$.
Of course, in many cases, both sides of \eqref{MAP:increment-log-conductor-g} actually vanish, and then $\Fr_q$-equivariance holds trivially.

In any case, with Axioms~L, B, and~S in hand, we may now apply Theorem~\ref{THM:general-axiomatization}.
By \eqref{EQN:define-Cstab1}, \eqref{EQN:define-Cstab2}, and \eqref{INEQ:convenient-q-size-assumption},
we have $\Cstabone = 2 \max(1, K+Q)$, $\Cstabtwo = K \Cslope + \Cstabone \Cslope$, and
\begin{equation}
\label{EQN:quadratic-specialized-delta-value}
\delta = \frac{1}{24 \Cslope \max(K + \Cstabone, 7(K+Q))}
= \frac{1}{288 \max(2, 7(K+Q))},
\quad \frac{d(g)}{7\Crank^2\Cslope} = \frac{2g+1}{84}.
\end{equation}
If $q \ge 2^{12} 2^{1/\delta}$, then $q \ge 2^{12} (2\Cbetti)^{1/(2\delta)}$, since $\Cbetti = 2$.
Now, since $c[g] \ge 2Q$ for all $g\ge Q$,
Theorem~\ref{THM:apply-stability-1} for $n=2g+1\ge 2Q+1$
follows immediately from
Theorem~\ref{THM:general-axiomatization} for $g\ge Q$.


\subsection{Proof of Corollary~\ref{COR:integrate-stability-1}}


Let $K=0$ and $Q=1$ in Theorem~\ref{THM:apply-stability-1}.
Then $\mathsf{RR}_L(s;n) = \card{\mscr{P}_n}$ by \cite{florea2021negative}*{(59)},
so
\begin{equation*}
\frac{1}{\card{\mscr{P}_n}} \sum_{d\in \mscr{P}_n} \frac{1}{L(s,\chi_d)}
= 1
+ O(q^{O_{K,Q}(1)} q^{-\omega n})
\end{equation*}
for $\Re(s) > \frac12 + q^{-\delta}$.
For $q$ large enough in terms of $I$,
we have $2q^{-\delta} \max(I) \le \omega/2$.
Now multiply the last display by $(q^R)^{s - 1/2}$,
and average over the line $\Re(s) = \frac12+2q^{-\delta}$.

\subsection{Proof of Theorem~\ref{THM:one-level-density}}

We will roughly follow \cite{andrade2014conjectures}*{\S7} (cf.~\cite{conrey2007applications}*{\S3}),
but it would be interesting to know if a more direct approach is possible (which might then extend more easily to $K$-level correlations for $K\ge 2$).
Let $K=Q=1$ and $\delta^\ast_q \defeq 2q^{-\delta}$.
Assume
$2\nmid q\ge 2^{12} 2^{1/\delta}$
and $\delta^\ast_q
\le 0.4\delta$,
by taking $q\gg_\delta 1$.
Write $n=2g+1$, where $g\ge 1$, and let
\begin{equation*}
R(s_1,s_2;n)
\defeq \sum_{d\in \mscr{P}_n} \frac{L(s_1,\chi_d)}{L(s_2,\chi_d)}.
\end{equation*}
Cauchy's integral formula,
followed by \eqref{INEQ:main-quadratic-goal} for $K=Q=1$,
tells us that if $$\Re(s_1) = \Re(s_2) = \frac12 + \delta^\ast_q,$$ then
\begin{equation*}
\begin{split}
\frac{\partial (R(s_1,s_2;n) - \mathsf{RR}_L(s_1,s_2;n))}{\partial s_1}
&= \frac{\int_{(\delta/2)} - \int_{(-\delta/2)}}{2\pi i} \frac{R(s_1+z,s_2;n) - \mathsf{RR}_L(s_1+z,s_2;n)}{z^2} \, dz \\
&\ll \frac{1}{\log{q}} \frac{q^{O(1)} q^{(1-\omega)n}}{\delta^2},
\end{split}
\end{equation*}
where as in \cite{andrade2014conjectures}*{\S7} we let $(c)$ denote the vertical path from $c - \frac{\pi i}{\log{q}}$ to $c + \frac{\pi i}{\log{q}}$.

For simplicity, write $A^1(s,s) \defeq \frac{\partial A(s_1,s_2)}{\partial s_1} \vert_{s_1=s_2=s}$
for any function $A$ of $(s_1,s_2)$.
Let $F(x;n) \defeq \sum_{k\in \ZZ} f(\frac{\log(q^{n-1})}{2\pi} (x + \frac{2\pi k}{\log{q}}))$ be the natural normalized $q$-adic periodization of $f$, as in \cite{rudnick2008traces}*{\S1.1}.
By Cauchy's residue theorem, the left-hand side of \eqref{1-level-density-convergence} equals
\begin{equation}
\label{residue-integral}
\frac{\int_{(1/2 + \delta^\ast_q)} - \int_{(1/2 - \delta^\ast_q)}}{2\pi i}
\frac{F((s-1/2)/i;n)}{\card{\mscr{P}_n}} R^1(s,s;n)\, ds.
\end{equation}
But $L(s,\chi_d) = q^{(n-1)(1/2 - s)} L(1-s,\chi_d)$ by \eqref{EQN:FE} (since $2\nmid n$),
so $$L'(s,\chi_d)/L(s,\chi_d) = -\log(q^{n-1}) - L'(1-s,\chi_d)/L(1-s,\chi_d).$$
Furthermore, $F(x;n) = F(-x;n)$,
since $g$ and $f$ are even.
Thus \eqref{residue-integral} equals
\begin{equation*}
\frac{\int_{(1/2 - \delta^\ast_q)}}{2\pi i} F((s-1/2)/i;n)
\log(q^{n-1})\, ds
+ \frac{\int_{(1/2 + \delta^\ast_q)}}{\pi i}
\frac{F((s-1/2)/i;n)}{\card{\mscr{P}_n}} R^1(s,s;n)\, ds.
\end{equation*}
The first term equals $\log(q^{n-1}) \frac{\int_\RR}{2\pi} f(\frac{\log(q^{n-1})}{2\pi} x)\, dx = \int_\RR f(x)\, dx$.
The second term is
\begin{equation*}
\frac{\int_{(1/2 + \delta^\ast_q)}}{\pi i}
\frac{F((s-1/2)/i;n)}{\card{\mscr{P}_n}} \mathsf{RR}^1_L(s,s;n)\, ds
+ O{\left(\frac{\sup_{x\in \RR} \abs{F(x - i\delta^\ast_q;n)}}{\card{\mscr{P}_n} \log{q}}
\frac{1}{\log{q}} \frac{q^{O(1)} q^{(1-\omega)n}}{\delta^2}\right)}.
\end{equation*}

Suppose $I\belongs [-\lambda,\lambda]$.
For $x\in \CC$, we have $\abs{f(x)} \le e^{\abs{2\pi\lambda\Im(x)}} \int_{\xi\in \RR} \abs{g(\xi)}\, d\xi$ trivially,
and $\abs{f(x)} \le \abs{2\pi x}^{-2} e^{\abs{2\pi\lambda\Im(x)}} \int_{\xi\in \RR} \abs{g''(\xi)}\, d\xi$ by integration by parts in $\xi$.
So if $x\in \RR$, then
\begin{equation*}
\begin{split}
\abs{F(x - i\delta^\ast_q;n)}
&\ll_g \sum_{k\in \ZZ} \frac{e^{\lambda \log(q^{n-1}) \delta^\ast_q}}
{\max(1, \abs{\log(q^{n-1})\, (x + \frac{2\pi k}{\log{q}})}^2)} \\
&\le q^{(n-1) \lambda \delta^\ast_q}
\left(1 + \sum_{k\ne 0} \frac{1}{\abs{\log(q^{n-1})\, (\frac{\pi k}{\log{q}})}^2}\right)
\ll q^{(n-1) \lambda \delta^\ast_q}
\le q^{2n\lambda q^{-\delta}}.
\end{split}
\end{equation*}
Therefore, the penultimate display is
\begin{equation*}
\frac{\int_{(1/2 + \delta^\ast_q)}}{\pi i}
\frac{F((s-1/2)/i;n)}{\card{\mscr{P}_n}} \mathsf{RR}^1_L(s,s;n)\, ds
+ o_{g,\delta,q; n\to \infty}(1),
\end{equation*}
assuming $2\lambda q^{-\delta} < \omega$ (as we may do by taking $q\gg_\lambda 1$).

To complete our analysis of the residue integral \eqref{residue-integral},
it remains only to show that
\begin{equation*}
\frac{\int_{(1/2 + \delta^\ast_q)}}{\pi i}
\frac{F((s-1/2)/i;n)}{\card{\mscr{P}_n}} \mathsf{RR}^1_L(s,s;n)\, ds
\to - \int_\RR \frac{\sin(2\pi x)}{2\pi x} f(x)\, dx
\end{equation*}
as $n\to \infty$.
This is already done in some form in \cite{andrade2014conjectures}*{\S7}, without any $q$-restriction on the size of $\lambda$.
Specifically, they show that the left-hand side equals
\begin{equation*}
\frac{\int_{-i\infty}^{i\infty}}{\pi i}
f{\left(\frac{\log(q^{n-1}) s}{2\pi i}\right)}
\left(\frac{\zeta'_{\FF_q[t]}(1+2s)}{\zeta_{\FF_q[t]}(1+2s)}
+ A^1_D(s,s)
- q^{-(n-1)s} (\log{q})\zeta_{\FF_q[t]}(1-2s) A_D(-s,s) \right)\, ds,
\end{equation*}
where $A_D(\alpha,\gamma)$ is an explicit, absolutely convergent Euler product for $\Re(\alpha),\Re(\gamma) > -\frac14$,
and where $A_D(\alpha,\alpha) = 1$.
Here
\begin{equation*}
\begin{split}
\zeta_{\FF_q[t]}(1-2s) = 1/(1-q^{2s}) &= (-2s\log{q})^{-1} + O(1), \\
\zeta'_{\FF_q[t]}(1+2s)/\zeta_{\FF_q[t]}(1+2s) &= (-2s)^{-1} + O(1), \\
\end{split}
\end{equation*}
for $\abs{s} \le \pi/(10\log{q})$,
so up to $o_{f,q; n\to \infty}(1)$, the penultimate display is
\begin{equation*}
\frac{\int_{-i\pi/(10\log{q})}^{i\pi/(10\log{q})}}{\pi i}
f{\left(\frac{\log(q^{n-1}) s}{2\pi i}\right)}
\frac{(1
- q^{-(n-1)s} (\log{q})(\log{q})^{-1})\, ds}{-2s},
\end{equation*}
which in turn simplifies, up to $o_{f,q; n\to \infty}(1)$, to
\begin{equation*}
\frac{\int_{-i\infty}^{i\infty}}{\pi i}
f{\left(\frac{\log(q^{n-1}) s}{2\pi i}\right)}
(q^{-(n-1)s}-1)\, \frac{ds}{2s}
= \frac{\int_{-\infty}^{\infty}}{\pi i}
f(x)
(e^{-2\pi ix}-1)\, \frac{dx}{2x}.
\end{equation*}
This suffices,
because (1) $e^{-2\pi ix} = \cos(2\pi x) - i\sin(2\pi x)$ and (2) $f$ and $\cos$ are even.

\appendix

\section{Discussion of geometric families}

There are many interesting families aside from the hyperelliptic ensemble featured in \S\ref{SEC:quadratic-analysis}.
For example, one can ask about moduli spaces of curves, abelian varieties, hypersurfaces, complete intersections, or fibered varieties thereof.
Some of these are known to be quite difficult.
For instance, $\mcal{M}_g$ (the moduli space of genus $g$ curves) fails Axiom~B,
and in fact Axiom~F remains open; see \cites{lipnowski2018large,chan2021tropical}, and references therein, for details.
Even though $\mcal{M}_g$ satisfies Axiom~S after decoration
\cite{bergstrom2023hyperelliptic}*{\S1.3.9\cite{MPPRW}*{Theorem~1.1}},
we really also need Axiom~B in order for the present proof of Theorem~\ref{THM:general-axiomatization} to work.

An interesting class of families are the (function-field analogs of the) \emph{Sarnak--Shin--Templier geometric families} put forth in \cite{sarnak2016families}.
Roughly speaking, these are \emph{linear} parametric families of $L$-functions of varieties fibered over $\Aff^1_{\FF_q}$.
For these, Axioms~F and~A are often already provable in practice, by the square-free sieve \cite{poonen2003squarefree} and the Deligne--Katz equidistribution theorem.
Therefore, there is hope that Axioms~B and~S could be true in many interesting cases.
Here is a list of some possible examples to consider:
\begin{enumerate}
    \item Families of $L(s,\chi_d)$ or $\chi_d$ twists with congruence conditions, like $d_0\mid d$ or $d\equiv 1\bmod{d_0}$.
    This might be useful for sieving purposes, or for filtering out root numbers.

    \item Quadratic and cubic twist families of elliptic curve $L$-functions, as considered in e.g.~\cite{meisner2022low}.
    Other families of elliptic curve $L$-functions could also be interesting.

    \item Branched covers of toric varieties.
    For instance, one could study $L$-functions associated to equations of the form $y^k = D(c_1,\dots,c_n)$, where $k$ and $D$ are fixed and $c_1,\dots,c_n\in \FF_q[t]$ are varying.
    The ``discriminant'' $D$ could essentially be any polynomial.

    \item Examples from the circle method,
    as discussed in \cite{BGW2024forthcoming}.
\end{enumerate}











A good strategy so far for proving Axiom~B has been to consider ``root covering'' maps
such as $\map{Conf}_n(\Aff^1) \to \map{UConf}_n(\Aff^1)$.
Roughly speaking, this map pulls back the irreducible (discriminant) boundary downstairs to a highly symmetric, reducible boundary upstairs, which one may then hope is simpler to study.
In addition to \cite{bergstrom2023hyperelliptic}*{proof of Lemma~11.3.13},
see \cite{sawin2020representation}*{proof of Lemma~2.11} in the case of moments,
or \cite{sawin2022square} for certain ratios (possibly all ratios without conjugates),
in certain harmonic families.
Sawin has also suggested to us that characteristic cycles as in \cite{sawin2022square} could be helpful in general,
or even essential in harmonic families at least.

For Axiom~S, one would like to generalize the strategy of \cite{MPPRW},
which is based heavily on a precise understanding of near-surjective monodromy representations and their images.
One would hope for classical topological or geometric methods to shed light here.

An important issue we have glossed over is Axiom~L;
in practice, one often has to break up an initial family into smaller pieces on which the $L$-functions can be described by a local system (and not just a constructible sheaf).
A general strategy for this is to use the square-free sieve \cite{poonen2003squarefree} and the Krasner-type lemma of \cite{kisin1999local} to reduce to considering families where the discriminant is square-free away from a finite set $S$.
On $S$ one can often fix local $L$-factors using \cite{kisin1999local},
and away from $S$ one can often use tools like Proposition~\ref{PROP:square-free-case} to explicitly compute all $L$-factors in a concrete, uniform geometric manner.

\begin{proposition}
\label{PROP:square-free-case}
Let $K$ be a non-archimedean local field,
with ring of integers $\OK$
and residue field $k \cong \FF_q$.
Let $X$ be a proper $\OK$-scheme,
whose generic fiber $X_K$ is smooth of dimension $n\ge 1$.
Then each of the following implies the next:
\begin{enumerate}
\item $X$ is a hypersurface,
$\map{char}(\FF_q) \ne 2$,
and the discriminant of $X$ has valuation $\le 1$.

\item Either $X$ is smooth over $\mcal{O}$, or (i)--(iii) hold:
(i) $X$ is regular,
(ii) $X_k$ has a unique singular point $x_0$,
and (iii) $x_0$ is a non-degenerate double point of $X_k$.
Furthermore, $H^i(X_K)$ has pure monodromy filtration for every integer $i\ge 0$.
In addition, for all $i\ge 0$ with $i\ne n$, we have $H^i(X_K) \cong H^i(\PP^n_K)$ and $H^i(X_k) \cong H^i(\PP^n_k)$ in $\ell$-adic cohomology for every prime $\ell\nmid q$.

\item 
$\det(1 - T \Fr_q, H^i(X_K)^I)
= \det(1 - T \Fr_q, H^i(X_k))$
for all $i\ge 0$.
\end{enumerate}
\end{proposition}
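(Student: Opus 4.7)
The plan is to prove $(1)\Rightarrow(2)$ by local analysis of the defining equation at a singular point of $X_k$ together with Lefschetz hyperplane and Picard--Lefschetz theory, and to prove $(2)\Rightarrow(3)$ via nearby cycles, with the purity hypothesis bridging the inertia invariants on $H^i(X_K)$ and ordinary $H^i(X_k)$.

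For $(1)\Rightarrow(2)$, write $X\belongs \PP^{n+1}_{\OK}$ cut out by a single homogeneous polynomial $F$. Up to a unit, the discriminant of $X$ equals the resultant of the partial derivatives $\partial F/\partial x_i$, so its $\OK$-valuation counts singular points of $X_k$ with their local Jacobian multiplicity; in residue characteristic $\ne 2$ this multiplicity sees exactly the non-degeneracy of the Hessian. Thus discriminant valuation $0$ forces smoothness of $X/\OK$, while valuation $1$ forces a unique singularity $x_0\in X_k$ at which the Hessian is non-degenerate, i.e.\ an ordinary double point. Regularity of $X$ at $x_0$ is then immediate: in \'etale-local coordinates, $F$ reads $\pi + Q(y)+O(y^3)$ with $\pi\in\OK$ a uniformizer and $Q$ a non-degenerate quadratic form, whose zero locus is manifestly a regular scheme. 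The Lefschetz hyperplane theorem on the smooth $X_K$, and a small resolution (or Milnor-fiber computation) on $X_k$ at the isolated ODP, yield $H^i(X_K)\cong H^i(\PP^n_K)$ and $H^i(X_k)\cong H^i(\PP^n_k)$ for $i\ne n$. Finally, purity of the monodromy filtration reduces to the Picard--Lefschetz formula: the action on $H^n(X_K)$ is the identity plus a rank one transvection along the vanishing cycle, so $(T-I)^2=0$ and $N\defeq \log T$ satisfies $N^2=0$; as the vanishing cycle is primitive of Hodge-theoretic middle weight, the induced weight filtration is pure.

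For $(2)\Rightarrow(3)$, the smooth case is proper smooth base change (both sides being $H^i(X_k)$ with trivial inertia action). In the ODP case, work on $X_k$ with the nearby cycles complex $R\Psi\,\ol{\QQ}_\ell$, whose hypercohomology computes $H^\bullet(X_K)$ equivariantly for Galois. The cone of the natural map $\ol{\QQ}_\ell\to R\Psi\,\ol{\QQ}_\ell$ is the vanishing cycles complex $R\Phi$, supported at $x_0$ and concentrated in a single degree with rank one stalk, by Picard--Lefschetz for an ODP. The resulting long exact sequence
\begin{equation*}
\cdots\to H^{i-1}(R\Phi)\to H^i(X_k)\to H^i(X_K)\to H^i(R\Phi)\to\cdots
\end{equation*}
together with the $I$-equivariance of all maps, and the fact (from Picard--Lefschetz) that $I$ acts on the stalk of $R\Phi$ through tame inertia via a unipotent (or signed) character, shows that for $i\ne n$ the specialization map $H^i(X_k)\to H^i(X_K)^I$ is already an isomorphism. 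For $i=n$, purity of the monodromy filtration, i.e.\ that $\ker N=H^n(X_K)^I$ is pure of weight $n$ and that the cokernel/kernel contributions from $R\Phi$ lie in weights that preclude any non-trivial $I$-invariants other than those coming from $H^n(X_k)$, implies equality of the characteristic polynomials of $\Fr_q$ on $H^n(X_k)$ and $H^n(X_K)^I$.

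The main obstacle I expect is the final $i=n$ step in $(2)\Rightarrow(3)$: bookkeeping the Rapoport--Zink (or nearby-cycles) long exact sequence with the Picard--Lefschetz $I$-action carefully enough to verify that purity alone forces the characteristic polynomial identity. The crucial input is the local invariant cycles theorem, which identifies $H^n(X_k)$ with a Frobenius-stable subspace of $H^n(X_K)^I$; combined with $N^2=0$ and the purity of the associated graded, one concludes that the defect $H^n(X_K)^I/H^n(X_k)$ (or its analogue via the vanishing cycles contribution) has Frobenius eigenvalues that are already accounted for, yielding the desired equality of characteristic polynomials.
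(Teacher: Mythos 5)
Your proposal has genuine gaps in both implications, and they sit exactly where the real content lies. For (1)$\Rightarrow$(2), the statement that $H^i(X_K)$ has pure monodromy filtration is the weight--monodromy property, which the paper obtains by citation (Scholze in mixed characteristic, Deligne in equal characteristic); your Picard--Lefschetz argument does not prove it. For $n$ even the local monodromy at an ordinary double point is a reflection of order two (inertia acts through a quadratic character), so $(T-I)^2=0$ is simply false in general and $N=\log T$ is not the relevant operator; for $n$ odd, $N^2=0$ only bounds the length of the monodromy filtration, whereas purity is the assertion that each graded piece $\operatorname{gr}^M_j$ is pure of Frobenius weight $j$ --- in particular that the one-dimensional image of $N$ has weight $n-1$ --- and ``the vanishing cycle is primitive of Hodge-theoretic middle weight'' is not an $\ell$-adic argument, least of all over a $p$-adic field. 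Similarly, the discriminant step is essentially Poonen's valuation theorem (one must get regularity of the total space, i.e.\ that the constant term of the local equation is exactly a uniformizer, not just locate an ODP on $X_k$), and your appeal to a small resolution of $X_k$ is shaky since small resolutions of an ODP need not exist; the paper instead quotes known computations of the cohomology of a hypersurface with one non-degenerate double point.

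For (2)$\Rightarrow$(3) your route is genuinely different from the paper's: the paper blows up $x_0$ to obtain a semistable model, applies the G\"ortz--Rapoport semi-simple-trace formalism and the Grothendieck--Lefschetz formula to express the alternating semi-simple trace as a point count involving the exceptional $\PP^n$ and the smooth quadric $Z$, and only then uses the purity hypothesis, via Rapoport's recipe, to convert semi-simple traces into traces on inertia invariants, with an explicit case analysis to handle possible cancellation of the eigenvalue $q^{n/2}$. The step you defer as the ``main obstacle'' is precisely this substance, and the tool you propose, the local invariant cycle theorem, does not fill it: that theorem asserts surjectivity of the specialization map $H^n(X_k)\to H^n(X_K)^I$, not the injectivity you invoke, and injectivity/cancellation is exactly the delicate point; moreover, in mixed characteristic it is not an off-the-shelf statement for a non-semistable special fiber --- its known proofs proceed through a semistable model and weight arguments, i.e.\ essentially the blow-up computation you are trying to bypass. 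As written, the $i=n$ case of (2)$\Rightarrow$(3) therefore remains open in your argument.
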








\begin{proof}
(1)$\Rightarrow$(2):
For the first sentence, use \cite{poonen2020valuation}*{Theorem 1.1}.
For the second, use \cite{scholze2012perfectoid},
or earlier work of Deligne if $\map{char}(K)>0$;
see \cite{kahn2020zeta}*{\S5.6.4} for details.
For the third, use \cite{lindner2020hypersurfaces}*{Theorem~1.2} (which assumes $\map{char}(\FF_q) \ne 2$)
and \cite{wang2023dichotomous}*{Theorem~2.13 and Remark~2.14} (due to Deligne, Katz, Skorobogatov, and Ghorpade--Lachaud).

(2)$\Rightarrow$(3):
If $X$ is smooth, use smooth proper base change.
Now assume (i)--(iii) instead.
We follow the approach suggested by \cite{gortz2004computing}*{p.~155} and \cite{rapoport1990bad}*{p.~267--268}.\footnote{If $n=1$, then \cite{bouw2017computing}*{Proposition~2.8} offers another route
(since $X$ is flat by miracle flatness, and is therefore a regular model of $X_K$ in the sense of \cite{bouw2017computing}*{\S1.4}).}
Let $R^u\Psi\QQ_\ell$ be the $u$th \emph{sheaf of nearby cycles} on $X_{\ol{k}}$, as in \cite{gortz2004computing}*{\S4.1}.
Let $\tr^{ss}(\Fr_q^j, H^i(X_K))$ be the \emph{semi-simple trace} \cite{rapoport1990bad}*{(2.11)} of $\Fr_q^j$ with respect to the monodromy filtration on $H^i(X_K)$.
Let $\pi\maps X'\to X$ be the blow-up of $X$ at $x_0$;
then $X'$ has semi-stable reduction in the sense of \cite{gortz2004computing}*{p.~155}.
Using the weight spectral sequence,
the trivial inertia action of \cite{gortz2004computing}*{Theorem~4.1},
and the Grothendieck--Lefschetz trace formula,
we get
\begin{equation*}
\begin{split}
\sum_{i\ge 0} (-1)^i \tr^{ss}(\Fr_q^j, H^i(X'_K))
&= \sum_{u,v\ge 0} (-1)^{u+v} \tr(\Fr_q^j, H^v(X'_{\ol{k}},R^u\Psi'\QQ_\ell)^I) \\
&= \sum_{u,v\ge 0} (-1)^{u+v} \tr(\Fr_q^j, H^v(X'_{\ol{k}},(R^u\Psi'\QQ_\ell)^I)) \\
&= \sum_{x\in X'(\FF_{q^j})} \tr(\Fr_q^j, (R\Psi'\QQ_\ell)_x^I)
= \sum_{x\in X'(\FF_{q^j})} \tr(\Fr_q^j, (R\Psi'\QQ_\ell)_x),
\end{split}
\end{equation*}
where $\tr(\Fr_q^j, (R\Psi\QQ_\ell)_x^I)$ and $\tr(\Fr_q^j, (R\Psi'\QQ_\ell)_x)$ denote alternating traces as on \cite{gortz2004computing}*{p.~155}.
Here $X'_K = X_K$, so $\tr^{ss}(\Fr_q^j, H(X_K)) = \tr^{ss}(\Fr_q^j, H(X'_K))$.

Let $E\belongs X'$ be the \emph{exceptional divisor} of $\pi$;
then $E\cong \PP^n_k$.
Let $Y\belongs X'$ be the \emph{strict transform} of $X_k$,
i.e.~the closure of $X_k\setminus \set{x_0}$ in $X'$.
Let $Z\defeq Y\cap E$;
then $Z$ is a smooth quadric over $k$ of dimension $n-1$,
so for some $\alpha_Z \in \set{\pm q^{(n-1)/2}}$, we have
$$\card{Z(\FF_{q^j})} = \card{\PP^{n-1}(\FF_{q^j})} + \alpha_Z^j \bm{1}_{2\mid n-1}.$$
The formulas in \cite{gortz2004computing}*{Theorem~4.1}
give $\tr(\Fr_q^j, (R\Psi'\QQ_\ell)_x) = 1 - q^j \bm{1}_{x\in Z}$ for $x\in X'(\FF_{q^j})$, so
\begin{equation*}
\sum_{x\in X'(\FF_{q^j})} \tr(\Fr_q^j, (R\Psi'\QQ_\ell)_x)
= \card{X'(\FF_{q^j})} - q^j \card{Z(\FF_{q^j})}
= \card{X(\FF_{q^j})}-1 + \card{E(\FF_{q^j})} - q^j \card{Z(\FF_{q^j})},
\end{equation*}
since $E\belongs X'_k$ and $X'_k\setminus E = X_k\setminus \set{x_0}$.
All in all, we get
\begin{equation*}
\tr^{ss}(\Fr_q^j, H(X_K)) - \card{X(\FF_{q^j})}
= q^j \card{\PP^{n-1}(\FF_{q^j})} - q^j \card{Z(\FF_{q^j})}
= - q^j \alpha_Z^j \bm{1}_{2\mid n-1}.
\end{equation*}

Next, we pass from the alternating semi-simple trace $\tr^{ss}(\Fr_q^j, H(X_K))$ to the individual $\tr^{ss}(\Fr_q^j, H^i(X_K))$.
By (2), we have $H^i(X_K) \cong H^i(\PP^n_K)$ for $i\ne n$.
Since $H^i(\PP^n_K)$ is unramified,
we conclude that $\tr^{ss}(\Fr_q^j, H^i(X_K)) = \tr(\Fr_q^j, H^i(\PP^n_k))$ for $i\ne n$, whence
\begin{equation}
\label{EQN:first-concrete-semi-simple-trace-formula}
(-1)^n \tr^{ss}(\Fr_q^j, H^n(X_K))
+ \card{\PP^n(\FF_{q^j})} - q^{nj/2} \bm{1}_{2\mid n}
- \card{X(\FF_{q^j})}
= - q^j \alpha_Z^j \bm{1}_{2\mid n-1}.
\end{equation}

By (2) and the Grothendieck--Lefschetz trace formula, we have
\begin{equation*}
(-1)^n(\card{X(\FF_{q^j})} - \card{\PP^n(\FF_{q^j})})
= \tr(\Fr_q^j, H^n(X_k)) - \tr(\Fr_q^j, H^n(\PP^n_k))
= \sum_{\alpha \in M_0} \alpha^j
- \sum_{\alpha \in M_1} \alpha^j.
\end{equation*}
for some unique pair $(M_0,M_1)$ of \emph{disjoint} multisets $M_0$ and $M_1$.
Eigenvalues from different cohomology groups could cancel,
but certainly each $\alpha\in M_0\cup M_1$ has weight $\le n$.
In any case, plugging the previous display into \eqref{EQN:first-concrete-semi-simple-trace-formula} gives
\begin{equation}
\label{EQN:second-concrete-semi-simple-trace-formula}
\tr^{ss}(\Fr_q^j, H^n(X_K))
= q^{nj/2} \bm{1}_{2\mid n} + (q\alpha_Z)^j \bm{1}_{2\mid n-1}
+ \sum_{\alpha \in M_0} \alpha^j
- \sum_{\alpha \in M_1} \alpha^j.
\end{equation}

It remains to pass from $\tr^{ss}(\Fr_q^j, H^n(X_K))$ to $\tr(\Fr_q^j, H^n(X_K)^I)$.
This we do using the purity of $gr^W_a{H^n(X_K)}$ (assumed in (2)),
following the recipe of \cite{rapoport1990bad}*{p.~268}.
Write $\tr^{ss}(\Fr_q^j, H^n(X_K)) = \sum_{\beta\in EV} \beta^j$, where $EV$ is the multiset of eigenvalues of $\Fr_q$ defined on \cite{rapoport1990bad}*{p.~268}.
Then (by the isomorphism $gr^W_a{H^n} \to gr^W_{-a}{H^n}$ recorded on \cite{rapoport1990bad}*{p.~266}), the multiset $EV$ is symmetric about $q^n$,
i.e.~the multisets $\set{\beta}$ and $\set{q^{2n}/\ol{\beta}}$ coincide.

\emph{Case~1: $M_1 = \emptyset$.}
Then by \eqref{EQN:second-concrete-semi-simple-trace-formula},
the multiset $EV$ consists of $M_0$
plus $q^{n/2}$ (resp.~plus $q\alpha_Z$) if $2\mid n$ (resp.~$2\nmid n$).
In the notation of \cite{rapoport1990bad}*{p.~268},
we thus have $EV_{0\min} = EV$ and $EV_1 = \emptyset$ if $2\mid n$,
and if $2\nmid n$ then $EV_{0\min} = \set{\alpha_Z}$, $EV_1 = EV\setminus \set{\alpha_Z,q\alpha_Z} \belongs M_0$, $EV_{1\min} = EV_1$, and $EV_2 = \emptyset$.
Therefore,
\begin{equation*}
\tr(\Fr_q^j, H^n(X_K)^I)
= \sum_{i\ge 0} \sum_{\beta\in EV_{i\min}} \beta^j
= q^{nj/2} \bm{1}_{2\mid n}
+ \sum_{\alpha \in M_0} \alpha^j
= \tr(\Fr_q^j, H^n(X_k)).
\end{equation*}

\emph{Case~2: $M_1 \ne \emptyset$.}
Then \eqref{EQN:second-concrete-semi-simple-trace-formula} forces
$2\mid n$, $M_1 = \set{q^{n/2}}$, and $EV = M_0$, so
\begin{equation*}
\tr(\Fr_q^j, H^n(X_K)^I)
= \sum_{\alpha\in M_0} \alpha^j
= \tr(\Fr_q^j, H^n(X_k)).
\end{equation*}

In both cases, we are now done,
because (2) already trivially implies
$\tr(\Fr_q^j, H^i(X_K)^I)
= \tr(\Fr_q^j, H^i(X_k))$
for $i\ne n$.
\end{proof}






\section*{Acknowledgements}

I thank
Tim Browning,
Adrian Diaconu,
Jakob Glas,
Dan Petersen,
and Peter Sarnak
for their generous comments,
which encouraged me to properly write up this note.
Adrian Diaconu, Dan Petersen, and Will Sawin made helpful suggestions on Betti numbers and other topics.
This work was supported by the European Union's Horizon~2020 research and innovation program under the Marie Sk\l{}odowska-Curie Grant Agreement No.~101034413.

\bibliographystyle{amsxport}
\bibliography{main.bib}

@article {ellenberg2016homological,
    AUTHOR = {Ellenberg, Jordan S. and Venkatesh, Akshay and Westerland,
              Craig},
     TITLE = {Homological stability for {H}urwitz spaces and the
              {C}ohen-{L}enstra conjecture over function fields},
   JOURNAL = {Ann. of Math. (2)},
  FJOURNAL = {Annals of Mathematics. Second Series},
    VOLUME = {183},
      YEAR = {2016},
    NUMBER = {3},
     PAGES = {729--786},
      ISSN = {0003-486X,1939-8980},
   MRCLASS = {14H10 (11G25 11R29 14H30 55R80)},
  MRNUMBER = {3488737},
MRREVIEWER = {Benjamin\ Collas},
       DOI = {10.4007/annals.2016.183.3.1},
       URL = {https://doi.org/10.4007/annals.2016.183.3.1},
}

@article{BGW2024forthcoming,
title={Sums of three cubes over a function field},
author={Browning, Tim and Glas, Jakob and Wang, Victor Y.},
journal = {{\tt arXiv:2402.07146}},
year = {2024}
}

@article{MPPRW,
  title={Uniform twisted homological stability},
  author={Miller, Jeremy and Patzt, Peter and Petersen, Dan and Randal-Williams, Oscar},
  JOURNAL = {{\tt arXiv:2402.00354v2}},
  year={2024},
  date={2024-02-08}
}

@article{bergstrom2023hyperelliptic,
  title={Hyperelliptic curves, the scanning map, and moments of families of quadratic {$L$}-functions},
  author={Bergstr{\"o}m, Jonas and Diaconu, Adrian and Petersen, Dan and Westerland, Craig},
  JOURNAL = {{\tt arXiv:2302.07664}},
  year={2023},
}

@article {sawin2020representation,
    AUTHOR = {Sawin, Will},
     TITLE = {A representation theory approach to integral moments of {$L$}-functions over function fields},
   JOURNAL = {Algebra Number Theory},
  FJOURNAL = {Algebra \& Number Theory},
    VOLUME = {14},
      YEAR = {2020},
    NUMBER = {4},
     PAGES = {867--906},
      ISSN = {1937-0652,1944-7833},
   MRCLASS = {11M50 (11M38 11T55 14F20 20G05)},
  MRNUMBER = {4114059},
MRREVIEWER = {R\'{e}gis\ Blache},
       DOI = {10.2140/ant.2020.14.867},
       URL = {https://doi.org/10.2140/ant.2020.14.867},
}

@article{sawin2022square,
  title={Square-root cancellation for sums of factorization functions over squarefree progressions in {$\mathbb{F}_q[t]$}},
  author={Sawin, Will},
  journal={Acta Math; {\tt arXiv:2102.09730}},
  year={2022}
}

@article {kowalski2022binary,
    AUTHOR = {Kowalski, Emmanuel},
     TITLE = {Binary additive problems for polynomials over finite fields [after {W.} {Sawin} and {M.} {Shusterman}]},
   JOURNAL = {Ast\'{e}risque},
  FJOURNAL = {Ast\'{e}risque},
    NUMBER = {438},
      YEAR = {2022},
     PAGES = {Exp. No. 1193, 453--478},
      ISSN = {0303-1179,2492-5926},
      ISBN = {978-2-85629-968-5},
   MRCLASS = {11P32 (11T55)},
  MRNUMBER = {4576025},
}

@article{meisner2022low,
  title={Low-lying zeros in families of elliptic curve {$L$}-functions over function fields},
  author={Meisner, Patrick and S{\"o}dergren, Anders},
  journal={Finite Fields and Their Applications},
  volume={84},
  pages={102096},
  year={2022},
  publisher={Elsevier}
}

@article {chan2021tropical,
    AUTHOR = {Chan, Melody and Galatius, S{\o{}}ren and Payne, Sam},
     TITLE = {Tropical curves, graph complexes, and top weight cohomology of {$\mathcal{M}_g$}},
   JOURNAL = {J. Amer. Math. Soc.},
  FJOURNAL = {Journal of the American Mathematical Society},
    VOLUME = {34},
      YEAR = {2021},
    NUMBER = {2},
     PAGES = {565--594},
      ISSN = {0894-0347,1088-6834},
   MRCLASS = {14T20 (14H10)},
  MRNUMBER = {4280867},
MRREVIEWER = {Paul\ A.\ Hacking},
       DOI = {10.1090/jams/965},
       URL = {https://doi.org/10.1090/jams/965},
}

@article {lipnowski2018large,
    AUTHOR = {Lipnowski, Michael and Tsimerman, Jacob},
     TITLE = {How large is {$A_g(\mathbb{F}_q)$}?},
   JOURNAL = {Duke Math. J.},
  FJOURNAL = {Duke Mathematical Journal},
    VOLUME = {167},
      YEAR = {2018},
    NUMBER = {18},
     PAGES = {3403--3453},
      ISSN = {0012-7094,1547-7398},
   MRCLASS = {11G10 (11G25)},
  MRNUMBER = {3881200},
MRREVIEWER = {Wanlin\ Li},
       DOI = {10.1215/00127094-2018-0029},
       URL = {https://doi.org/10.1215/00127094-2018-0029},
}

@article {randal2019homology,
    AUTHOR = {Randal-Williams, Oscar},
     TITLE = {Homology of {H}urwitz spaces and the {C}ohen-{L}enstra heuristic for function fields [after {E}llenberg, {V}enkatesh, and {W}esterland]},
   JOURNAL = {Ast\'{e}risque},
  FJOURNAL = {Ast\'{e}risque},
    NUMBER = {422},
      YEAR = {2020},
     PAGES = {Exp. No. 1164, 469--497},
      ISSN = {0303-1179,2492-5926},
      ISBN = {978-2-85629-930-2},
   MRCLASS = {14H10 (11G25 11N45 11R29 14H30 55R80)},
  MRNUMBER = {4224644},
       DOI = {10.24033/ast},
       URL = {https://doi.org/10.24033/ast},
}

@article {rudnick2008traces,
    AUTHOR = {Rudnick, Ze\'{e}v},
     TITLE = {Traces of high powers of the {F}robenius class in the hyperelliptic ensemble},
   JOURNAL = {Acta Arith.},
  FJOURNAL = {Acta Arithmetica},
    VOLUME = {143},
      YEAR = {2010},
    NUMBER = {1},
     PAGES = {81--99},
      ISSN = {0065-1036,1730-6264},
   MRCLASS = {11M50 (11G20)},
  MRNUMBER = {2640060},
MRREVIEWER = {Steven\ Joel\ Miller},
       DOI = {10.4064/aa143-1-5},
       URL = {https://doi.org/10.4064/aa143-1-5},
}

@article{nazarov2021skew,
  title={Skew {H}owe duality and limit shapes of {Y}oung diagrams},
  author={Nazarov, Anton and Postnova, Olga and Scrimshaw, Travis},
  JOURNAL = {J. London Math. Soc.},
  VOLUME = {109},
  YEAR = {2024},
}

@incollection {howe1995perspectives,
    AUTHOR = {Howe, Roger},
     TITLE = {Perspectives on invariant theory: {S}chur duality,
              multiplicity-free actions and beyond},
 BOOKTITLE = {The {S}chur lectures (1992) ({T}el {A}viv)},
    SERIES = {Israel Math. Conf. Proc.},
    VOLUME = {8},
     PAGES = {1--182},
 PUBLISHER = {Bar-Ilan Univ., Ramat Gan},
      YEAR = {1995},
   MRCLASS = {13A50 (15A72 20G05 22E46)},
  MRNUMBER = {1321638},
MRREVIEWER = {Frank\ D.\ Grosshans},
}

@article {adamovich1996tilting,
    AUTHOR = {Adamovich, A. M. and Rybnikov, G. L.},
     TITLE = {Tilting modules for classical groups and {H}owe duality in positive characteristic},
   JOURNAL = {Transform. Groups},
  FJOURNAL = {Transformation Groups},
    VOLUME = {1},
      YEAR = {1996},
    NUMBER = {1-2},
     PAGES = {1--34},
      ISSN = {1083-4362,1531-586X},
   MRCLASS = {20G05 (13A50 15A75)},
  MRNUMBER = {1390747},
MRREVIEWER = {Stephen\ R.\ Doty},
       DOI = {10.1007/BF02587733},
       URL = {https://doi.org/10.1007/BF02587733},
}

@article{helfgott2004behaviour,
  title={On the behaviour of root numbers in families of elliptic curves},
  author={Helfgott, H. A.},
  journal={{\tt arXiv:math/0408141}},
  year={2009}
}

@article {conrad2005root,
    AUTHOR = {Conrad, B. and Conrad, K. and Helfgott, H.},
     TITLE = {Root numbers and ranks in positive characteristic},
   JOURNAL = {Adv. Math.},
  FJOURNAL = {Advances in Mathematics},
    VOLUME = {198},
      YEAR = {2005},
    NUMBER = {2},
     PAGES = {684--731},
      ISSN = {0001-8708,1090-2082},
   MRCLASS = {11G05 (11G40)},
  MRNUMBER = {2183392},
MRREVIEWER = {Mihran\ Papikian},
       DOI = {10.1016/j.aim.2005.06.013},
       URL = {https://doi.org/10.1016/j.aim.2005.06.013},
}

@book {goodman2009symmetry,
    AUTHOR = {Goodman, Roe and Wallach, Nolan R.},
     TITLE = {Symmetry, representations, and invariants},
    SERIES = {Graduate Texts in Mathematics},
    VOLUME = {255},
 PUBLISHER = {Springer, Dordrecht},
      YEAR = {2009},
     PAGES = {xx+716},
      ISBN = {978-0-387-79851-6},
   MRCLASS = {20G05 (14L35 17B10 20C30 20G20 22E46)},
  MRNUMBER = {2522486},
MRREVIEWER = {Vladimir\ V.\ Shchigolev},
       DOI = {10.1007/978-0-387-79852-3},
       URL = {https://doi.org/10.1007/978-0-387-79852-3},
}

@book {goodman2000representations,
    AUTHOR = {Goodman, Roe and Wallach, Nolan R.},
     TITLE = {Representations and invariants of the classical groups},
    SERIES = {Encyclopedia of Mathematics and its Applications},
    VOLUME = {68},
 PUBLISHER = {Cambridge University Press, Cambridge},
      YEAR = {1998},
     PAGES = {xvi+685},
      ISBN = {0-521-58273-3; 0-521-66348-2},
   MRCLASS = {20G05 (14L35 14M17 17B10 20C30 20G20 22E46)},
  MRNUMBER = {1606831},
MRREVIEWER = {James\ E.\ Humphreys},
}

@article {howe2005stable,
    AUTHOR = {Howe, Roger and Tan, Eng-Chye and Willenbring, Jeb F.},
     TITLE = {Stable branching rules for classical symmetric pairs},
   JOURNAL = {Trans. Amer. Math. Soc.},
  FJOURNAL = {Transactions of the American Mathematical Society},
    VOLUME = {357},
      YEAR = {2005},
    NUMBER = {4},
     PAGES = {1601--1626},
      ISSN = {0002-9947,1088-6850},
   MRCLASS = {22E46},
  MRNUMBER = {2115378},
MRREVIEWER = {Stephen\ Slebarski},
       DOI = {10.1090/S0002-9947-04-03722-5},
       URL = {https://doi.org/10.1090/S0002-9947-04-03722-5},
}

@incollection {rapoport1990bad,
    AUTHOR = {Rapoport, M.},
     TITLE = {On the bad reduction of {S}himura varieties},
 BOOKTITLE = {Automorphic forms, {S}himura varieties, and {$L$}-functions,
              {V}ol. {II} ({A}nn {A}rbor, {MI}, 1988)},
    SERIES = {Perspect. Math.},
    VOLUME = {11},
     PAGES = {253--321},
 PUBLISHER = {Academic Press, Boston, MA},
      YEAR = {1990},
      ISBN = {0-12-176652-7},
   MRCLASS = {11G18 (11F70 11G40 14G10)},
  MRNUMBER = {1044832},
MRREVIEWER = {James\ Milne},
}

@article {gortz2004computing,
    AUTHOR = {G\"{o}rtz, Ulrich},
     TITLE = {Computing the alternating trace of {F}robenius on the sheaves of nearby cycles on local models for {$\rm GL_4$} and {$\rm GL_5$}},
   JOURNAL = {J. Algebra},
  FJOURNAL = {Journal of Algebra},
    VOLUME = {278},
      YEAR = {2004},
    NUMBER = {1},
     PAGES = {148--172},
      ISSN = {0021-8693,1090-266X},
   MRCLASS = {14G35 (11G18 20C08)},
  MRNUMBER = {2068071},
MRREVIEWER = {Torsten\ Wedhorn},
       DOI = {10.1016/j.jalgebra.2003.07.002},
       URL = {https://doi.org/10.1016/j.jalgebra.2003.07.002},
}

@article {bouw2017computing,
    AUTHOR = {Bouw, Irene I. and Wewers, Stefan},
     TITLE = {Computing {$L$}-functions and semistable reduction of superelliptic curves},
   JOURNAL = {Glasg. Math. J.},
  FJOURNAL = {Glasgow Mathematical Journal},
    VOLUME = {59},
      YEAR = {2017},
    NUMBER = {1},
     PAGES = {77--108},
      ISSN = {0017-0895,1469-509X},
   MRCLASS = {11G40 (11G20 14G10 14H50)},
  MRNUMBER = {3576328},
MRREVIEWER = {Remke\ Kloosterman},
       DOI = {10.1017/S0017089516000057},
       URL = {https://doi.org/10.1017/S0017089516000057},
}

@book {fulton2013representation,
    AUTHOR = {Fulton, William and Harris, Joe},
     TITLE = {Representation theory},
    SERIES = {Graduate Texts in Mathematics},
    VOLUME = {129},
      NOTE = {A first course,
              Readings in Mathematics},
 PUBLISHER = {Springer-Verlag, New York},
      YEAR = {1991},
     PAGES = {xvi+551},
      ISBN = {0-387-97527-6; 0-387-97495-4},
   MRCLASS = {20G05 (17B10 20G20 22E46)},
  MRNUMBER = {1153249},
MRREVIEWER = {James\ E.\ Humphreys},
       DOI = {10.1007/978-1-4612-0979-9},
       URL = {https://doi.org/10.1007/978-1-4612-0979-9},
}

@incollection {sundaram1990tableaux,
    AUTHOR = {Sundaram, Sheila},
     TITLE = {Tableaux in the representation theory of the classical {L}ie
              groups},
 BOOKTITLE = {Invariant theory and tableaux ({M}inneapolis, {MN}, 1988)},
    SERIES = {IMA Vol. Math. Appl.},
    VOLUME = {19},
     PAGES = {191--225},
 PUBLISHER = {Springer, New York},
      YEAR = {1990},
      ISBN = {0-387-97170-X},
   MRCLASS = {22E46 (05E10 17B10)},
  MRNUMBER = {1035496},
MRREVIEWER = {Ronald\ C.\ King},
}

@article {krattenthaler1998identities,
    AUTHOR = {Krattenthaler, C.},
     TITLE = {Identities for classical group characters of nearly rectangular shape},
   JOURNAL = {J. Algebra},
  FJOURNAL = {Journal of Algebra},
    VOLUME = {209},
      YEAR = {1998},
    NUMBER = {1},
     PAGES = {1--64},
      ISSN = {0021-8693,1090-266X},
   MRCLASS = {05E15 (05E05 20G05)},
  MRNUMBER = {1652177},
MRREVIEWER = {Ang\`ele\ M.\ Hamel},
       DOI = {10.1006/jabr.1998.7531},
       URL = {https://doi.org/10.1006/jabr.1998.7531},
}

@article {andrade2014conjectures,
    AUTHOR = {Andrade, J. C. and Keating, J. P.},
     TITLE = {Conjectures for the integral moments and ratios of
              {$L$}-functions over function fields},
   JOURNAL = {J. Number Theory},
  FJOURNAL = {Journal of Number Theory},
    VOLUME = {142},
      YEAR = {2014},
     PAGES = {102--148},
      ISSN = {0022-314X,1096-1658},
   MRCLASS = {11G20 (11M50 14G10)},
  MRNUMBER = {3208396},
MRREVIEWER = {Steven\ Joel\ Miller and Sam\ Donow},
       DOI = {10.1016/j.jnt.2014.02.019},
       URL = {https://doi.org/10.1016/j.jnt.2014.02.019},
}

@article {bump2006averages,
    AUTHOR = {Bump, Daniel and Gamburd, Alex},
     TITLE = {On the averages of characteristic polynomials from classical
              groups},
   JOURNAL = {Comm. Math. Phys.},
  FJOURNAL = {Communications in Mathematical Physics},
    VOLUME = {265},
      YEAR = {2006},
    NUMBER = {1},
     PAGES = {227--274},
      ISSN = {0010-3616,1432-0916},
   MRCLASS = {60B15 (05E10 15A52 20G05 81R05)},
  MRNUMBER = {2217304},
MRREVIEWER = {Uwe\ Franz},
       DOI = {10.1007/s00220-006-1503-1},
       URL = {https://doi.org/10.1007/s00220-006-1503-1},
}

@book {kahn2020zeta,
    AUTHOR = {Kahn, Bruno},
     TITLE = {Zeta and {$L$}-functions of varieties and motives},
    SERIES = {London Mathematical Society Lecture Note Series},
    VOLUME = {462},
      NOTE = {Translated from the 2018 French original [3839285]},
 PUBLISHER = {Cambridge University Press, Cambridge},
      YEAR = {2020},
     PAGES = {vii+207},
      ISBN = {978-1-108-70339-0},
   MRCLASS = {11G40 (11-02 14C15 14G10)},
  MRNUMBER = {4382436},
       DOI = {10.1017/9781108691536},
       URL = {https://doi.org/10.1017/9781108691536},
}

@book {katz1999random,
    AUTHOR = {Katz, Nicholas M. and Sarnak, Peter},
     TITLE = {Random matrices, {F}robenius eigenvalues, and monodromy},
    SERIES = {American Mathematical Society Colloquium Publications},
    VOLUME = {45},
 PUBLISHER = {American Mathematical Society, Providence, RI},
      YEAR = {1999},
     PAGES = {xii+419},
      ISBN = {0-8218-1017-0},
   MRCLASS = {11G25 (11M06 11Y35 14D05 14G10 60F99 82B44)},
  MRNUMBER = {1659828},
MRREVIEWER = {Philippe\ G.\ Michel},
       DOI = {10.1090/coll/045},
       URL = {https://doi.org/10.1090/coll/045},
}

@article {katz1999zeroes,
    AUTHOR = {Katz, Nicholas M. and Sarnak, Peter},
     TITLE = {Zeroes of zeta functions and symmetry},
   JOURNAL = {Bull. Amer. Math. Soc. (N.S.)},
  FJOURNAL = {American Mathematical Society. Bulletin. New Series},
    VOLUME = {36},
      YEAR = {1999},
    NUMBER = {1},
     PAGES = {1--26},
      ISSN = {0273-0979,1088-9485},
   MRCLASS = {11M41 (11F66 11G40)},
  MRNUMBER = {1640151},
MRREVIEWER = {Daniel\ A.\ Goldston},
       DOI = {10.1090/S0273-0979-99-00766-1},
       URL = {https://doi.org/10.1090/S0273-0979-99-00766-1},
}

@article {soundararajan2009moments,
    AUTHOR = {Soundararajan, Kannan},
     TITLE = {Moments of the {R}iemann zeta function},
   JOURNAL = {Ann. of Math. (2)},
  FJOURNAL = {Annals of Mathematics. Second Series},
    VOLUME = {170},
      YEAR = {2009},
    NUMBER = {2},
     PAGES = {981--993},
      ISSN = {0003-486X},
   MRCLASS = {11M06 (11M50)},
  MRNUMBER = {2552116},
MRREVIEWER = {Steven Joel Miller},
       DOI = {10.4007/annals.2009.170.981},
       URL = {https://doi.org/10.4007/annals.2009.170.981},
}

@article {scholze2012perfectoid,
    AUTHOR = {Scholze, Peter},
     TITLE = {Perfectoid spaces},
   JOURNAL = {Publ. Math. Inst. Hautes \'{E}tudes Sci.},
  FJOURNAL = {Publications Math\'{e}matiques. Institut de Hautes \'{E}tudes
              Scientifiques},
    VOLUME = {116},
      YEAR = {2012},
     PAGES = {245--313},
      ISSN = {0073-8301},
   MRCLASS = {14G99},
  MRNUMBER = {3090258},
MRREVIEWER = {Jean-Marc Fontaine},
       DOI = {10.1007/s10240-012-0042-x},
       URL = {https://doi.org/10.1007/s10240-012-0042-x},
}

@article{bui2023negative,
  title={Negative moments of the {R}iemann zeta-function},
  author={Bui, H. M. and Florea, A.},
  JOURNAL = {Crelle; {\tt arXiv:2302.07226}},
  year={2023}
}

@article{florea2021negative,
  title={Negative moments of {$L$}-functions with small shifts over function fields},
  author={Florea, A.},
  JOURNAL = {IMRN; {\tt arXiv:2111.10477}},
  year={2022}
}

@article{bui2021ratios,
    AUTHOR = {Bui, H. M. and Florea, A. and Keating, J. P.},
     TITLE = {The {R}atios {C}onjecture and upper bounds for negative moments of {$L$}-functions over function fields},
   JOURNAL = {Trans. Amer. Math. Soc.},
  FJOURNAL = {Transactions of the American Mathematical Society},
    VOLUME = {376},
      YEAR = {2023},
    NUMBER = {6},
     PAGES = {4453--4510},
      ISSN = {0002-9947,1088-6850},
   MRCLASS = {11R59 (11M06 11M50)},
  MRNUMBER = {4586817},
       DOI = {10.1090/tran/8907},
       URL = {https://doi.org/10.1090/tran/8907},
}

@article{browning2015rational,
	AUTHOR = {Browning, T. D. and Vishe, P.},
	TITLE = {Rational points on cubic hypersurfaces over {$\mathbb{F}_q(t)$}},
	JOURNAL = {Geom. Funct. Anal.},
	FJOURNAL = {Geometric and Functional Analysis},
	VOLUME = {25},
	YEAR = {2015},
	NUMBER = {3},
	PAGES = {671--732},
	ISSN = {1016-443X},
	MRCLASS = {11G35 (11D25 11E76 11M38 11P55 11T55 14G05 14G25)},
	MRNUMBER = {3361770},
	MRREVIEWER = {D. R. Heath-Brown},
	DOI = {10.1007/s00039-015-0328-5},
	URL = {https://doi.org/10.1007/s00039-015-0328-5}
}

@article{conrey2005integral,
	AUTHOR = {Conrey, J. B. and Farmer, D. W. and Keating, J. P. and Rubinstein, M. O. and Snaith, N. C.},
	TITLE = {Integral moments of {$L$}-functions},
	JOURNAL = {Proc. London Math. Soc. (3)},
	FJOURNAL = {Proceedings of the London Mathematical Society. Third Series},
	VOLUME = {91},
	YEAR = {2005},
	NUMBER = {1},
	PAGES = {33--104},
	ISSN = {0024-6115},
	MRCLASS = {11M26},
	MRNUMBER = {2149530},
	MRREVIEWER = {K. Soundararajan},
	DOI = {10.1112/S0024611504015175},
	URL = {https://doi.org/10.1112/S0024611504015175}
}

@article{conrey2007applications,
	AUTHOR = {Conrey, J. B. and Snaith, N. C.},
	TITLE = {Applications of the {$L$}-functions ratios conjectures},
	JOURNAL = {Proc. Lond. Math. Soc. (3)},
	FJOURNAL = {Proceedings of the London Mathematical Society. Third Series},
	VOLUME = {94},
	YEAR = {2007},
	NUMBER = {3},
	PAGES = {594--646},
	ISSN = {0024-6115},
	MRCLASS = {11M26 (11M06 15A52)},
	MRNUMBER = {2325314},
	MRREVIEWER = {S. W. Graham},
	DOI = {10.1112/plms/pdl021},
	URL = {https://doi.org/10.1112/plms/pdl021}
}

@article{conrey2008autocorrelation,
	AUTHOR = {Conrey, Brian and Farmer, D. W. and Zirnbauer, M. R.},
	TITLE = {Autocorrelation of ratios of {$L$}-functions},
	JOURNAL = {Commun. Number Theory Phys.},
	FJOURNAL = {Communications in Number Theory and Physics},
	VOLUME = {2},
	YEAR = {2008},
	NUMBER = {3},
	PAGES = {593--636},
	ISSN = {1931-4523},
	MRCLASS = {11M26},
	MRNUMBER = {2482944},
	MRREVIEWER = {Steven Joel Miller},
	DOI = {10.4310/CNTP.2008.v2.n3.a4},
	URL = {https://doi.org/10.4310/CNTP.2008.v2.n3.a4}
}

@article{harper2013sharp,
	title = {Sharp conditional bounds for moments of the {R}iemann zeta function},
	author = {Harper, Adam J.},
	JOURNAL = {{\tt arXiv:1305.4618}},
	year = {2013}
}

@article{kisin1999local,
	AUTHOR = {Kisin, Mark},
	TITLE = {Local constancy in {$p$}-adic families of {G}alois representations},
	JOURNAL = {Math. Z.},
	FJOURNAL = {Mathematische Zeitschrift},
	VOLUME = {230},
	YEAR = {1999},
	NUMBER = {3},
	PAGES = {569--593},
	ISSN = {0025-5874},
	MRCLASS = {14G22 (14F20)},
	MRNUMBER = {1680032},
	MRREVIEWER = {Frank Herrlich},
	DOI = {10.1007/PL00004706},
	URL = {https://doi.org/10.1007/PL00004706}
}

@article{lindner2020hypersurfaces,
	AUTHOR = {Lindner, Niels},
	TITLE = {Hypersurfaces with defect},
	JOURNAL = {J. Algebra},
	FJOURNAL = {Journal of Algebra},
	VOLUME = {555},
	YEAR = {2020},
	PAGES = {1--35},
	ISSN = {0021-8693},
	MRCLASS = {14J70 (14B05 14G17)},
	MRNUMBER = {4081491},
	DOI = {10.1016/j.jalgebra.2020.02.022},
	URL = {https://doi.org/10.1016/j.jalgebra.2020.02.022}
}

@article{poonen2003squarefree,
	AUTHOR = {Poonen, Bjorn},
	TITLE = {Squarefree values of multivariable polynomials},
	JOURNAL = {Duke Math. J.},
	FJOURNAL = {Duke Mathematical Journal},
	VOLUME = {118},
	YEAR = {2003},
	NUMBER = {2},
	PAGES = {353--373},
	ISSN = {0012-7094},
	MRCLASS = {11N32 (11D75)},
	MRNUMBER = {1980998},
	MRREVIEWER = {G. Greaves},
	DOI = {10.1215/S0012-7094-03-11826-8},
	URL = {https://doi.org/10.1215/S0012-7094-03-11826-8}
}

@article{poonen2020valuation,
	title = {The valuation of the discriminant of a hypersurface},
	author = {Poonen, Bjorn and Stoll, Michael},
	journal = {Preprint},
	eprint = {https://math.mit.edu/~poonen/papers/discriminant.pdf},
	year = {2020},
	date = {2020-08-26}
}

@incollection{sarnak2016families,
	AUTHOR = {Sarnak, Peter and Shin, Sug Woo and Templier, Nicolas},
	TITLE = {Families of {$L$}-functions and their symmetry},
	BOOKTITLE = {Families of automorphic forms and the trace formula},
	SERIES = {Simons Symp.},
	PAGES = {531--578},
	PUBLISHER = {Springer, [Cham]},
	YEAR = {2016},
	MRCLASS = {11F70 (14G10)},
	MRNUMBER = {3675175},
	MRREVIEWER = {Neven Grbac}
}

@article{wang2023dichotomous,
    AUTHOR = {Wang, Victor Y.},
     TITLE = {Dichotomous point counts over finite fields},
   JOURNAL = {J. Number Theory},
  FJOURNAL = {Journal of Number Theory},
    VOLUME = {250},
      YEAR = {2023},
     PAGES = {1--34},
      ISSN = {0022-314X,1096-1658},
   MRCLASS = {11G25 (14B05 14G15 14M10 14N25)},
  MRNUMBER = {4580450},
       DOI = {10.1016/j.jnt.2023.03.001},
       URL = {https://doi.org/10.1016/j.jnt.2023.03.001},
}
\end{document}